\DeclareMathSymbol{\minus}{\mathbin}{AMSa}{"39}
\newcommand{\cancel}[1]{}
\newcommand{\field}{\Bbbk}
\newcommand{\G}{\mathcal G}
\newcommand{\F}{\mathcal F}
\newcommand{\Scal}{\mathcal S}
\newcommand{\Ccal}{\mathcal C}
\def\rsf{\mathsf{r}}
\def\rrsf{\mathsf{rr}}
\DeclareMathOperator{\im}{im}
\DeclareMathOperator{\coker}{coker}
\DeclareMathOperator{\Hom}{Hom}
\DeclareMathOperator{\proj}{Proj}
\title{Computing Projective Implicit Representations from Poset Towers}
\titlerunning{Computing PiReps from Poset Towers}
\author{Tamal K. Dey}{Department of Computer Science, Purdue University, West Lafayette, USA}{tamaldey@purdue.edu}{https://orcid.org/0000-0001-5160-9738}{Partially supported by NSF grants CCF-2437030 and DMS-2301360}
\author{Florian Russold}{Institute of Geometry, Graz University of Technology, Graz, Austria}{russold@tugraz.at}{https://orcid.org/0009-0003-2978-0477}{Partially supported by Austrian Science Fund (FWF) grants P 33765-N and W1230}
\authorrunning{T. Dey, F. Russold}
\keywords{Posets, Towers, Chain complexes, Persistence, Presentations, Resolutions}
\begin{document}

\maketitle

\begin{abstract}
A family of simplicial complexes, connected by simplicial maps and
indexed by a finite poset $P$, is called a \emph{poset tower}. The concept
of poset towers subsumes classical objects of study in the persistence
literature, such as multi-parameter filtrations, zigzag filtrations, and one-parameter
simplicial towers, while also allowing arbitrary finite posets and arbitrary
simplicial maps. The homology of a poset tower gives rise to a
$P$-persistence module. To compute this homology globally over $P$, in the
spirit of the persistence algorithm, we consider the homology of the chain
complex segment of $P$-persistence modules,
$C_{\ell\minus 1}\xleftarrow{\partial_{\ell}}C_\ell
\xleftarrow{\partial_{\ell+1}}C_{\ell+1}$, induced by the simplices of the
poset tower. Contrary to the case of one-critical multi-filtrations, the
chain modules $C_\ell$ of a poset tower are not necessarily projective and can have a complicated structure. In this work, we address
the problem of computing a replacement of such a chain complex segment by
projective modules and $P$-graded matrices, resulting in what we call a
projective implicit representation (PiRep), which preserves the homology.
Such a representation plays the role of the graded boundary-matrix
representation in the classical persistence algorithm: it converts the
simplicial data of a poset tower into algebraic input on which persistent
homology can be computed globally over $P$. In particular, a PiRep can be
used as input to existing algorithms for computing minimal presentations of
persistent homology. We give an efficient algorithm to compute a PiRep from a poset tower. Our
algorithm constructs degreewise minimal presentations and asymptotically
minimal second terms of projective resolutions of the chain modules
$C_\ell$, lifts the boundary maps $\partial_\ell$ to these resolutions, and
assembles this data into a PiRep using an additional correction term. The
method is tailored to the chain complexes induced by poset towers and
computes the required algebraic data combinatorially by exploiting
their special structure, avoiding general-purpose algebraic reduction. In
the context of poset towers, it is fully general and can serve as a
foundation for developing efficient algorithms on specific posets.    
\end{abstract}

\section{Introduction} \label{sec:introduction}

Persistent homology is a tool to quantify topological information of a space. After its introduction in \cite{edelsbrunner2002topological}, persistent homology has become a broad and very active field of research, with numerous applications in data science \cite{dey2022computational,edelsbrunner2010computational,DONUT}. Persistent homology tracks topological features of a space evolving over some indexing poset $P$. The spaces most commonly considered in applications are simplicial complexes. We call a family of simplicial complexes and simplicial maps, subject to commutativity relations, indexed by a finite poset $P$, a \emph{poset tower}. A poset tower can be succinctly described as a functor $K\colon P\rightarrow \mathbf{SCpx}$ from the indexing poset $P$ to the category of simplicial complexes $\mathbf{SCpx}$. Figure \ref{fig:indexing_posets} shows examples of common indexing posets and Figures \ref{fig:persistence_computation_filt}-\ref{fig:two_join_poset_indec} show examples of poset towers. The $\ell$th homology of a poset tower, $H_\ell(K)\coloneqq H_\ell\circ K$, gives rise to a $P$-persistence module, i.e., a functor $P\rightarrow \mathbf{Vec}$ from $P$ to the category of vector spaces. It can be defined as the homology of a chain complex segment $C_{\ell+1}(K)\xrightarrow{\partial_{\ell+1}}C_\ell(K)\xrightarrow{\partial_\ell}C_{\ell\minus 1}(K)$ of $P$-persistence modules. But for a general poset tower $K$, the persistence modules $C_\ell(K)\coloneqq C_\ell\circ K\colon P\rightarrow\mathbf{Vec}$, describing the evolution of simplicial $\ell$-chains in $K$, can have a complicated structure, and the boundary maps $\partial_\ell$ cannot be globally represented by matrices. The goal of this work is to compute a chain complex segment $D_{\ell+1}\xrightarrow{d_{\ell+1}}D_{\ell}\xrightarrow{d_\ell}D_{\ell\minus 1}$ such that each $D_\ell$ is a \emph{projective module}, the maps $d_\ell$ are represented by $P$-graded matrices, and $H_\ell(K)\cong\ker(d_\ell) /\im(d_{\ell+1})$. We call such a representation a \emph{projective implicit representation}, PiRep for short, of $H_\ell(K)$. We adopt this notion from the \emph{free implicit representations} (FiRep) used in multi-parameter persistence \cite{Kerber2020FastMP}, but use projective modules because we work in the setting of finite indexing posets. Given a PiRep of $H_\ell(K)$, we can compute a minimal presentation of the homology module $H_\ell(K)$ using algorithms discussed in \cite{brown2024discretemicrolocalmorsetheory}.

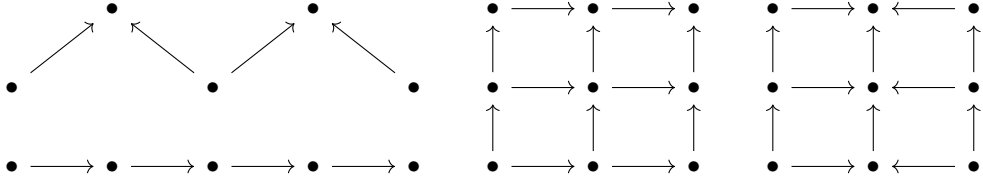
\begin{figure}[h]
\centering
\begin{tikzcd}
& \bullet & & \bullet & &[-8pt] \bullet \arrow[r] & \bullet \arrow[r] & \bullet &[-8pt] \bullet \arrow[r] & \bullet & \bullet \arrow[l]  \\
\bullet \arrow[ur] & & \bullet \arrow[ur] \arrow[ul] & & \bullet \arrow[ul] & \bullet \arrow[r] \arrow[u] & \bullet \arrow[r] \arrow[u] & \bullet \arrow[u] & \bullet \arrow[r] \arrow[u] & \bullet \arrow[u] & \bullet \arrow[u] \arrow[l]  \\
\bullet \arrow[r] & \bullet \arrow[r] & \bullet \arrow[r] & \bullet \arrow[r] & \bullet & \bullet \arrow[r] \arrow[u] & \bullet \arrow[r] \arrow[u] & \bullet \arrow[u] & \bullet \arrow[r] \arrow[u] & \bullet \arrow[u] & \bullet \arrow[u] \arrow[l] 
\end{tikzcd}    
\caption{Common indexing posets: (lower left) one-parameter persistence, (upper left) zigzag persistence, (center) two-parameter persistence, (right) two-parameter zigzag persistence.}
\label{fig:indexing_posets}
\end{figure}

\begin{figure}[h]
    \centering
    \includegraphics[width=1\linewidth,trim={5pt 5pt 5pt 5pt},clip]{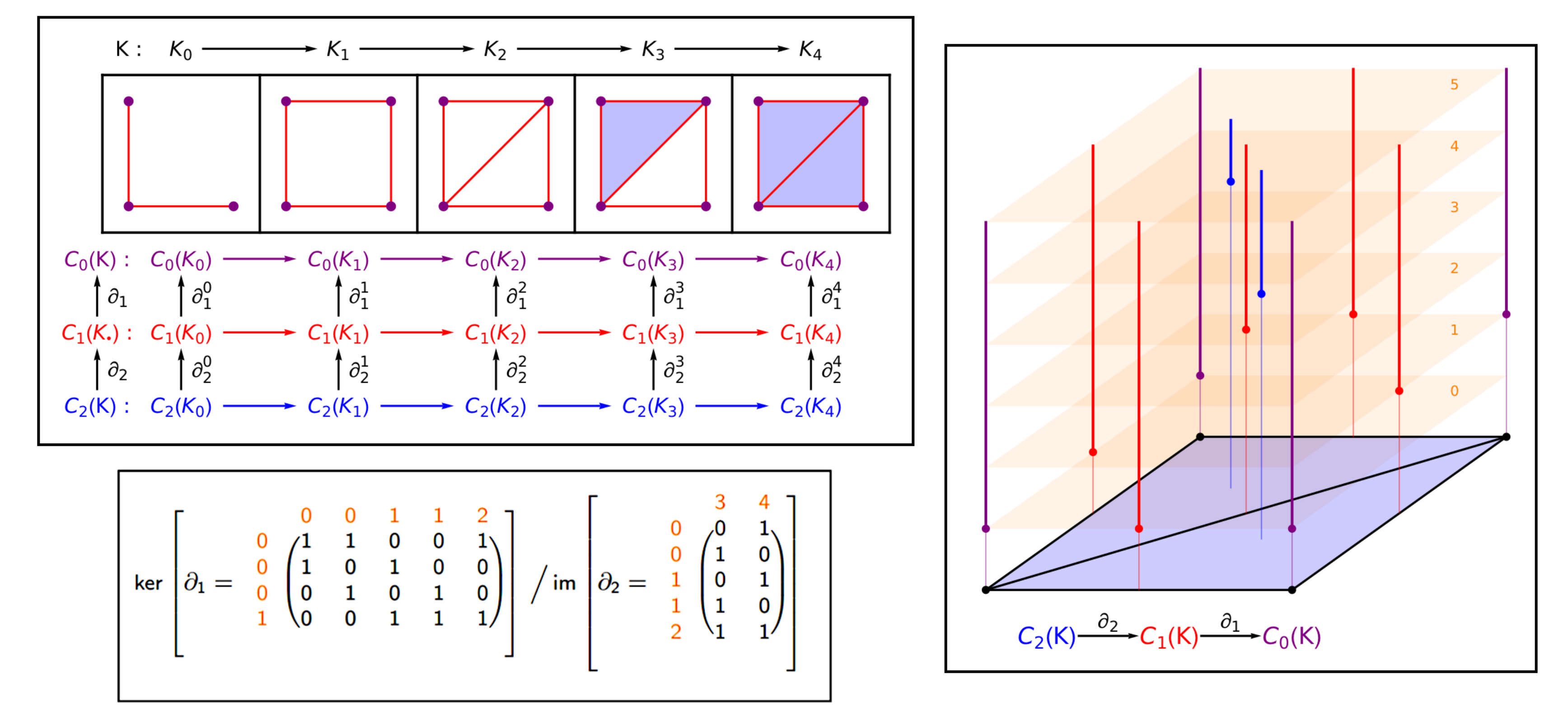}
    \caption{One-parameter persistent homology pipeline. Upper left: a filtered simplicial complex together with its algebraic representation by the simplicial chain complex. Right: The simplicial chain spaces as projective interval modules, represented by colored vertical lines, corresponding to the simplices in the filtration. Lower left: The representation of the boundary maps by graded matrices. The orange numbers on the right and in the lower left represent the grades in the filtration.}
    \label{fig:persistence_computation_filt}
\end{figure}
\textbf{From filtered simplicial complexes to poset towers.} Originally, persistent homology was introduced for filtered simplicial complexes (Figure \ref{fig:persistence_computation_filt} upper left), which are described by functors $K\colon P\rightarrow \mathbf{SCpx}$, where $P=\{1,\ldots,n\}$ with the standard order, and $K(x\leq y)$ is an inclusion for all $x\leq y\in P$. A cornerstone of the field of persistent homology is the so-called persistence algorithm. It efficiently computes an invariant, called the barcode, completely capturing the persistent homology of a filtered simplicial complex \cite{edelsbrunner2002topological,zomorodian2004computing}. It is based on the fact that each chain module $C_\ell(K)$ (Figure \ref{fig:persistence_computation_filt} upper left) is projective and can be described as a sum of upset interval modules\footnote{An interval persistence module is one-dimensional (point-wise) on a
convex connected subposet of the indexing poset $P$ and zero elsewhere,
with identity maps between comparable elements in the support.}, corresponding to the $\ell$-simplices that start at the point where the simplex appears first in the filtration (Figure \ref{fig:persistence_computation_filt} right). The significance of $C_\ell(K)$ being projective is that, in this case, we can represent the boundary maps $\partial_\ell$ by graded matrices (Figure \ref{fig:persistence_computation_filt} lower left). This matrix representation can be read off immediately from the input and allows us to compute the persistent homology as $\ker(\partial_\ell) /\im(\partial_{\ell+1})$ via matrix operations on these graded matrices.  

\begin{figure}[h]
    \centering
    \includegraphics[width=1\linewidth]{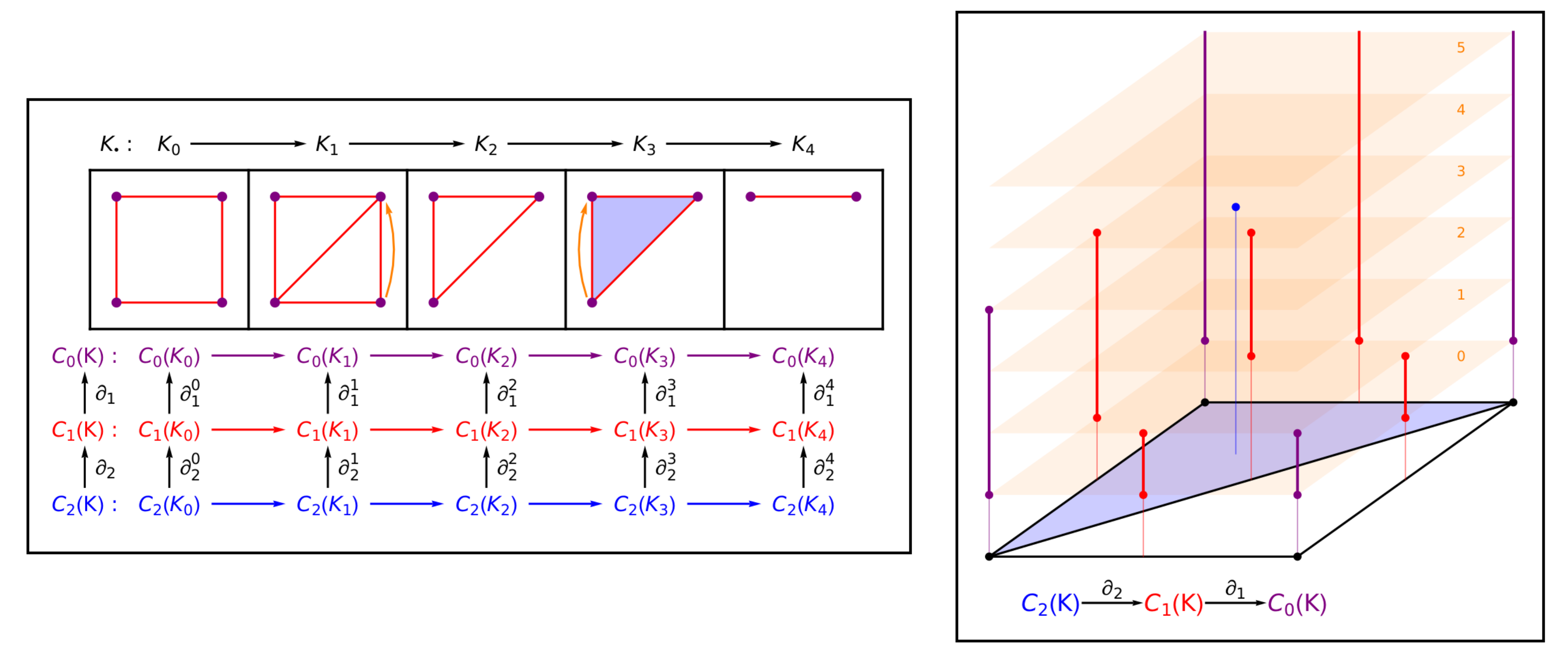}
    \caption{Left: a simplicial tower with its algebraic representation by a simplicial chain complex. Right: The chain spaces represented by interval modules corresponding to the simplices. Contrary to the case of filtrations, an interval can end if the corresponding simplex dies in the tower.}
    \label{fig:persistence_computation_tower}
\end{figure}

Since its introduction, the classical one-parameter persistent homology of filtered spaces has been generalized to various other settings and indexing posets (see Figure \ref{fig:indexing_posets}). Simplicial towers \cite{dey2014computing} generalize filtrations by allowing the maps $K(x\leq y)$ to be arbitrary simplicial maps (see Figure \ref{fig:persistence_computation_tower}). In this situation, $C_\ell(K)$ is also a sum of interval modules, where the intervals correspond to the simplices in the tower (Figure \ref{fig:persistence_computation_tower} right). The difference from the case of filtrations is that intervals can end if a simplex gets collapsed. In other words, the chain modules $C_\ell(K)$ can have relations (killing simplices) and are no longer projective. To compute the persistent homology via graded matrices, we first have to construct such a matrix representation from $K$. This can be achieved by either converting it to a filtration via geometric methods \cite{dey2014computing,kerber2019barcodes}, or via algebraic methods \cite{dey_et_al:LIPIcs.SoCG.2024.51}. We can then compute the barcode using the standard persistence algorithm.
Similar statements hold for zigzag persistence \cite{carlsson2010zigzag,dey_et_al:LIPIcs.ESA.2022.43}, which generalizes the indexing poset by allowing the arrows to point in both directions.

\begin{figure}[h]
    \centering
    \includegraphics[width=0.8\linewidth,trim={2pt 2pt 2pt 2pt},clip]{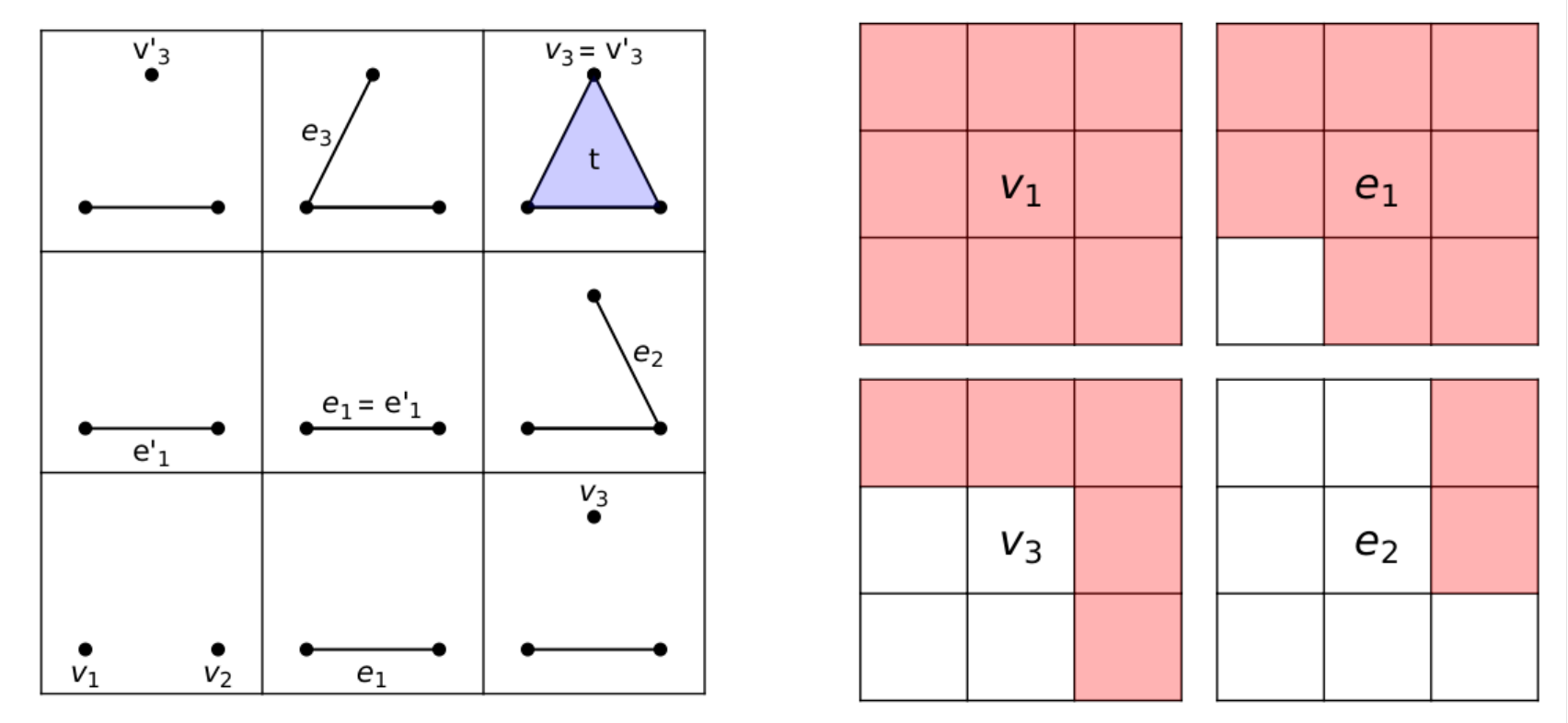}
    \caption{Left: a bifiltered simplicial complex with simplex labels shown at their generating grades. Here $v_3$ and $v'_3$ represent two copies of the same vertex that get identified in the upper right. Right: The interval modules corresponding to some of the simplices.}
    \label{fig:grid_filtration_indec}
\end{figure}

In multi-parameter persistence \cite{DBLP:conf/compgeom/BauerLL23,BL23,carlsson2007theory,miller2020}, we increase the dimension of the indexing poset to obtain $d$-dimensional grid-posets. For $d>1$, the persistent homology can no longer be described by a complete discrete invariant, like a barcode. In this case, a minimal presentation of $H_\ell(K)$ has been proposed as an alternative in the topological data analysis community~\cite{lensickwright}. For two parameters, efficient algorithms for computing this minimal presentation have been proposed
in~\cite{FK19,Kerber2020FastMP,lensickwright}. Beyond two parameters, 
an algorithm has been suggested in~\cite{multi_pres}, though full details are missing at the moment. 

These algorithms require, as input, a free/projective implicit representation of the chain complex segment $C_{\ell+1}(K)\xrightarrow{\partial_{\ell+1}}C_\ell(K)\xrightarrow{\partial_\ell}C_{\ell\minus 1}(K)$.
It is conceivable that understanding structural and computational aspects of the chain modules
$C_\ell(K)$ may become useful for computing minimal presentations of $H_\ell(K)$ more efficiently.
The chain modules $C_\ell(K)$ of multi-parameter filtrations are still decomposable into interval modules, with intervals representing the simplices (see Figure \ref{fig:grid_filtration_indec}). But simplices can be generated at different incomparable grades of the filtration and get identified at their joins. Hence, contrary to the case of one-parameter filtrations, $C_\ell(K)$ can have relations and is not necessarily projective. To obtain a valid input for the algorithm of \cite{lensickwright}, we first have to convert the simplicial input into a free/projective implicit representation by graded matrices. In \cite{chacholski2017combinatorial}, a method to convert a general simplicial filtration over a $d$-dimensional grid to a FiRep is discussed. If we consider arbitrary simplicial maps over a grid poset, as in the example in Figure \ref{fig:collapse_grid_indec}, the chain modules $C_\ell(K)$ will not even be interval decomposable anymore. Different simplices can be glued together by collapses, yielding more complicated indecomposables.  

\begin{figure}[h]
    \centering
    \includegraphics[width=0.85\linewidth,trim={2pt 2pt 2pt 2pt},clip]{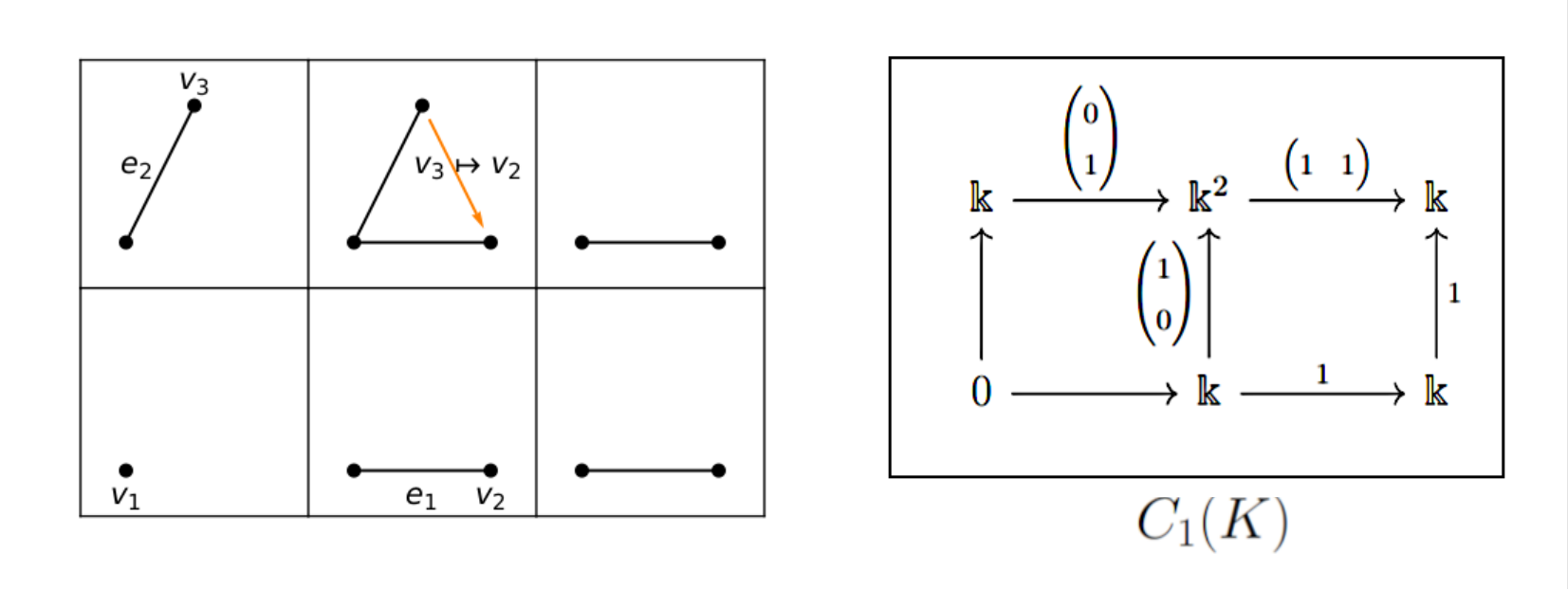}
    \caption{Left: a poset tower over a $2\times 3$-grid. The orange arrow denotes the collapse of the vertex $v_3$ into the vertex $v_2$ when moving to the right. Right: The indecomposable persistence module representing $C_1(K)$.}
    \label{fig:collapse_grid_indec}
\end{figure}

What about going beyond $d$-parameter filtrations? That leads us to poset towers. In topological data analysis (TDA), these general-purpose
frameworks have already started to appear. 
In~\cite{DS25,flammer}, zigzag-grids, that is, grids with arrows pointing in both directions along one axis (which gives quasi zigzag persistent homology as described in~\cite{DS25}), are considered. The authors in~\cite{DS25} argue that the resulting quasi zigzag persistence module
is useful to study time-varying data.
Also, persistent homology over general posets has become a subject of study \cite{kim2023persistence} in TDA. If we consider arbitrary indexing posets, the chain modules $C_\ell(K)$ will not even be interval decomposable in the case of inclusions. The example in Figure \ref{fig:two_join_poset_indec} shows a poset tower over a poset that does not have a maximum. In this case, simplices can be identified in some branches and can be different in other branches. In other words, we can no longer speak about ``the simplices'' in absolute terms, because two simplices can be the same from the perspective of one maximal element but can be different from the perspective of another maximal element. Therefore, except for the special case of one-parameter filtrations, the chain modules $C_\ell(K)$ are, in general,  not projective and can have a complicated structure. In this work, we tackle the problem of computing a projective implicit representation of the persistent homology of a general poset tower.  

\begin{figure}[h]
    \centering
    \includegraphics[width=0.95\linewidth,trim={2pt 2pt 2pt 2pt},clip]{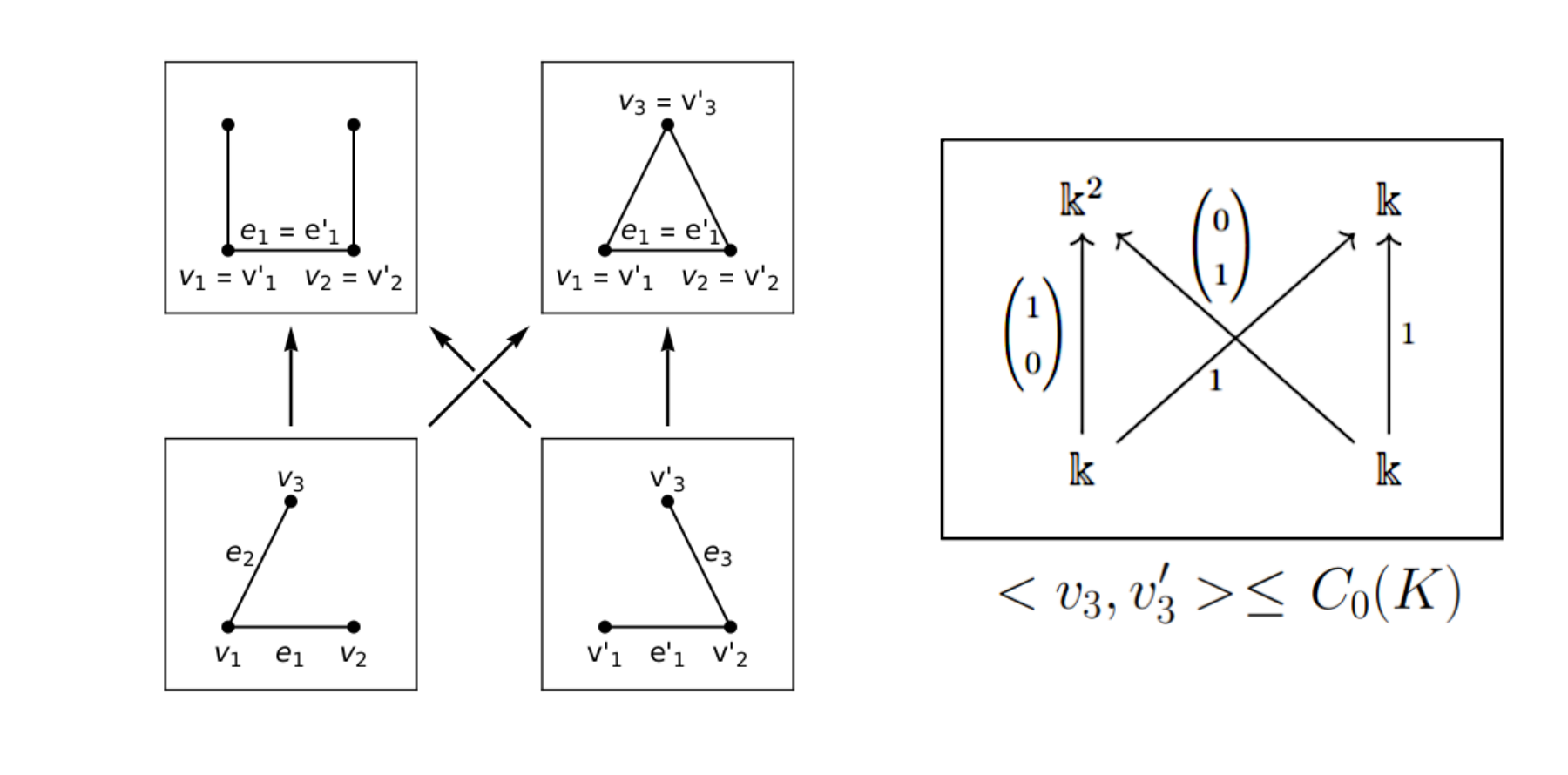}
    \caption{Left: a poset tower over a poset with two maximal elements, where all maps are inclusions. At the left maximal element the two vertices $v_3$ and $v'_3$ are different. In the right maximal element they are identified. Right: The indecomposable summand $<v_3,v'_3>$ of $C_0(K)$, generated by the two vertices $v_3$ and $v'_3$.}
    \label{fig:two_join_poset_indec}
\end{figure}

\textbf{Contributions.} We develop a general-purpose algorithm ({\sc PiRep} in Section~\ref{sec:algorithm}) to compute projective implicit representations of the homology of poset towers $K\colon P\rightarrow \mathbf{SCpx}$, where we allow arbitrary simplicial maps and arbitrary (finite) indexing posets. The algorithm {\sc PiRep} computes degreewise (projective) minimal presentations of the (persistent) simplicial chain complex $C_\bullet(K)$. This means that, for every $\ell$, we compute a minimal presentation $p_\ell^1$ of the chain module $C_\ell(K)$, as depicted in \eqref{eq:intro_presentation}, and a lift $f^0_\ell$ of the boundary map $\partial_\ell$ to the generators. 
\begin{equation} \label{eq:intro_presentation}
\begin{tikzcd}
C_{\ell+1}(K) \arrow[d,swap,"\partial_{\ell+1}"] & G_{\ell+1} \arrow[l] \arrow[d,swap,"f^0_{\ell+1}"] \arrow[ddr,dashed,"\vartheta_{\ell+1}"{xshift=-12pt,yshift=12pt}] & R_{\ell+1} \arrow[l,swap,"p_{\ell+1}^1"] \arrow[d,"f^1_{\ell+1}"] & RR_{\ell+1} \arrow[l,swap,"p_{\ell+1}^2"] \\
C_{\ell}(K) \arrow[d,swap,"\partial_{\ell}"] & G_\ell \arrow[l] \arrow[d,swap,"f^0_\ell"] & R_\ell \arrow[l,swap,"p_\ell^1"{xshift=3pt}] \arrow[d,"f^1_\ell"] & RR_\ell \arrow[l,swap,"p_\ell^2"] \\
C_{\ell\minus 1}(K) & G_{\ell\minus 1}\arrow[l] & R_{\ell\minus 1} \arrow[l,swap,"p_{\ell\minus 1}^1"] & RR_{\ell\minus 1} \arrow[l,swap,"p_{\ell\minus 1}^2"] 
\end{tikzcd}
\end{equation}
Our algorithm takes advantage of the special structure of the chain complex $C_\bullet(K)$. We observe that the presentation matrix $p_\ell^1$ (almost) has the structure of the boundary matrix of a $P$-filtered  graph (Definition~\ref{def:boundary_p}). This allows us to construct a minimal presentation by constructing a graph and avoiding closing (superfluous) cycles. We represent the maps $p_\ell^1$ and $f^0_\ell$ by $P$-graded matrices. Together, they faithfully represent the chain complex $C_\bullet(K)$. Given this representation, one could now compute a projective implicit representation of $H_\ell(K)$ by purely algebraic methods. However, we observe that the structure of $C_\bullet(K)$ can be further exploited to efficiently compute an (asymptotically minimal) second term $p_\ell^2$ in a projective resolution of $C_\ell(K)$, as depicted in \eqref{eq:intro_presentation}. We compute $p_\ell^2$, which can be interpreted as capturing relations among relations, by detecting cycles in a graph associated with $p_\ell^1$. We take advantage of the graph structure of $p_\ell^1$ a third time to compute the maps $f_\ell^1$, lifting the boundary maps $\partial_\ell$ to the relations, and $\vartheta_{\ell+1}$, which corrects for the failure of the maps $f^0_\ell$ to form a chain complex. These maps can be determined by solving linear systems, but the graph structure of the coefficient matrix allows us to solve these systems much more efficiently. Finally, we construct a projective implicit representation of $H_\ell(K)$ by assembling the maps in \eqref{eq:intro_presentation} (see Section \ref{sec:pirep_theory}).

At this point, it is worth mentioning that, in \cite{brown2024discretemicrolocalmorsetheory}, the authors provide an algorithm to compute minimal injective resolutions of sheaves on finite posets. A sheaf of vector spaces on a finite poset is equivalent to a $P$-persistence module, and an injective resolution of a $P$-persistence module is equivalent to a projective resolution of a $P^\text{op}$-persistence module obtained by dualization. Their algorithm does not address computing lifts $f_\ell$, as in \eqref{eq:intro_presentation}, but could be applied in our context to compute the degreewise resolutions. For a resolution of $C_\ell(K)$, as depicted in the rows of \eqref{eq:intro_presentation}, the complexity of the algorithm in \cite{brown2024discretemicrolocalmorsetheory} is $O(t_0c^3)$, where $t_0$ denotes the number of points in the poset, and $c$ is the maximal size of the graded matrices in the output resolution. This cubic behavior, with an additional factor from the size of the poset, is of course expected by a general algorithm based on matrix reduction. A matrix representing a minimal $p_\ell^2$ can have $\Theta(t_1n)$ columns if $n$ denotes the number of simplex generators and $t_1$ the number of edges in the Hasse diagram of the poset (see Proposition \ref{prop:asymptotic_optimality}). This would result in a worst-case complexity of $O(t_0t_1^3n^3)$, while our specialized algorithm achieves a complexity of $O\big((t_0+t_1)n^2\big)$. 

It is also worth noting that, for the special case of $K$ being a graph, efficient algorithms for computing minimal presentations are known. The authors of \cite{morozov2024computingbettitablesminimal} give an algorithm to compute a minimal presentation of $H_0(G)$ of a $P$-filtered graph $G$ in $O(|G|^2)$ time for a general poset $P$ and in $O(|G| \log |G|)$ time for $P=\mathbb{Z}^2$.
The algorithm is not applicable to the general case of simplicial filtrations or towers indexed by a (finite) poset.

\textbf{Outline.} In Section \ref{sec:background}, we discuss the basic notions of poset towers, persistence modules, projective modules, projective resolutions, and projective implicit representations. In Section \ref{sec:pirep_theory}, we discuss how we can construct a PiRep from partial projective resolutions of $C_\ell(K)$ and lifts of the boundary maps $\partial_\ell$, as depicted in \eqref{eq:intro_presentation}. In Section \ref{sec:algorithm}, we present an algorithm {\sc PiRep} to compute a projective implicit representation from a poset tower. In Section \ref{sec:correctness}, we prove the correctness of the algorithm. We conclude in Section \ref{sec:conclusion}. We provide proofs omitted from the main text in Appendix \ref{app:proofs_sec_background} and \ref{app:proofs}. In Appendix \ref{app:tree_solve}, we discuss an efficient algorithm for solving systems of linear equations whose coefficient matrix has the structure of the boundary matrix of a tree, which is used in the algorithm {\sc PiRep}. For completeness, we discuss in Appendix \ref{app:homology_presentation} how to compute a minimal presentation of $H_\ell(K)$ from a PiRep using an algorithm discussed in \cite{brown2024discretemicrolocalmorsetheory}. 

\begin{figure}[htbp]
\centerline{\includegraphics[width=0.8\textwidth]{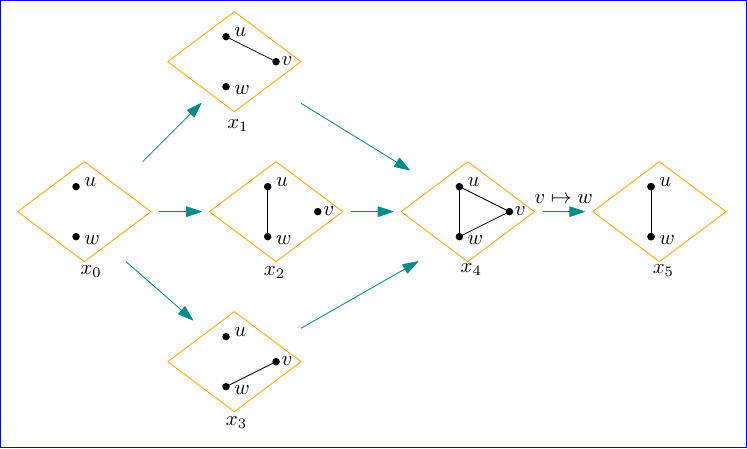}}
\caption{A poset tower on a poset $P=\{x_0,\ldots,x_5\}$ with simplicial complexes comprising
vertices and edges at the nodes ($\diamond$) and inclusions/collapses on poset edges ($\rightarrow$); e.g. vertex $v$ is generated at $x_1,x_2,x_3$, i.e., it has generators $g_v^{x_1},g_v^{x_2},g_v^{x_3}$, which generate two relations $r^{x_4}_1\mapsto g_v^{x_1}+g_v^{x_2}$ and $r^{x_4}_2\mapsto g_v^{x_2}+g_v^{x_3}$ at $x_4$;
there is a vertex collapse $v\mapsto w$ on the poset edge $x_4x_5$;  
this collapse generates the relation $r^{x_5}_1\mapsto g_{vw}^{x_3}$,
it also
generates two copies of $uw$ at $x_5$ with generators $g_{uv}^{x_1}$
and $g_{uw}^{x_2}$ which get identified with a relation
$r^{x_5}_2\mapsto g_{uw}^{x_2}+g_{uv}^{x_1}$ at $x_5$.}
\label{fig:posettower}
\end{figure}

\section{Poset towers, PiReps, and algebraic background} \label{sec:background}

In this section, we introduce the basic notions of poset towers, persistence modules, projective modules, projective resolutions, and projective implicit representations.

\textbf{Notation.} In the following, we denote by $P$ a finite partially ordered set (poset) with its points referred to as \emph{grades}. For $x,y\in P$, we use the notation $x\prec y$ if $x<y$ and there is no grade $z\in P$ such that $x<z<y$. We call such an $x$ a predecessor of $y$. We assume that $P$ is represented by a directed graph (called the \emph{Hasse diagram}), whose nodes are the grades and whose edges are relations $x\prec y$ in $P$. We denote by $\overline{P}\coloneqq P\cup\{-\infty\}$ the poset obtained from $P$ by adding a grade $-\infty$ such that $-\infty<x$ for all $x\in P$.

We also view $P$ as a category with objects the elements of $P$ and a unique morphism $x\rightarrow y$ if $x\leq y$. 
For a functor $F\colon P\rightarrow\mathbf{C}$, we denote by $F(x)$ the image of $x\in P$ under $F$ and by $F(x\leq y)$ the image of $x\rightarrow y$ under $F$ for $x\leq y\in P$. Let the category of functors $P\rightarrow\mathbf{C}$ and their natural transformations be denoted by $\mathbf{C}^P$. We use the notation $\mathbf{Vec}$ for the category of finite-dimensional vector spaces over $\mathbb{F}_2$, $\mathbf{SCpx}$ for the category of finite simplicial complexes and simplicial maps, and $\Delta\mathbf{Cpx}_{\leq 1}$ for the category of finite one-dimensional $\Delta$-complexes (see Chapter 2.1 in \cite{hatcher2002algebraic}).  

\textbf{Coefficients.} Throughout this work, we use coefficients in $\mathbb{F}_2$, as is common in topological data analysis. This choice simplifies both the exposition and the algorithmic treatment. Since we allow general simplicial maps over general posets, working over an arbitrary field would require additional orientation data and sign bookkeeping at the chain level. In particular, one would need to keep track of the signs induced by simplicial maps on oriented simplices, which would substantially complicate both the input representation and the resulting algorithms. 
We do not see an immediate conceptual obstruction to extending our methods to general field coefficients, but the present algorithm and its proof of correctness are developed specifically for $\mathbb{F}_2$.
Finally, we note that although we formulate the algebraic results in Sections~\ref{sec:background} and~\ref{sec:pirep_theory} over $\mathbb{F}_2$, they remain true over arbitrary fields after the appropriate signs are taken into account.

\subsection{Poset towers and persistence modules}

Generalizing the notion of simplicial tower~\cite{dey2014computing,kerber2019barcodes}, we introduce poset towers: 

\begin{definition}[Poset tower] \label{def:poset_tower}
A poset tower is a functor $K\colon P\rightarrow \mathbf{SCpx}$; see, e.g., Figure~\ref{fig:posettower}.
\end{definition}

We denote by $K_\ell(x)$ the set of all $\ell$-simplices in $K(x)$.
In most applications, it is not economical to store a poset tower as a simplicial complex for every grade of the poset $P$. For example, in the case of a multi-filtered simplicial complex, which, in our setting, is a poset tower $K\colon P\rightarrow \mathbf{SCpx}$ over a finite grid $P=\{1,\ldots,m\}^d$ where each map $K(x\leq y)$ is an inclusion, we only need a list of birth grades for every simplex in the complex $K(\text{max})$ at the maximum grade $\text{max}=(m,\ldots,m)\in P$. We generalize this compact representation to general poset towers by introducing simplex generators and edge events.

\begin{definition}[Simplex generators] \label{def:simplex_generators}
Let $K\colon P\rightarrow \mathbf{SCpx}$ be a poset tower. We call the tuple $g_\sigma^x\coloneqq (x,\sigma)$ a generator of the simplex $\sigma$ at $x\in P$ if $\sigma \in K(x)$ and $\sigma\notin \im \!\big(K(y\prec x)\big)$ for all $y\prec x\in P$. We denote by $\mathcal{S}$ the set of all simplex generators of $K$. Formally:
\begin{equation*}
\mathcal{S}\coloneqq\Big\{g_\sigma^x\,\vert\, x\in P\text{,}\sigma\in K(x)\setminus\bigcup_{y\prec x}\im \!\big(K(y\prec x)\big)\Big\}.
\end{equation*}
We denote by ${\mathcal S}_\ell$ the set of generators of $\ell$-simplices. Note that we have $\operatorname{Vert}(K)\coloneqq\bigcup_{x\in P}K_0(x)=\{v\vert g^x_v\in \mathcal{S}_0\}$ and each simplex $\sigma\in\bigcup_{x\in P}K(x)$ is uniquely encoded in terms of these vertices.
\end{definition}

Note that, for the special case of a multi-filtered simplicial complex $K$, the simplex generators $g^x_{\sigma}$ simply recover the birth grades of a simplex $\sigma\in K(\text{max})$ in the complex at the maximum. For general poset towers, the definition of simplex generators provides the flexibility needed to handle situations in which simplices may be collapsed or no maximum grade exists; see Figures~\ref{fig:collapse_grid_indec} and~\ref{fig:two_join_poset_indec}.

Next, we introduce edge events as a compact way to represent the simplicial maps in a poset tower. An edge event over an edge $x\prec y$ is a vertex in $K(x)$ that is mapped to a vertex different from itself along the edge. These edge events account for possible collapses and permutations that can happen with arbitrary simplicial maps, which are simulated by vertex identifications. The collapse $v\mapsto w$ on the poset edge $x_4x_5$ in Figure~\ref{fig:posettower} is an edge event.

\begin{definition}[Edge events] \label{def:edge_events}
Let $K\colon P\rightarrow \mathbf{SCpx}$ be a poset tower. We call the tuple $c_{v\mapsto w}^{x\prec y}\coloneqq(x,y,v,w)$ an edge event if $x\prec y\in P$, $v\in K_0(x)$, $w=K(x\prec y)(v)\in K_0(y)$, and $w\neq v$ as elements in $\operatorname{Vert}(K)$. We denote by $\mathcal{C}$ the set of all edge events of $K$. Formally:
\begin{equation*}
\mathcal{C}\coloneqq \Big\{c_{v\mapsto w}^{x\prec y}\,\vert\, x\prec y\in P, v\in K_0(x), w=K(x\prec y)(v)\in K_0(y), v\neq w\Big\} .
\end{equation*}
\end{definition}

Instead of recording all simplicial maps, we just record when they change a simplex. A multi-filtered simplicial complex, for example, has no edge events. The simplex generators and edge events completely determine the poset tower.

\begin{proposition} \label{prop:tower_from_generators_events}
A poset tower $K\colon P\rightarrow \mathbf{SCpx}$ is completely described by the list of simplex generators $\mathcal{S}$ and edge events $\mathcal{C}$ induced by $K$.   
\end{proposition}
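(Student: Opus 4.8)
The plan is to show that $\mathcal{S}$ and $\mathcal{C}$ determine $K$ by reconstructing it grade by grade, processing the elements of $P$ in a linear extension of the partial order so that, when we reach a grade $x$, all predecessors of $x$ have already been handled. First I would reduce the task to reconstructing only the complexes $K(x)$ and the cover maps $K(x\prec y)$: since $P$ is finite, every relation $x\le y$ factors as a composite of covers along a maximal chain in $[x,y]$, so, $K$ being a functor, the morphism $K(x\le y)$ is forced to equal the corresponding composite of cover maps (independent of the chosen chain, by functoriality of the given $K$). Hence it suffices to pin down $K$ on objects and on covers.

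Next I would recover the cover maps using the fact that a simplicial map is determined by its restriction to vertices. By Definition~\ref{def:edge_events}, the vertex map $K(x\prec y)|_{K_0(x)}$ sends $v\mapsto w$ precisely when $c_{v\mapsto w}^{x\prec y}\in\mathcal{C}$ and sends $v\mapsto v$ for every other $v\in K_0(x)$; so, once $K(x)$ and $K(y)$ are known, $\mathcal{C}$ determines $K(x\prec y)$ on vertices, hence on all simplices (a simplex $\{v_0,\dots,v_k\}$ maps to $\{K(x\prec y)(v_0),\dots,K(x\prec y)(v_k)\}$). For the complexes themselves I would induct along the chosen linear extension. For a minimal grade $x$ there are no predecessors, so $\bigcup_{y\prec x}\im K(y\prec x)=\emptyset$ and Definition~\ref{def:simplex_generators} gives $K(x)=\{\sigma\mid g^x_\sigma\in\mathcal{S}\}$. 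In general, unwinding Definition~\ref{def:simplex_generators} yields the (disjoint) decomposition
\[
K(x)=\{\sigma\mid g^x_\sigma\in\mathcal{S}\}\ \cup\ \bigcup_{y\prec x}\im K(y\prec x),
\]
and by the induction hypothesis each $K(y)$ with $y\prec x$ is known, hence so is $K(y\prec x)$ by the previous paragraph, hence so is $\im K(y\prec x)$; thus the right-hand side, and with it $K(x)$, is determined by $\mathcal{S}$ and $\mathcal{C}$. Iterating to the top and then extending to all morphisms as above recovers $K$.

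The steps above are essentially a direct unwinding of Definitions~\ref{def:simplex_generators} and~\ref{def:edge_events}; the part I expect to need the most care is the bookkeeping of vertex identities. The representation tacitly relies on the convention that a vertex keeps its label along an edge unless an edge event relabels it — so that the absence of a vertex from $\mathcal{C}$ is itself information — and one should check that this makes the induced vertex maps well defined (no two edge events share a source on a single edge, which holds because $K(x\prec y)$ is a function) and that the displayed decomposition of $K(x)$ is closed under taking faces (faces of a simplex generated at $x$ are themselves either generated at $x$ or pulled in from a predecessor). Once these conventions are made precise, the proposition follows.
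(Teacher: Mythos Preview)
Your proof is correct and follows essentially the same approach as the paper: both arguments process $P$ along a linear extension, reconstruct each $K(x)$ via the decomposition $K(x)=\{\sigma\mid g^x_\sigma\in\mathcal{S}\}\cup\bigcup_{y\prec x}\im K(y\prec x)$, and recover the cover maps from $\mathcal{C}$ using that simplicial maps are determined on vertices. Your sketch is slightly more explicit about why it suffices to treat only cover relations and about the vertex-labeling convention, but the underlying argument is the same.
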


\begin{proof}
Let $\mathcal{S}$ and $\mathcal{C}$ be as in Definitions \ref{def:simplex_generators} and \ref{def:edge_events}. We extend $P$ to a linear order $P_{lin}=\{x_1,\ldots,x_n\}$ and construct $K'$ recursively: For $i=1,\ldots, n$: Define $K'(x_i)\coloneqq \{\sigma\vert \exists g_\sigma^{x_i}\in\mathcal{S}\}\cup\bigcup_{y\prec x_i}\im\!\big( K'(y\prec x_i)\big)$. For all $v\in K'_0(x_i)$ and $x_i\prec y$, define the map $K'(x_i\prec y)\colon K'(x_i)\rightarrow \im\!\big(K'(x_i\prec y)\big)$ by $K'(x_i\prec y)(v)\coloneqq w$ if there exists a $c_{v\mapsto w}^{x_i\prec y}\in \mathcal{C}$ and $K'(x_i\prec y)(v)\coloneqq v$ otherwise. 

We now show $K=K'$: Suppose $K(x_i)=K'(x_i)$, $v\in K'_0(x_i)=K_0(x_i)$ and $x_i\prec y$. If $K(x_i\prec y)(v)=v$, there is no edge event $c_{v\mapsto w}^{x_i\prec y}$  in $\mathcal{C}$ and, thus, $K'(x_i\prec y)(v)=v$. If $K(x_i\prec y)(v)=w\neq v$, there is an edge event $c_{v\mapsto w}^{x_i\prec y}\in \mathcal{C}$ and, thus, $K'(x_i\prec y)(v)=w$. Since the images of the vertices completely determine a simplicial map, we get $K(x_i\prec y)=K'(x_i\prec y)$, as maps $K(x_i\prec y)\colon K(x_i)\rightarrow\im\big(K(x_i\prec y)\big)$ and $K'(x_i\prec y)\colon K'(x_i)\rightarrow\im\big(K'(x_i\prec y)\big)$.

We show that $K(x_i)=K'(x_i)$ by induction on the grades in the linear order. The first element $x_1$ must be minimal in $P$. Hence, by definition, for all $\sigma\in K(x_1)$ we have $g^{x_1}_\sigma\in\mathcal{S}$. Therefore, $K(x_1)=K'(x_1)$. Suppose $K(x_j)=K'(x_j)$ for all $j<i$. Every predecessor $y\prec x_i$ satisfies $y=x_j$ for some $j<i$ in the linear extension. Hence, by the argument in the previous paragraph, we get $K(x_i)=\{\sigma\vert \exists g_\sigma^{x_i}\in\mathcal{S}\}\cup\bigcup_{y\prec x_i}\im\!\big( K(y\prec x_i)\big)=\{\sigma\vert \exists g_\sigma^{x_i}\in\mathcal{S}\}\cup\bigcup_{y\prec x_i}\im \!\big(K'(y\prec x_i)\big)=K'(x_i)$. 

Therefore, $K(x)=K'(x)$ and $K(x\prec y)=K'(x\prec y)$ for all $x\prec y\in P$, which implies that $K=K'$.
\end{proof}

We now define the main algebraic objects, the persistence modules, and discuss how we can work with them in a computational framework.

\begin{definition}[$P$-persistence module] \label{def:persistence_module}
 A $P$-persistence module is a functor $M\colon P\rightarrow \mathbf{Vec}$.
\end{definition}

The $P$-persistence modules are precisely representations of finite posets. Calling these objects modules is justified by the fact that the category of representations of finite posets is equivalent to the category of modules over the so-called path or incidence algebra of the poset \cite{assem2006elements,schiffler2014quiver}.
Persistence modules in our context naturally arise as the homology of poset towers. 
The $\ell$-th homology, with coefficients in $\mathbb{F}_2$, can be viewed 
as a functor $H_\ell\colon\mathbf{SCpx}\rightarrow \mathbf{Vec}$ for every $\ell\geq 0$. If we take the composition of the $\ell$-th homology functor with a poset tower $K\colon P\rightarrow\mathbf{SCpx}$, we obtain the persistence module $H_\ell(K)\coloneqq H_\ell\circ K\colon P\rightarrow\mathbf{Vec}$. 

The homology of a simplicial complex is computed from the simplicial chain complex, which can be viewed as an algebraic representation of the complex. For $x\leq y\in P$, we denote by $C_\bullet\big(K(x)\big)$ the simplicial chain complex and by $C_\bullet\big(K(x\leq y)\big)$ the morphism induced by the simplicial map $K(x\leq y)$; see \eqref{eq:simplicial_chain complex}. The vector space $C_\ell\big(K(x)\big)$ has a basis consisting of the $\ell$-simplices in $K(x)$ and $C_\ell\big(K(x\leq y)\big)$ sends $\sigma\in K_\ell(x)$ to $K(x\leq y)(\sigma)$ if $K(x\leq y)(\sigma)\in K_\ell(y)$ and to zero otherwise.
\begin{equation} \label{eq:simplicial_chain complex}
\begin{tikzcd}[row sep=large, column sep=large]
\cdots & C_{\ell\minus 1}\big(K(x)\big) \arrow[l] \arrow[d,"C_{\ell\minus 1}\big(K(x\leq y)\big)"] & C_\ell\big(K(x)\big) \arrow[l,swap,"\partial_{\ell}"] \arrow[d,"C_{\ell}\big(K(x\leq y)\big)"] & C_{\ell+1}\big(K(x)\big) \arrow[l,swap,"\partial_{\ell+1}"] \arrow[d,"C_{\ell+1}\big(K(x\leq y)\big)"] & \cdots \arrow[l] \\
\cdots & C_{\ell\minus 1}\big(K(y)\big) \arrow[l] & C_\ell\big(K(y)\big) \arrow[l,swap,"\partial_{\ell}"] & C_{\ell+1}\big(K(y)\big) \arrow[l,swap,"\partial_{\ell+1}"] & \cdots \arrow[l] 
\end{tikzcd}
\end{equation}

Let $\mathbf{Ch}\big(\mathbf{Vec}\big)$ and $\mathbf{Ch}\big(\mathbf{Vec}^P\big)$ denote the categories of chain complexes over vector spaces and over $P$-persistence modules, respectively. We can view $C_\bullet(K)\colon P\rightarrow \mathbf{Ch}(\mathbf{Vec})$ as a persistent chain complex or, equivalently, as a chain complex of persistence modules in $\mathbf{Ch}\big(\mathbf{Vec}^P\big)$:
\begin{equation*}
\begin{tikzcd}
\cdots & C_{\ell\minus 1}(K) \arrow[l] & C_\ell(K) \arrow[l,swap,"\partial_{\ell}"] & C_{\ell+1}(K) \arrow[l,swap,"\partial_{\ell+1}"] & \cdots \arrow[l]
\end{tikzcd}
\end{equation*}
where each $C_\ell(K)\colon P\rightarrow\mathbf{Vec}$ is a persistence module. We will adopt this global perspective in the following.

\subsection{Projective modules} \label{subsec:projective_modules}

To carry out computations with persistence modules, we need to represent them by matrices. As with poset towers, it is not economical to represent a persistence module by a collection of matrices $M(x\leq y)$ for all $x\leq y\in P$. Instead, we represent persistence modules by elementary building blocks called projective modules. Since we work with a specific class of modules, we will directly define the projective modules as in Definition \ref{def:elementary_module}. See \cite[Theorem III.1.6, Lemma III.2.4]{assem2006elements} for a reference showing that these are exactly the projective modules according to the general definition \cite[Definition 5.2]{assem2006elements}. Note that in persistence theory, one often works with graded free modules. For $\mathbb{Z}^d$-graded modules over graded polynomial rings, these coincide with graded projective modules. In the setting of modules over finite posets, however, projective modules are generally different from free modules; see, for example, \cite[Chapters I.5, III.2]{assem2006elements} or \cite[Definition 2.6]{schiffler2014quiver}.

\begin{definition}[Projective modules] \label{def:elementary_module}
We denote by $\proj[x]\colon P\rightarrow\mathbf{Vec}$ the elementary projective module at $x\in P$ defined for all $y\leq z\in P$ by
\begin{equation*}
\proj[x](y)=\begin{cases} \mathbb{F}_2 &\text{if}~ x\leq y \\ 0 & \text{otherwise} \end{cases} \quad , \qquad \proj[x](y\leq z)=\begin{cases} \text{id} &\text{if}~x\leq y\leq z \\ 0 & \text{otherwise} \end{cases} .
\end{equation*}   
A direct sum $\bigoplus_{i=1}^n\proj[x_i]$ is called a projective module; see Figure \ref{fig:projective} for an example.
\end{definition}

\begin{figure}[htbp]
\centerline{\includegraphics[width=0.5\textwidth]{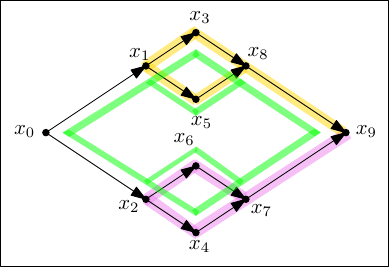}}
\caption{An illustration of a projective module with three elementary projective summands; $\proj[x_0]$ shaded green, $\proj[x_1]$ shaded yellow, and  $\proj[x_2]$ shaded violet.}
\label{fig:projective}
\end{figure}

Projective modules are well-suited for computations because morphisms between them are simple and can be described by $P$-graded matrices.

\begin{proposition}[Properties of morphisms between projective modules]\label{prop:elementary_projective}
$\phantom{a}$
\begin{enumerate}
    \item For $x,y\in P$:
    \begin{equation*}
    \Hom\big(\proj[x],\proj[y]\big)\cong\begin{cases}\mathbb{F}_2 & \text{ if } y\leq x \\ 0 & \text{ otherwise} \end{cases} 
    \end{equation*}
    as $\mathbb{F}_2$-vector spaces.
    \item For $Q=\bigoplus_{i=1}^m\proj[x_i]$ and $W=\bigoplus_{j=1}^n\proj[y_j]$ projective modules:
    \begin{equation*}
    \begin{aligned}
    \Hom\big(Q,W\big)&\cong \Hom\Big(\bigoplus_{i=1}^m\proj[x_i],\bigoplus_{j=1}^n\proj[y_j]\Big)\\&\cong \bigoplus_{j=1}^n \bigoplus_{i=1}^m \Hom\big(\proj[x_i],\proj[y_j]\big)\\&\cong \big\{(\lambda_{ji})_{j,i}\in\mathbb{F}_2^{n\times m}\vert \lambda_{ji}=0\text{ if } y_j\nleq x_i\big\} .
    \end{aligned}
    \end{equation*}
\end{enumerate}
\end{proposition}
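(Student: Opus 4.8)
The plan is to prove both parts by direct computation, using the fact that a natural transformation between persistence modules over $P$ is completely determined by the commutativity squares it must satisfy, and that these squares are trivial away from the "support cones" $\{y : x \leq y\}$.

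For part (1), I would argue as follows. A morphism $\varphi \colon \text{Proj}[x] \rightarrow \text{Proj}[y]$ consists of linear maps $\varphi_z \colon \text{Proj}[x](z) \rightarrow \text{Proj}[y](z)$ for every $z \in P$, commuting with the structure maps. The target $\text{Proj}[y](z)$ is zero unless $y \leq z$, so $\varphi_z = 0$ whenever $y \nleq z$; in particular $\varphi$ is forced to vanish at every $z$ that does not dominate $y$. At any $z$ with $x \leq z$ and $y \leq z$, $\varphi_z$ is a linear map $\mathbb{Z}_2 \rightarrow \mathbb{Z}_2$, i.e.\ an element of $\mathbb{Z}_2$. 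The key observation is that, because both $\text{Proj}[x]$ and $\text{Proj}[y]$ have all structure maps equal to the identity on their supports, naturality forces $\varphi_z = \varphi_{z'}$ for any two comparable $z \leq z'$ both lying in $\{w : x \leq w\} \cap \{w : y \leq w\}$. If $y \leq x$, this common support cone is $\{w : x \leq w\}$, which has minimum $x$, so all the $\varphi_z$ on the cone are equal to the single scalar $\varphi_x$, and conversely any scalar choice yields a well-defined natural transformation; hence $\text{Hom} \cong \mathbb{Z}_2$. If $y \nleq x$, then $x$ itself does not dominate $y$, so $\varphi_x = 0$; I then need to propagate this zero upward to kill $\varphi_z$ for every $z \geq x$ in the overlap cone. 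This is where a small argument is needed: the overlap cone $\{w : x \leq w, y \leq w\}$ need not be connected "through $x$" inside itself — but naturality only constrains $\varphi$ along the edges of $P$, and one shows by induction along a linear extension that $\varphi_z$ for $z$ in the overlap is determined by $\varphi$ at predecessors, ultimately tracing back through $\text{Proj}[x]$'s support whose minimum is $x$. So $\varphi \equiv 0$ and $\text{Hom} = 0$.

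For part (2), the first isomorphism is just unfolding the definition of a projective module as a direct sum. The second and third isomorphisms are the standard additivity of $\text{Hom}$ in both arguments: $\text{Hom}(\bigoplus_i A_i, \bigoplus_j B_j) \cong \bigoplus_j \bigoplus_i \text{Hom}(A_i, B_j)$, which holds in any additive category (here $\mathbf{Vec}^P$, which is additive since $\mathbf{Vec}$ is and limits/colimits are computed pointwise). Plugging in part (1) for each summand $\text{Hom}_{\mathbb{Z}_2}(\text{Proj}[x_i], \text{Proj}[y_j])$, which is $\mathbb{Z}_2$ exactly when $y_j \leq x_i$ and $0$ otherwise, identifies the whole Hom-space with the set of matrices $(\lambda_{ji}) \in \mathbb{Z}_2^{n \times m}$ whose $(j,i)$ entry is forced to be $0$ unless $y_j \leq x_i$. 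I would also remark that under this identification composition of morphisms corresponds to matrix multiplication, though that is not strictly part of the statement.

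The main obstacle is the second half of part (1): carefully justifying that a morphism $\text{Proj}[x] \rightarrow \text{Proj}[y]$ with $y \nleq x$ must be identically zero, rather than merely zero at $x$. One must be careful that the support of $\text{Proj}[x]$ is an up-set with minimum $x$, so that every structure map of $\text{Proj}[x]$ into an element of the overlap cone factors (via identities) from the value at $x$; combined with naturality this forces the target component to agree with $\varphi_x \circ (\text{id}) = 0$. I would phrase this cleanly using the universal property of $\text{Proj}[x]$: a morphism out of $\text{Proj}[x]$ is determined by the image of the generator at grade $x$ (equivalently, $\text{Hom}(\text{Proj}[x], M) \cong M(x)$ naturally, the representability that makes these modules projective), and then $\text{Hom}(\text{Proj}[x], \text{Proj}[y]) \cong \text{Proj}[y](x)$, which is $\mathbb{Z}_2$ if $y \leq x$ and $0$ otherwise — giving both parts in one stroke and bypassing the delicate induction.
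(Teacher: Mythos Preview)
Your proof is correct. The paper actually states this proposition without proof, treating it as a standard fact about projective $P$-modules, so there is nothing to compare against. Your final approach via the representability isomorphism $\text{Hom}(\text{Proj}[x],M)\cong M(x)$ is the cleanest route and immediately yields both cases of part~(1); part~(2) then follows from additivity of $\text{Hom}$ exactly as you describe.

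One minor remark on your direct argument: your worry that ``naturality only constrains $\varphi$ along the edges of $P$'' is unfounded. A natural transformation must commute with \emph{every} morphism $z\leq z'$ in $P$, not just the covering relations. In particular, for any $z\geq x$ the square for $x\leq z$ directly gives $\varphi_z=\varphi_z\circ\text{Proj}[x](x\leq z)=\text{Proj}[y](x\leq z)\circ\varphi_x$, and since $\varphi_x=0$ when $y\nleq x$ this forces $\varphi_z=0$ without any induction along a linear extension. So the direct argument is in fact simpler than you feared, though the Yoneda argument you settle on is still the most elegant.
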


\begin{proof}
See Appendix \ref{app:proofs_sec_background}.
\end{proof}

\begin{definition}[$P$-graded matrix] \label{def:P-graded_matrix}
Let $f\colon Q\rightarrow W$ be a morphism of projective modules as in Proposition \ref{prop:elementary_projective}(2). Then $f$ is completely determined by a matrix $(f_{ji})_{i=1,j=1}^{m,n}\in\mathbb{F}_2^{n\times m}$ whose columns and rows are identified with the elementary projective modules in $Q$ and $W$, respectively. We call a matrix representation $(f_{ji})_{i=1,j=1}^{m,n}$ whose columns are labeled with the grades $x_i$ of the summands $\proj[x_i]$ of $Q$ and whose rows are labeled with the grades $y_j$ of the summands $\proj[y_j]$ of $W$ a \emph{$P$-graded} matrix representation of $f$.
\end{definition}

\textbf{Notation for projective generators.} To simplify the exposition, we introduce the following notation: We view an elementary projective module $\proj[x]\colon P\rightarrow\mathbf{Vec}$ as a formal span of a projective generator $\langle b^{x}\rangle$ and a general projective module $\bigoplus_{i=1}^n\proj[x_i]$ as $\langle b^{x_1},\ldots, b^{x_n}\rangle$. If we pick an element $x\in P$, then the vector space $\big(\bigoplus_{i=1}^n\proj[x_i]\big)(x)$ has a basis $\{b^{x_j}\vert x_j\leq x\}$ consisting of all formal projective generators born below $x$ in $P$. Moreover, for $x\leq y\in P$, the linear map $\big(\bigoplus_{i=1}^n\proj[x_i]\big)(x\leq y)$ is induced by the inclusion of basis elements  $\{b^{x_j}\vert x_j\leq x\}\xhookrightarrow{} \{b^{x_k}\vert x_k\leq y\}$. This allows us to simplify the notation for maps between projective modules $\proj[y]\xrightarrow{1} \proj[x]$ by $b^{y}\xmapsto{} b^{x}$ for $y\leq x$ and, more generally, 
\begin{equation*}
\proj[y]\xrightarrow{\begin{pmatrix} 1 \\ \vdots \\ 1\end{pmatrix}} \bigoplus_{i=1}^n\proj[x_i]
\end{equation*}
by $b^y\mapsto b^{x_1}+\cdots + b^{x_n}$. For fixed $x_i\leq y\leq z\in P$ this can again be interpreted as the linear map sending the basis element $b^y$ of $\big(\proj[y]\big)(z)$ to the sum of basis elements $b^{x_1}+\cdots + b^{x_n}$ of $\big(\bigoplus_{i=1}^n\proj[x_i]\big)(z)$. Given this notion of projective generators, we can also define $P$-graded matrices to have columns and rows labeled by the projective generators of the corresponding summands. \\

\textbf{Equations of morphisms of projective modules.} In the following, we will encounter the problem of solving equations $A\circ X=B$ of morphisms of projective modules, as depicted in \eqref{eq:projective_object_7}. The special properties of projective modules allow us to solve such equations, which we will call \emph{$P$-graded linear systems}, via standard linear algebra.

\begin{definition}[$P$-graded linear system] \label{def:P_graded_system}
For a poset $P$, projective modules $Q=\bigoplus_{i=1}^n \proj[x_i]$, $U=\bigoplus_{i=1}^m \proj[y_i]$, $R=\bigoplus_{i=1}^s \proj[z_i]$, and morphisms $A$ and $B$ as in \eqref{eq:projective_object_7}, 
\begin{equation} \label{eq:projective_object_7}
\begin{tikzcd}
U \arrow[d,swap,"B"] \arrow[dr,dashed,"X"] \\
Q  & R \arrow[l,"A"]
\end{tikzcd}
\end{equation}
we call the equation $A\circ X=B$ a $P$-graded linear system for the unknown morphism $X$. 

By Proposition \ref{prop:elementary_projective}, the vector space $\Hom(U,R)$ can be described as $\{(X_{ij})_{i,j}\in \mathbb{F}_2^{s\times m}\vert X_{ij}=0\text{ if }z_i\nleq y_j\}$. Using analogous matrix representations $(A_{ij})_{i,j}$ and $(B_{ij})_{i,j}$ of $A$ and $B$, respectively, we obtain the matrix representation of the composition by matrix multiplication: $(A\circ X)_{ij}=\sum_{k=1}^s A_{ik}X_{kj}$. The $P$-graded linear system $A\circ X=B$ can then equivalently be defined as the following constraint linear system with coefficient matrix $A$:
\begin{equation} \label{eq:P_linear_system}
\begin{aligned}
& (A\circ X)_{ij}=\sum_{k=1}^s A_{ik}X_{kj}=B_{ij} \hspace{3pt},\hspace{5pt} \forall\hspace{2pt} 1\leq i \leq n \hspace{1pt}, \hspace{2pt}1\leq j\leq m \\
& X_{kj}=0 \hspace{3pt},\hspace{5pt}\forall\hspace{2pt} 1\leq k \leq s \hspace{1pt}, \hspace{2pt}1\leq j\leq m \text{ such that }  z_k\nleq y_j .
\end{aligned}
\end{equation}
\end{definition}

The following proposition shows that a $P$-graded linear system, as in Definition \ref{def:P_graded_system}, can be reduced to $m$ unconstrained linear systems. 

\begin{proposition} \label{prop:eliminate_constraints}
Let $AX=(AX_1,\ldots,AX_m)=(B_1,\ldots,B_m)=B$ and $X_{kj}=0$ for all $z_k\nleq y_j$ be a $P$-graded linear system as in Definition \ref{def:P_graded_system}. Let $A^{\leq y_j}$, $A^{\nleq y_j}$, $X_j^{\leq y_j}$, and $X_j^{\nleq y_j}$ be the submatrices of columns and rows indexed by $z_k$ such that $z_k\leq y_j$ and $z_k\nleq y_j$, respectively. Then $AX=B$ and $X_{kj}=0$ for all $z_k\nleq y_j$ if and only if $A^{\leq y_j}X^{\leq y_j}_j=B_j$ and $X_j^{\nleq y_j}=0$ for all $1\leq j\leq m$. 
\end{proposition}

\begin{proof}
See Appendix \ref{app:proofs_sec_background}.
\end{proof}

\subsection{Projective resolutions} \label{subsec:projective_resolutions}

Every persistence module, and also every morphism between persistence modules can be expressed by projective modules and morphisms between them. We will use this fact to translate complex algebraic objects into $P$-graded matrices.  

\begin{definition}[Projective resolution and presentation] \label{def:projective_resolution}
$\phantom{a}$
\begin{enumerate}
    \item A projective resolution of a $P$-persistence module $M$ is a chain complex
    \begin{equation}\label{eq:projres_def}
    \begin{tikzcd}
    0 & M \arrow[l,swap,"q_{\minus 1}"] & Q_0 \arrow[l,swap,"q_0"] & Q_1 \arrow[l,swap,"q_1"] & Q_2 \arrow[l,swap,"q_2"] & \cdots \arrow[l,swap,"q_3"]
    \end{tikzcd}
    \end{equation}
    where $Q_i$ is a projective module and $\ker(q_{i\minus 1})=\im(q_i)$ for all $i\geq 0$.

    \item Let $\phi\colon M\rightarrow N$ be a morphism of $P$-persistence modules and $M\leftarrow Q_\bullet$ and $N\leftarrow W_\bullet$ be projective resolutions of $M$ and $N$, respectively. A lift of $\phi$ to these projective resolutions is a family of morphisms $f_\bullet\colon Q_\bullet\rightarrow W_\bullet$:
    \begin{equation}\label{eq:projres_map_def}
    \begin{tikzcd}
    0 & M \arrow[l,swap,"q_{\minus 1}"] \arrow[d,swap,"\phi"] & Q_0 \arrow[l,swap,"q_0"] \arrow[d,"f_0"] & Q_1 \arrow[l,swap,"q_1"] \arrow[d,"f_1"] & Q_2 \arrow[l,swap,"q_2"] \arrow[d,"f_2"] & \cdots \arrow[l,swap,"q_3"] \\
    0 & N \arrow[l,swap,"w_{\minus 1}"] & W_0 \arrow[l,swap,"w_0"] & W_1 \arrow[l,swap,"w_1"] & W_2 \arrow[l,swap,"w_2"] & \cdots \arrow[l,swap,"w_3"]
    \end{tikzcd}
    \end{equation}
    such that all squares in \eqref{eq:projres_map_def} commute.

    \item A (projective) presentation of a $P$-persistence module $M$ is an exact sequence of the form 
    \begin{equation} \label{eq:def_proj_presentation}
        0\leftarrow M\xleftarrow{q_0} Q_0\xleftarrow{q_1} Q_1
    \end{equation}
    where $Q_0$ and $Q_1$ are projective modules, $q_0$ is an epimorphism, and $\ker(q_{0})=\im(q_1)$.
\end{enumerate}
\end{definition}

Note that while $M\xleftarrow{q_0} Q_0$ in Definition \ref{def:projective_resolution} is important for theoretical arguments and proofs, in practice $M$ is fully described by $q_1$ since $M\cong \coker(q_1)$. 

\begin{proposition}[Existence of projective resolutions] \label{prop:existence_resolutions}
$\phantom{a}$
\begin{enumerate}
    \item Let $M$ be a $P$-persistence module. Then there exists a projective resolution, as in \eqref{eq:projres_def}. The resolution is unique up to homotopy equivalence of chain complexes. 
    \item Let $\phi\colon M\rightarrow N$ be a morphism of $P$-persistence modules and $M\leftarrow Q_\bullet$ and $N\leftarrow W_\bullet$ projective resolutions. Then there exists a lift $f_\bullet\colon Q_\bullet\rightarrow W_\bullet$ of $\phi$, as in \eqref{eq:projres_map_def}. The lift is unique up to chain homotopy. This means that if $g_\bullet\colon Q_\bullet\rightarrow W_\bullet$ is another lift of $\phi$:
    \begin{equation*}
    \begin{tikzcd}
    0 & M \arrow[l,swap,"q_{\minus 1}"] \arrow[d,swap,"\phi"] & Q_0 \arrow[l,swap,"q_0"] \arrow[d,xshift=2pt,"f_0"] \arrow[d,swap,xshift=-2pt,"g_0"] \arrow[dr,dashed,"h_0"] & Q_1 \arrow[l,swap,"q_1"] \arrow[d,xshift=2pt,"f_1"] \arrow[d,swap,xshift=-2pt,"g_1"] \arrow[dr,dashed,"h_1"] & Q_2 \arrow[l,swap,"q_2"] \arrow[d,xshift=2pt,"f_2"] \arrow[d,swap,xshift=-2pt,"g_2"] \arrow[dr,dashed,"h_2"] & \cdots \arrow[l,swap,"q_3"] \\
    0 & N \arrow[l,swap,"w_{\minus 1}"] & W_0 \arrow[l,swap,"w_0"] & W_1 \arrow[l,swap,"w_1"] & W_2 \arrow[l,swap,"w_2"] & \cdots \arrow[l,swap,"w_3"]
    \end{tikzcd}
    \end{equation*}
    then there exists a chain homotopy $h_\bullet\colon Q_\bullet\rightarrow W_{\bullet+1}$ such that $f_i+g_i=h_{i\minus 1}\circ q_i+w_{i+1}\circ h_i$ for all $i\geq 1$ and $f_0+g_0=w_1\circ h_0$.
    \item Resolutions and lifts can be constructed iteratively. Any partial resolution or lift satisfying the conditions in Definition \ref{def:projective_resolution}(1) or (2) up to index $i\geq 0$ can be extended to a full resolution or lift, respectively.   
\end{enumerate} 
\end{proposition}

\begin{proof}
See Lemma 2.2.5 and Theorem 2.2.6 in \cite{Weibel_1994}.
\end{proof}

\textbf{Minimal projective resolutions.} Since presentations and resolutions are not unique up to isomorphism, they can contain superfluous information. For computational efficiency, we want resolutions that are as small as possible.

\begin{definition}[Minimal projective resolution] \label{def:minimal_resolution}
A projective resolution $M\leftarrow Q_\bullet$, as in Definition  \ref{def:projective_resolution}, is called minimal if $Q_i$ minimizes the number of elementary projective summands for all $i\geq 0$. A presentation is called minimal if $Q_0$ and $Q_1$ minimize the number of elementary projective summands.
\end{definition}

We will use an equivalent characterization of minimal resolutions based on the notion of the radical and projective cover. This notion of minimality will be crucial for the algorithm {\sc PiRep} and the proof of its correctness. 

\begin{definition}[Radical] \label{def:radical}
Let $M$ be a $P$-persistence module. Then $\mathrm{Rad}(M)\colon P\rightarrow \mathbf{Vec}$ defined for all $x\leq y\in P$ by 
\begin{equation*}
\begin{aligned}
\mathrm{Rad}(M)(x)&\coloneqq \sum_{u<x}\im\!\big( M(u<x)\big) \\
\mathrm{Rad}(M)(x\leq y)&\coloneqq \left[ \sum_{u<x}\im \!\big(M(u<x)\big)\xrightarrow{M(x\leq y)} \sum_{v<y}\im \!\big(M(v<y)\big) \right]
\end{aligned}
\end{equation*}
is called the radical of $M$.
\end{definition}

\begin{definition}[Projective cover] \label{def:projective_cover}
Let $M$ be a $P$-persistence module. Then a morphism $g\colon Q\rightarrow M$ is called a projective cover of $M$ if $Q$ is projective, $g$ is an epimorphism, and $\ker(g)\subseteq \mathrm{Rad}(Q)$.
\end{definition}

\begin{proposition}\label{prop:minimal_eqq_projective_cover}
A projective resolution, as in Definition \ref{def:projective_resolution}, is minimal, according to Definition \ref{def:minimal_resolution}, if and only if $q_i\colon Q_i\rightarrow\ker(q_{i\minus 1})$ is a projective cover for all $i\geq 0$. Similarly, a projective presentation, as in Definition \ref{def:projective_resolution}, is minimal, according to Definition \ref{def:minimal_resolution}, if and only if $q_0:Q_0\rightarrow M$ and $q_1\colon Q_1\rightarrow \ker(q_0)$ are projective covers. 
\end{proposition}

\begin{proof}
See Appendix \ref{app:proofs_sec_background}.
\end{proof}

Finally, in our setting, minimal resolutions always exist.

\begin{proposition}[Existence of minimal projective resolutions] \label{prop:existence_minimal_resolutions}
Let $M$ be a $P$-persistence module over a finite poset valued in finite-dimensional vector spaces. Then $M$ has a minimal projective resolution. The minimal resolution is unique up to isomorphism, consists of finite direct sums of elementary projective modules, and its length is bounded by the longest chain in the poset $P$. Moreover, every minimal presentation is the restriction of a minimal projective resolution. 
\end{proposition}

\begin{proof}
See Theorem 5.8 and Corollary 5.10 in \cite{assem2006elements} as well as Proposition 2.2.6 in \cite{ROGNERUD2021107885}.
\end{proof}

\subsection{Projective implicit representations}

To compute the persistent homology of a poset tower $K\colon P\rightarrow \mathbf{SCpx}$, we need to compute the homology of the chain complex of $P$-persistence modules $C_\bullet(K)$. For this task, it would be ideal if we had a projective resolution of the whole chain complex $C_\bullet(K)$, which can be defined as a chain complex of projective modules $Q_\bullet$ together with a morphism of chain complexes $f_\bullet\colon Q_\bullet\rightarrow C_\bullet(K)$:
\begin{equation*}
\begin{tikzcd}
0 & Q_0 \arrow[l] \arrow[d,"f_0"] & Q_1 \arrow[l,swap,"q_1"] \arrow[d,"f_1"] & Q_2 \arrow[l,swap,"q_2"] \arrow[d,"f_2"] & \cdots \arrow[l,swap,"q_3"] \\
0 & C_0(K) \arrow[l] & C_1(K) \arrow[l,swap,"\partial_1"] & C_2(K) \arrow[l,swap,"\partial_2"] & \cdots \arrow[l,swap,"\partial_3"] 
\end{tikzcd}
\end{equation*}
such that $H_\ell(f_\bullet)\colon H_\ell(Q_\bullet)\rightarrow H_\ell\big(C_\bullet(K)\big)$ is an isomorphism for all $\ell\geq 0$. Computing such a resolution for arbitrary posets is a difficult problem. Fortunately, to compute the homology $H_\ell\big(C_\bullet(K)\big)$, we only need to replace the relevant segment of $C_\bullet(K)$ by projective modules. 

\begin{definition}[Projective implicit representation] \label{def:pirep}
Let 
\begin{equation*}
\begin{tikzcd}
C^S\colon &[-20pt] C_{l} & C_c \arrow[l,swap,"\partial_{c}"] & C_{r} \arrow[l,swap,"\partial_r"] \arrow[ll,bend left=15pt, "0"]
\end{tikzcd}
\end{equation*}
be a chain complex segment of $P$-persistence modules. We call a chain complex segment
\begin{equation*}
\begin{tikzcd}
D^S\colon &[-20pt] D_{l} & D_c \arrow[l,swap,"d_{c}"] & D_{r} \arrow[l,swap,"d_{r}"] \arrow[ll,bend left=15pt, "0"]
\end{tikzcd}
\end{equation*}
a projective implicit representation \emph{(PiRep)} of $H(C^S)\coloneqq \ker(\partial_c)/\im(\partial_{r})$ if $D_x$ is projective for $x\in\{l,c,r\}$ and $H(C^S)\cong \ker(d_c)/\im(d_{r})\eqqcolon H(D^S)$.
\end{definition}

In the following, we abbreviate projective implicit representation as \emph{PiRep}.

\begin{remark}
We note that we could also define a PiRep of $H(C^S)$ as a chain complex segment $D^S$ together with maps: 
\begin{equation*}
\begin{tikzcd}
D^S\colon \arrow[d,swap,"f^S"] &[-20pt] D_{l} \arrow[d,"f_{l}"] & D_c \arrow[l,swap,"d_{c}"] \arrow[d,"f_{c}"] & D_{r} \arrow[l,swap,"d_{r}"] \arrow[d,"f_{r}"] \\ 
C^S\colon &[-20pt] C_{l} & C_c \arrow[l,swap,"\partial_{c}"] & C_{r} \arrow[l,swap,"\partial_{r}"]
\end{tikzcd}
\end{equation*}
such that the squares commute and $H(f^S)\colon H(D^S)\rightarrow H(C^S)$ is an isomorphism. This could be viewed as a ``local'' projective resolution and is more natural from a theoretical perspective. The PiReps we will construct in the following satisfy this stronger property. Nevertheless, we adopt Definition~\ref{def:pirep} to align with the notion of free implicit representations used in the TDA literature. We also note that the most prevalent construction of free implicit representations \cite{chacholski2017combinatorial} does not satisfy this stronger property.
\end{remark}

\section{Constructing PiReps from projective resolutions} \label{sec:pirep_theory}

In this section, we establish the theoretical basis for our method to compute a PiRep of $C_{\ell\minus 1}(K) \xleftarrow{\partial_\ell} C_\ell(K)\xleftarrow{\partial_{\ell+1}} C_{\ell+1}(K)$, as in Definition \ref{def:pirep}, from a poset tower $K$. Our method consists of two steps. In the first step, we compute degreewise projective resolutions of $C_\ell(K)$ up to the second term, together with lifts of the boundary maps $\partial_\ell$ to the resolutions up to the first term, as depicted in \eqref{eq:diagram_homology_from_resolutions} below. In the second step, we assemble the partial resolutions and maps in \eqref{eq:diagram_homology_from_resolutions}, together with an additional correction term $\vartheta_{\ell+1}$, to obtain a PiRep of $H_\ell(K)$. We note that the results presented below hold for arbitrary chain complexes of $P$-persistence modules, but since we focus on poset towers, we present them in this context.

\begin{equation} \label{eq:diagram_homology_from_resolutions}
\begin{tikzcd}
0 & C_{\ell+1}(K) \arrow[d,swap,"\partial_{\ell+1}"] \arrow[l] & G_{\ell+1} \arrow[l,swap,"\alpha_{\ell+1}"] \arrow[d,swap,"f_{\ell+1}^0"] \arrow[ddr,dashed,"\vartheta_{\ell+1}"{xshift=-12pt,yshift=12pt}] & R_{\ell+1} \arrow[l,swap,"p_{\ell+1}^1"] \arrow[d,"f_{\ell+1}^1"] & RR_{\ell+1} \arrow[l,swap,"p_{\ell+1}^2"] \\
0 & C_{\ell}(K) \arrow[d,swap,"\partial_{\ell}"] \arrow[l] & G_\ell \arrow[l,swap,"\alpha_{\ell}"] \arrow[d,swap,"f_\ell^0"] & R_\ell \arrow[l,swap,"p_\ell^1"{xshift=4pt}] \arrow[d,"f_\ell^1"] & RR_\ell \arrow[l,swap,"p_\ell^2"] \\
0 & C_{\ell\minus 1}(K) \arrow[l] & G_{\ell\minus 1} \arrow[l,swap,"\alpha_{\ell\minus 1}"]  & R_{\ell\minus 1} \arrow[l,swap,"p_{\ell\minus 1}^1"]  & RR_{\ell\minus 1} \arrow[l,swap,"p_{\ell\minus 1}^2"] 
\end{tikzcd}
\end{equation}

We use the following notation for the partial resolutions of $C_\ell(K)$, illustrated in the rows of \eqref{eq:diagram_homology_from_resolutions}. The zeroth term of the resolution is denoted by $G_\ell$. It should be thought of as the module of generators of $C_\ell(K)$. We will show in Section \ref{sec:p1correctness} that the set of simplex generators $\mathcal{S}_\ell$ of $K$ gives rise to a minimal set of (projective) generators of $C_\ell(K)$. In the following, we will slightly abuse notation and also denote the projective generators of $G_\ell$ by $g_\sigma^x$. Hence, we obtain $G_\ell=\bigoplus_{g_\sigma^x\in\mathcal{S}_\ell}\proj[x]$. This means that with the right choice of $\alpha_\ell$, as discussed in detail in Section \ref{sec:p1correctness}, $\alpha_\ell\colon G_\ell\rightarrow C_\ell(K)$ defines the first step of a minimal resolution (see Proposition~\ref{prop:alpha_proj_cover}). The first term of the resolution is denoted by $R_\ell$ and should be thought of as the module of relations of the generators $G_\ell$ of $C_\ell(K)$. We will denote its projective generators by $r^x$ and simply call them relations. The second term of the resolution is denoted by $RR_\ell$ and should be thought of as the relations of relations $R_\ell$ of $C_\ell(K)$. We denote its projective generators by $rr^x$ and call them relations of relations. 

Figure \ref{fig:HomologyExample} shows an example of a poset tower and \eqref{eq:poset_tower_presentation} shows the partial resolutions of $C_\ell(K)$ together with the lifts of the boundary maps, as in \eqref{eq:diagram_homology_from_resolutions}. 

\begin{figure}[htbp]
\centerline{\includegraphics[width=0.7\textwidth]{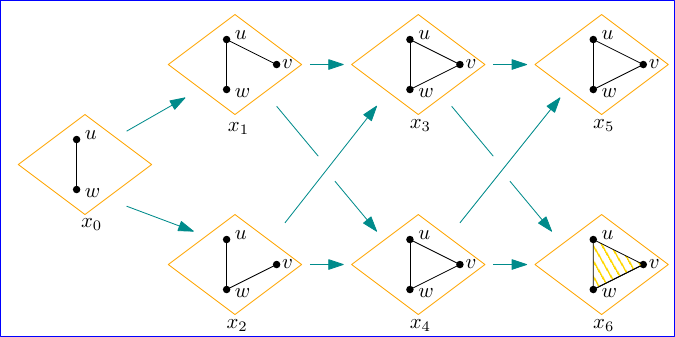}}
\caption{A poset tower over $P=\{x_0,\ldots,x_6\}$ where all the maps are inclusions. Every simplex has a unique generator except $v$, which appears at $x_1$ and $x_2$. The two copies of $v$ get identified at $x_3$ and $x_4$ by relations $r^{x_3},r^{x_4}\mapsto g^{x_1}_v+g^{x_2}_v$. These two relations become the same at $x_5$ and $x_6$, leading to relations of relations $rr^{x_5},rr^{x_6}\mapsto r^{x_3}+r^{x_4}$.}
\label{fig:HomologyExample}
\end{figure}

\begin{equation} \label{eq:poset_tower_presentation}
\begin{tikzcd}[ampersand replacement=\&,every label/.append style = {font = \footnotesize},column sep=large, row sep=large]
G_2 \arrow{d}[swap,xshift=-7pt,yshift=5pt]{\begin{blockarray}{c@{\hspace{7pt}}c}
& g^{x_6}_{uvw} \\
\begin{block}{c@{\hspace{7pt}}(c)}
g^{x_0}_{uw} & 1 \\[2pt]
g^{x_1}_{uv} & 1 \\[2pt]
g^{x_2}_{vw} & 1 \\[2pt]
\end{block}
\end{blockarray}}{f_2^0}  \& 0 \arrow[l,swap,"p_2^1"] \arrow[d,"f_2^1"] \& 0 \arrow[l,swap,"p_2^2"] \\
G_1 \arrow{d}[swap,xshift=-10pt,yshift=-7pt]{\begin{blockarray}{c@{\hspace{7pt}}ccc}
& g^{x_0}_{uw} & g^{x_1}_{uv} & g^{x_2}_{vw} \\
\begin{block}{c@{\hspace{7pt}}(ccc)}
g^{x_0}_{u} & 1 & 1 & 0 \\[2pt]
g^{x_0}_{w} & 1 & 0 & 1 \\[2pt]
g^{x_1}_{v} & 0 & 1 & 0 \\[2pt]
g^{x_2}_{v} & 0 & 0 & 1 \\[2pt]
\end{block}
\end{blockarray}}{f_1^0}   \& 0 \arrow[l,swap,"p_1^1"] \arrow[d,"f_1^1"] \& 0 \arrow[l,swap,"p_1^2"] \\
G_0 \& R_0 \arrow{l}[xshift=-15pt,yshift=-7pt]{\begin{blockarray}{c@{\hspace{7pt}}cc}
& r^{x_3} & r^{x_4} \\
\begin{block}{c@{\hspace{7pt}}(cc)}
g^{x_0}_{u} & 0 & 0 \\[2pt]
g^{x_0}_{w} & 0 & 0 \\[2pt]
g^{x_1}_{v} & 1 & 1 \\[2pt]
g^{x_2}_{v} & 1 & 1 \\[2pt]
\end{block}
\end{blockarray}}[swap]{p_0^1}  \& RR_0 \arrow{l}[xshift=3pt,yshift=-7pt]{\begin{blockarray}{c@{\hspace{7pt}}cc}
& rr^{x_5} & rr^{x_6} \\
\begin{block}{c@{\hspace{7pt}}(cc)}
r^{x_3} & 1 & 1 \\[2pt]
r^{x_4} & 1 & 1 \\[2pt]
\end{block}
\end{blockarray}}[swap]{p_0^2}
\end{tikzcd}
\end{equation}

We now need to assemble the projective modules and matrices in \eqref{eq:diagram_homology_from_resolutions} in such a way that we obtain a PiRep of $H_\ell(K)=\ker(\partial_\ell)/\im(\partial_{\ell+1})$.
We note that this is more or less standard homological algebra, but we are not aware of a good reference explaining this process. We provide proofs of the results in this section, omitted from the main text, in Appendix~\ref{app:proofs}.

We start by discussing how to obtain $\ker(\partial_\ell)$ from the resolutions. The running example in Figure \ref{fig:HomologyExample} and \eqref{eq:poset_tower_presentation} illustrates the intricacies of this process. As one can see, the matrix $f_1^0$ representing $\partial_1$ on the generators has no kernel. The image of the generators representing the triangle $uvw$ is $f_1^0\big(g^{x_0}_{uw}+g^{x_1}_{uv}+g^{x_2}_{vw}\big)=g^{x_1}_v+g^{x_2}_v$. This is because at $x_1$ the only available generator for the boundary vertex $v$ of the edge $uv$ is $g^{x_1}_v$ whereas at $x_2$ only $g^{x_2}_v$ is available for the boundary of the edge $vw$. Hence, the kernel of $f_1^0$ is insufficient to represent the kernel of $\partial_1$, which is the $P$-persistence module $\ker (\partial_1)\colon P\rightarrow \mathbf{Vec}$ that is grade-wise generated by the cycles $\ker \big(\partial_1(x)\big)$ in Figure \ref{fig:HomologyExample}. To get the correct kernel, we have to recognize that at $x_3$ and $x_4$, the two copies of $v$ get identified, which is witnessed by the relations $r^{x_3}$ and $r^{x_4}$. The boundary of the triangle is zero modulo these relations. To account for this, we define the morphism: 
\begin{equation}\label{eq:kernel_generators}
\begin{pmatrix}f_\ell^0 & p_{\ell\minus 1}^1\end{pmatrix}\colon G_\ell\oplus R_{\ell\minus 1}\rightarrow G_{\ell\minus 1}
\end{equation}
whose kernel serves as the generators of $\ker(\partial_\ell)$. In the example, we obtain the matrix:
\begin{equation} \label{eq:kernel_example}
\begin{pmatrix}f_1^0 & p_0^1\end{pmatrix}=
\begin{blockarray}{cccccc}
& g^{x_0}_{uw} & g^{x_1}_{uv} & g^{x_2}_{vw} & r^{x_3} & r^{x_4} \\
\begin{block}{c(ccccc)}
g^{x_0}_{u} & 1 & 1 & 0 & 0 & 0 \\
g^{x_0}_{w} & 1 & 0 & 1 & 0 & 0 \\
g^{x_1}_{v} & 0 & 1 & 0 & 1 & 1 \\
g^{x_2}_{v} & 0 & 0 & 1 & 1 & 1 \\    
\end{block}
\end{blockarray}
\end{equation}
whose kernel is generated by $g^{x_0}_{uw}+g^{x_1}_{uv}+g^{x_2}_{vw}+r^{x_3}$ and $g^{x_0}_{uw}+g^{x_1}_{uv}+g^{x_2}_{vw}+r^{x_4}$. This kernel represents the triangles at $x_3$ and $x_4$ in Figure \ref{fig:HomologyExample}. A problem ensues because these two triangles become the same triangle at $x_5$ and $x_6$. Thus, the kernel of $\partial_1$ has relations at $x_5$ and $x_6$. Since there are no relations between the edges, these relations have to come from relations on $r^{x_3}$ and $r^{x_4}$. In general, relations on the kernel can, of course, also come from relations on the edges. We account for the relations in $G_\ell$ and $R_{\ell\minus 1}$ by defining the following morphism: 
\begin{equation} \label{eq:kernel_relations}
\begin{pmatrix}p_\ell^1 & 0 \\ f_{\ell}^1 & p_{\ell\minus 1}^2\end{pmatrix}\colon R_\ell\oplus RR_{\ell\minus 1}\rightarrow\ker\begin{pmatrix}f_\ell^0 & p_{\ell\minus 1}^1\end{pmatrix}
\end{equation}
from the relations of the domain of \eqref{eq:kernel_generators} to the kernel of $\begin{pmatrix}f_\ell^0 & p_{\ell\minus 1}^1\end{pmatrix}$ where we additionally include the morphism $f^1_{\ell}\colon R_\ell\rightarrow R_{\ell\minus 1}$ from \eqref{eq:diagram_homology_from_resolutions}. The morphism $f^1_{\ell}$ is needed to make \eqref{eq:kernel_relations} well-defined. Since
\begin{equation*}
\begin{pmatrix}f_\ell^0 & p_{\ell\minus 1}^1\end{pmatrix}\circ \begin{pmatrix}p_\ell^1 & 0 \\ f^1_{\ell} & p_{\ell\minus 1}^2\end{pmatrix}=\begin{pmatrix}f_\ell^0\circ p_\ell^1+p^1_{\ell\minus 1}\circ f^1_{\ell} & p^1_{\ell\minus 1}\circ p_{\ell\minus 1}^2\end{pmatrix}=0,
\end{equation*}
we obtain a well-defined map to the kernel. In our running example, the matrix of relations on the kernel is just $p_0^2$, which leads to the identification of the two generators of the kernel at $x_5$ and $x_6$.  

\begin{proposition} \label{prop:kernel_quotient}
The following sequence is exact:
\begin{equation}
\begin{tikzcd}[ampersand replacement=\&,every label/.append style = {font = \small}]
0 \& \ker(\partial_\ell) \arrow[l] \&[15pt] \ker\begin{pmatrix}f_\ell^0 & p_{\ell\minus 1}^1\end{pmatrix} \arrow{l}[swap]{\begin{pmatrix}\alpha_\ell & 0\end{pmatrix}} \&[25pt] R_\ell\oplus RR_{\ell\minus 1} \arrow{l}[swap]{\begin{pmatrix}p_\ell^1 & 0 \\ f^1_{\ell} & p_{\ell\minus 1}^2\end{pmatrix}} .
\end{tikzcd}
\end{equation}
\end{proposition}

\begin{proof}
See Appendix \ref{app:proofs}.
\end{proof}

Proposition \ref{prop:kernel_quotient} shows that $\ker(\partial_\ell)$ is the cokernel of \eqref{eq:kernel_relations}, but this is not a presentation since $\ker\begin{pmatrix}f_\ell^0 & p_{\ell\minus 1}^1\end{pmatrix}$ is not necessarily projective for general $P$.

To obtain a representation of the homology $\ker(\partial_\ell)/\im(\partial_{\ell+1})$, we have to construct a morphism representing $\partial_{\ell+1}\colon C_{\ell+1}\rightarrow \ker(\partial_\ell)$. The morphism $\partial_{\ell+1}$ is represented by $f_{\ell+1}^0$ on the generators but, as one can see in \eqref{eq:poset_tower_presentation} and \eqref{eq:kernel_example}, for the poset tower in Figure \ref{fig:HomologyExample}, $f_2^0$ does not map to the kernel of $\begin{pmatrix}f_1^0 & p_0^1\end{pmatrix}$. The problem is that $f_1^0\circ f_2^0\neq 0$ because the boundary of the triangle is only zero modulo relations. In other words, the morphisms $(f_\ell^0)_\ell$ on the generators do not form a chain complex. However, because they lift the differentials $(\partial_\ell)_\ell$, which compose to zero, they compose to zero up to homotopy. By Definition \ref{def:projective_resolution}(2), $f^0_{\ell}\circ f^0_{\ell+1}$ is a lift of $\partial_{\ell}\circ \partial_{\ell+1}=0$ to the generators. Proposition \ref{prop:existence_resolutions}(3) guarantees that we can extend these partial resolutions and lifts to full resolutions and lifts $f_\ell\circ f_{\ell+1}$. Obviously, the zero map is also a lift of $\partial_{\ell}\circ \partial_{\ell+1}=0$ to these resolutions. Proposition \ref{prop:existence_resolutions}(2) then implies that $f_\ell\circ f_{\ell+1}$ and $0$ are homotopic, which means that there exists a morphism $\vartheta_{\ell+1}\colon G_{\ell+1}\rightarrow R_{\ell\minus 1}$, illustrated by the dashed arrow in \eqref{eq:diagram_homology_from_resolutions}, such that $p_{\ell\minus 1}^1\circ \vartheta_{\ell+1}=f_\ell^0\circ f_{\ell+1}^0+0=f_\ell^0\circ f_{\ell+1}^0$. This motivates the definition of the following morphism:
\begin{equation}\label{eq:image_morphism}
\begin{pmatrix}f_{\ell+1}^0\\ \vartheta_{\ell+1}\end{pmatrix}\colon G_{\ell+1}\rightarrow \ker\begin{pmatrix}f_\ell^0 & p_{\ell\minus 1}^1\end{pmatrix}
\end{equation}
representing $\partial_{\ell+1}$. Again, the additional morphism $\vartheta_{\ell+1}$ is needed to make the map well-defined. Since 
\begin{equation*}
\begin{pmatrix}f_\ell^0 & p_{\ell\minus 1}^1\end{pmatrix}\circ\begin{pmatrix}f_{\ell+1}^0 \\ \vartheta_{\ell+1}\end{pmatrix}=f_\ell^0\circ f_{\ell+1}^0+p_{\ell\minus 1}^1\circ \vartheta_{\ell+1}=0 ,
\end{equation*}
we obtain a well-defined map to the kernel. In the running example (Figure~\ref{fig:HomologyExample}), this homotopy for $f_1^0\circ f_2^0$ can be chosen as
\begin{equation*}
\vartheta_{2}=
\begin{blockarray}{cc}
& g^{x_6}_{uvw} \\
\begin{block}{c(c)}
r^{x_3} & 1 \\
r^{x_4} & 0 \\
\end{block}
\end{blockarray}
\end{equation*}
sending the triangle to a relation annihilating its boundary. 
We now combine \eqref{eq:kernel_relations} and \eqref{eq:image_morphism} to represent $\ker(\partial_\ell)/\im(\partial_{\ell+1})$. Let $\pi\colon \ker(\partial_\ell)\rightarrow \ker(\partial_\ell)/\im(\partial_{\ell+1})$ be the projection to the cokernel.

\begin{proposition}\label{prop:homology_quotient}
The following sequence is exact:
\begin{equation*}
\begin{tikzcd}[ampersand replacement=\&,every label/.append style = {font = \small}]
0 \&[-5pt] \ker(\partial_\ell)/\im(\partial_{\ell+1}) \arrow[l] \&[25pt] \ker\begin{pmatrix}f_\ell^0 & p_{\ell\minus 1}^1\end{pmatrix} \arrow{l}[swap]{\pi\circ\begin{pmatrix}\alpha_\ell & 0\end{pmatrix}} \&[50pt] G_{\ell+1}\oplus R_\ell\oplus RR_{\ell\minus 1}  \arrow{l}[swap]{\begin{pmatrix}f_{\ell+1}^0 & p^1_{\ell} & 0 \\  \vartheta_{\ell+1} & f_{\ell}^1 & p^2_{\ell\minus 1}\end{pmatrix}}   .
\end{tikzcd}
\end{equation*}
\end{proposition}

\begin{proof}
See Appendix \ref{app:proofs}.
\end{proof}

Proposition \ref{prop:homology_quotient} implies that $\ker(\partial_\ell)/\im(\partial_{\ell+1})$ is isomorphic to the cokernel of the matrix built from \eqref{eq:kernel_relations} and \eqref{eq:image_morphism}. The exact sequence in Proposition \ref{prop:homology_quotient} is already a presentation of the homology $\ker(\partial_\ell)/\im(\partial_{\ell+1})$ if $\ker\begin{pmatrix}f_\ell^0 & p_{\ell\minus 1}^1\end{pmatrix}$ is projective. Again, note that this is not necessarily the case for general posets. In any case, we obtain a PiRep of the homology as stated in the following theorem.

\begin{theorem} \label{thm:pirep_main_result}
The following diagram
\begin{equation}\label{eq:projective_complex_homology}
\begin{tikzcd}[ampersand replacement=\&,every label/.append style = {font = \small}]
G_{\ell\minus 1} \&[25pt] G_{\ell}\oplus R_{\ell\minus 1} \arrow{l}[swap]{\begin{pmatrix}f_\ell^0 & p^1_{\ell\minus 1}\end{pmatrix}} \&[50pt] G_{\ell+1}\oplus R_\ell\oplus RR_{\ell\minus 1} \arrow{l}[swap] {\begin{pmatrix}f_{\ell+1}^0 & p^1_{\ell} & 0 \\  \vartheta_{\ell+1} & f^1_{\ell} & p^2_{\ell\minus 1} \end{pmatrix}}
\end{tikzcd}
\end{equation}
is a projective implicit representation of $\ker(\partial_\ell)/\im(\partial_{\ell+1})$.    
\end{theorem}

\begin{proof}
This follows directly from Proposition \ref{prop:homology_quotient}.
\end{proof}

In the case where $\ker\begin{pmatrix}f_\ell^0 & p_{\ell\minus 1}^1\end{pmatrix}$ is not projective, computing a presentation of the homology $\ker(\partial_\ell)/\im(\partial_{\ell+1})$ from a PiRep, as in \eqref{eq:projective_complex_homology}, requires the computation of a projective resolution of $\ker\begin{pmatrix}f_\ell^0 & p_{\ell\minus 1}^1\end{pmatrix}$. Over a general poset, this is not trivial but there are available algorithms for this task. For completeness, we will discuss this part in Appendix \ref{app:homology_presentation}.

In our example, we obtain the following PiRep of $H_1(K)$:
\begin{equation*}
\begin{tikzcd}[column sep=large,ampersand replacement=\&,every label/.append style = {font = \small}]
G_0 \&[110pt] G_1\oplus R_0 \arrow{l}[swap]{\begin{blockarray}{cccccc}
& g^{x_0}_{uw} & g^{x_1}_{uv} & g^{x_2}_{vw} & r^{x_3} & r^{x_4} \\
\begin{block}{c(ccccc)}
g^{x_0}_{u} & 1 & 1 & 0 & 0 & 0 \\
g^{x_0}_{w} & 1 & 0 & 1 & 0 & 0 \\
g^{x_1}_{v} & 0 & 1 & 0 & 1 & 1 \\
g^{x_2}_{v} & 0 & 0 & 1 & 1 & 1 \\    
\end{block}
\end{blockarray}} \&[80pt] G_2\oplus RR_0 \arrow{l}[swap]{\begin{blockarray}{cccccc}
& g^{x_6}_{uvw} & rr^{x_5} & rr^{x_6} \\
\begin{block}{c(ccccc)}
g^{x_0}_{uw} & 1 & 0 & 0 \\
g^{x_1}_{uv} & 1 & 0 & 0 \\
g^{x_2}_{vw} & 1 & 0 & 0 \\
r^{x_3} & 1 & 1 & 1 \\    
r^{x_4} & 0 & 1 & 1 \\   
\end{block}
\end{blockarray}}
\end{tikzcd}
\end{equation*}
In this case, the kernel of $\begin{pmatrix} f_1^0 & p^1_{0}\end{pmatrix}$ is already projective and, after a change of basis, denoted by $\cong$, we obtain the following presentation of the homology: 
\begin{equation*}\small
\begin{pmatrix}f_{2}^0 & p^1_{1} & 0 \\ \vartheta_2 & f_1^1 & p^2_{0}\end{pmatrix}\vert_{\ker\begin{pmatrix} f_1^0 & p^1_{0}\end{pmatrix}}=
\begin{blockarray}{cccc}
& g^{x_6}_{uvw} & rr^{x_5} & rr^{x_6} \\
\begin{block}{c(ccc)}
r^{x_3} & 1 & 1 & 1 \\
r^{x_4} & 0 & 1 & 1 \\
\end{block}
\end{blockarray}
\cong
\begin{blockarray}{cccc}
& rr^{x_5} & g^{x_6}_{uvw} & rr^{x_6} \\
\begin{block}{c(ccc)}
r^{x_3} & 1 & 1 & 0 \\
r^{x_4} & 1 & 0 & 1 \\
\end{block}
\end{blockarray}.
\end{equation*}
It is generated by the two triangular cycles appearing at $x_3$ and $x_4$ in Figure \ref{fig:HomologyExample} which get identified at $x_5$ and killed at $x_6$ by the triangle.

\section{PiRep algorithm}\label{sec:algorithm}

\subsection{Input, output, and data structures}

In this section, we describe an efficient algorithm to compute a PiRep of $H_\ell(K)$, as in Theorem \ref{thm:pirep_main_result}, from a poset tower $K\colon P\rightarrow\mathbf{SCpx}$. The PiRep consists of the two block matrices in \eqref{eq:projective_complex_homology}, whose entries come from projective resolutions of $C_\ell(K)$ up to the second term, lifts of the boundary maps $\partial_\ell$ up to the first term, and a correction map $\vartheta_{\ell+1}$, as depicted in \eqref{eq:diagram_homology_from_resolutions}.  Therefore, our algorithm has to compute matrix representations of the maps $p_\ell^1$, $p_\ell^2$ representing the resolutions of $C_\ell(K)$, $f_\ell^0$, $f_\ell^1$ representing the lifts of $\partial_\ell$, and $\vartheta_{\ell+1}$. Instead of performing an algebraic brute-force computation, we make essential use of the special structure of the modules $C_\ell(K)$ and morphisms $\partial_\ell$ to compute all these maps in a combinatorial fashion from the following representation of the poset tower $K$. \\ 

\noindent
\textbf{Input:} A poset tower $K\colon P\rightarrow \mathbf{SCpx}$ represented by:
\begin{itemize}
    \item The Hasse diagram of $P$ as a directed (transitively reduced) graph. 
    \item A list 
    $\mathcal{S}=\{g_{\sigma_1}^{x_1},\ldots,g_{\sigma_{n'}}^{x_{n'}}\}$ of simplex generators. 
    \item A list $\mathcal{C}=\{c^{y_1\prec z_1}_{v_{i_1}\mapsto v_{j_1}},\ldots, c^{y_{n''}\prec z_{n''}}_{v_{i_{n''}}\mapsto v_{j_{n''}}}\}$ of edge events. 
\end{itemize}
The complexity of the proposed algorithm is measured with respect to the input size, which includes (i) $t=t_0+t_1$, the number of vertices ($t_0$) and edges ($t_1$) of the graph representing $P$, (ii) $n=n'+n''$, the number of simplex generators in $\mathcal S$ plus the number of edge events in $\mathcal C$. Additionally, we assume that the maximal simplex dimension is constant. \\

\noindent
\textbf{Output:} $P$-graded sparse matrix representations (containing only the non-zero entries) of 
\begin{equation*}
\begin{pmatrix}f_\ell^0 & p^1_{\ell\minus 1}\end{pmatrix}  \hspace{5pt} \text{ and } \hspace{10pt} \begin{pmatrix}f_{\ell+1}^0 & p^1_{\ell} & 0 \\  \vartheta_{\ell+1} & f^1_{\ell} & p^2_{\ell\minus 1} \end{pmatrix}.
\end{equation*}

As already discussed in Section \ref{sec:pirep_theory}, the set of simplex generators $\mathcal{S}_\ell$ gives rise to a minimal set of generators of $C_\ell(K)$ allowing us to define $G_\ell\coloneqq\bigoplus_{g_\sigma^x\in\mathcal{S}_\ell}\proj[x]$ (see Proposition~\ref{prop:alpha_proj_cover}). Given this set of generators, we have to determine how they are related. A presentation of $C_\ell(K)$ based on $G_\ell$ can be built using the following three kinds of relations. The fact that these three types of relations are sufficient to form a complete presentation is a consequence of the proof of correctness of our algorithm in Section \ref{sec:p1correctness}, which constructs the presentation by adding exactly these relations.
\begin{enumerate}
    \item Two generators $g_\sigma^x$ and $g_\sigma^y$ of the same simplex $\sigma$ born at different grades can join at a grade $b>x,y$ where they must be identified by a relation $r^b\mapsto g_\sigma^x+g_\sigma^y$.
    \item An $\ell$-simplex $\sigma\in K(a)$ represented by a generator $g_\sigma^x$ can be collapsed into an $\ell$-simplex $\tau\in K(b)$ represented by a generator $g_\tau^y$ via a simplicial map such that $K(a<b)(\sigma)=\tau$. In this case, they also have to be identified by a relation $r^b\mapsto g_\sigma^x+g_\tau^y$.
    \item An $\ell$-simplex $\sigma\in K(a)$ represented by a generator $g_\sigma^x$ can be collapsed into a simplex $\tau\in K(b)$ of dimension $<\ell$ via a simplicial map such that $K(a<b)(\sigma)=\tau$. In this case, the generator has to be killed by a relation $r^b\mapsto g_\sigma^x$.
\end{enumerate}
To provide some intuition why generators in a poset tower are always related in pairs, we look at the example in \eqref{eq:pairwise_relation_example_1}, where three copies of the same simplex $\sigma$ join at $x_4$. 
\begin{equation} \label{eq:pairwise_relation_example_1}
\begin{tikzcd}
& x_4 & & & \sigma & & & \field\\
x_1\arrow[ur] & x_2 \arrow[u] & x_3 \arrow[ul]  & \sigma \arrow[ur] & \sigma \arrow[u] & \sigma \arrow[ul]  & \field \arrow[ur] & \field \arrow[u] & \field \arrow[ul] \\[-10pt]
& P & & & K & & & C_\ell(K)
\end{tikzcd}
\end{equation}
These three copies cannot be related by a single relation at $x_4$ because the presentation matrix in \eqref{eq:pairwise_relation_example_2} on the right would induce a two-dimensional cokernel at $x_4$. Instead, they have to be related in pairs as shown in \eqref{eq:pairwise_relation_example_2} on the left.
\begin{equation}\label{eq:pairwise_relation_example_2}
p_\ell^1=\begin{blockarray}{ccc}
& r^{x_4} & r^{x_4} \\
\begin{block}{c(cc)}
g^{x_1}_\sigma & 1 & 0 \\
g^{x_2}_\sigma & 1 & 1 \\
g^{x_3}_\sigma & 0 & 1 \\
\end{block}
\end{blockarray} 
\hspace{30pt} , \hspace{30pt}
p_\ell^1\neq\begin{blockarray}{cc}
& r^{x_4}  \\
\begin{block}{c(c)}
g^{x_1}_\sigma & 1 \\
g^{x_2}_\sigma & 1 \\
g^{x_3}_\sigma & 1 \\
\end{block}
\end{blockarray}
\end{equation}
The complexity of finding these relations arises because they depend on each other. Two generators $g_\sigma^x$ and $g_\sigma^y$ should not be identified at a common join if one of them is already killed by a relation at this point. In this case, they are different simplices. However, we also cannot just treat them as different simplices because they might have to be identified at some other join. Moreover, many generators can meet multiple times at multiple points along the poset tower. Therefore, we need an efficient method to keep track of which generators have to be related, which simplices are already related, and which simplices are already dead. 

\textbf{Graph structure.} Our key observation is that we can model the relations by a $\overline{P}$-filtered graph, where $\overline{P}=P\cup\{-\infty\}$ such that $-\infty<x$ for all $x\in P$. 
The generators in a poset tower always get related in pairs (Cases 1 and 2) except in the case of simplices getting totally collapsed (Case 3). But we can model the third case by a generator that gets related to a distinguished ``graveyard'' vertex $\Omega$ at grade $-\infty$. Together with this distinguished vertex $\Omega$, we obtain a $\overline{P}$-filtered graph $\mathcal{G}\colon \overline{P}\rightarrow\Delta\mathbf{Cpx}_{\leq 1}$ (Definition \ref{def:boundary_p}) whose other vertices are the generators $g_\sigma^x\in\mathcal{S}$, born at grade $x$, and whose edges are the relations $r^b\mapsto g_\sigma^x+g_\tau^y$ or $r^b\mapsto g_\sigma^x+\Omega$, born at grade $b$. In particular, each vertex and edge of the $\overline{P}$-filtered graph $\mathcal{G}$ is born at a unique grade in $\overline{P}$. Therefore, we can directly represent the boundary matrix of $\mathcal{G}$ by a $\overline{P}$-graded matrix. We formally define $\mathcal{G}$ as a one-dimensional $\overline{P}$-filtered $\Delta$-complex, because there can be multiple edges between two vertices. The following theorem is a direct consequence of the proof of correctness, in Section \ref{sec:correctness}, of the upcoming algorithm {\sc PiRep} (see Definition \ref{def:boundary_p} and Theorem \ref{thm:correct_minimal_presentation}).

\begin{theorem} \label{thm:graph_presentation_matrix}
Let $K\colon P\rightarrow \mathbf{SCpx}$ be a poset tower. Then there exists a $\overline{P}$-filtered graph $\mathcal{G}\colon \overline{P}\rightarrow\Delta\mathbf{Cpx}_{\leq 1}$, where each vertex and edge is born at a unique grade $x\in\overline{P}$, with $\overline{P}$-graded boundary matrix $\overline{p}^1_\ell$ such that the matrix $p^1_\ell$, obtained from $\overline{p}^1_\ell$ by removing the row corresponding to a distinguished vertex $\Omega$ at $-\infty$, is a minimal presentation of $C_\ell(K)$.
\end{theorem}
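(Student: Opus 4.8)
The plan is to construct the $P$-filtered graph $\mathcal{G}$ explicitly from the simplex generators $\mathcal{S}_\ell$ and edge events $\mathcal{C}$, and then verify that its boundary matrix, with the $\Omega$-row deleted, presents $C_\ell(K)$ minimally. First I would set the vertex set of $\mathcal{G}$ to be $\mathcal{S}_\ell \cup \{\Omega\}$, with each generator $g^x_\sigma$ born at grade $x$ and $\Omega$ born at the minimum (or formally at every grade). Then, following the three cases enumerated before the theorem, I would produce edges: for each pair of generators $g^x_\sigma, g^y_\sigma$ of the same $\ell$-simplex that become comparable in the image at a grade $b$ which is minimal with this property (and where neither is already dead), an edge $r^b \mapsto g^x_\sigma + g^y_\sigma$; for each edge event $c^{y\prec z}_{v\mapsto w} \in \mathcal{C}$ that identifies two distinct $\ell$-simplices $\sigma\mapsto\tau$ (via composition of faces), an edge $r^b \mapsto g^x_\sigma + g^{y'}_\tau$ at the appropriate grade $b$; and for each edge event collapsing an $\ell$-simplex onto something of lower dimension, an edge $r^b \mapsto g^x_\sigma + \Omega$. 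The key structural observation, which Proposition~\ref{prop:elementary_projective} makes precise, is that the boundary matrix $\overline{p}^1_\ell$ of such a $P$-filtered graph is automatically a $P$-graded matrix: a column for edge $r^b$ has exactly two nonzero entries, in the rows of its two endpoint-vertices, and these rows are labeled with grades $\leq b$, so the grading condition $y_j \not\leq x_i \Rightarrow \lambda_{ji}=0$ holds.

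Next I would show that $p^1_\ell$ (the $\Omega$-row removed) together with $\alpha_\ell\colon G_\ell \to C_\ell(K)$ forms a presentation, i.e.\ that the sequence $C_\ell(K) \xleftarrow{\alpha_\ell} G_\ell \xleftarrow{p^1_\ell} R_\ell$ is exact at $G_\ell$. Surjectivity of $\alpha_\ell$ is immediate from Definition~\ref{def:simplex_generators}: every $\ell$-simplex in $K(x)$ is the image of some generator born at a grade $y \prec x$, so $G_\ell = \bigoplus_{g^x_\sigma \in \mathcal{S}_\ell}\text{Proj}[x]$ surjects onto $C_\ell(K)$ at every grade. For exactness, I would argue grade by grade: an element of $\ker \alpha_\ell$ at grade $x$ is a $\mathbb{Z}_2$-combination of generators (alive at $x$) whose images in $C_\ell(K(x))$ cancel; such a combination must be a sum of "identification cycles" — generators of the same simplex-at-$x$ grouped together — and the deletion of the $\Omega$-row encodes exactly that a generator killed by a Case-3 relation contributes $0$ in $C_\ell$. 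The image of $p^1_\ell$ is spanned precisely by the relation-columns restricted to the non-$\Omega$ part, so one checks that these span $\ker\alpha_\ell$ at each grade. This is essentially the statement that the connected components of the graph $\mathcal{G}(x)$, after contracting the $\Omega$-component to zero, biject with the $\ell$-simplices of $K(x)$.

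For minimality, I would invoke Definition~\ref{def:minimal_resolution}: $G_\ell$ already has one summand per simplex generator, which is forced since $\mathcal{S}_\ell$ is a minimal generating set (each generator introduces a simplex not in the image of any predecessor), so $Q_0 = G_\ell$ is minimal. For $Q_1 = R_\ell$, I would show that each relation-column is not redundant — the "avoiding superfluous cycles" point mentioned in the introduction — namely that the chosen edges form a spanning forest of the relevant equivalence structure at every grade, so no relation-column lies in the span of the others together with the image of a later differential; equivalently, the graph construction never closes a cycle, hence $p^1_\ell$ has no column that can be eliminated by row/column operations respecting the grading. The main obstacle I anticipate is precisely this minimality-of-$R_\ell$ argument together with the delicate interdependence of the three relation types: one must show that a single careful (grade-sorted) construction of edges — identifying each pair exactly when it first must be identified and only if not already dead — yields a graph that is simultaneously a forest "up to $\Omega$" and captures all identifications, and that this is compatible across all incomparable branches of $P$ (as in Figure~\ref{fig:two_join_poset_indec}, where simplices agree in one branch and differ in another). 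Since the theorem is stated as a consequence of the algorithm's correctness proof in Section~\ref{sec:correctness}, the honest plan is to defer the detailed bookkeeping to that algorithm and here only extract the graph $\mathcal{G}$ and its boundary matrix from the algorithm's output, verifying the three properties (graph structure, presentation, minimality) on that output.
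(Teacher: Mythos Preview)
Your overall plan matches the paper's: the graph $\mathcal{G}$ is defined from the algorithm's output (Definition~\ref{def:boundary_p}), surjectivity of $\alpha_\ell$ and exactness at $G_\ell$ are proved grade by grade (Propositions~\ref{prop:alpha_epi}, \ref{prop:p_exact_1}, \ref{prop:p_exact_2}), and minimality is established separately (Theorem~\ref{thm:p1min}). Your concession at the end---to extract $\mathcal{G}$ from the algorithm and verify the three properties on its output---is exactly what the paper does.

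There is, however, a genuine misconception in your minimality argument that would cause the proof to fail. You assert that ``the chosen edges form a spanning forest of the relevant equivalence structure at every grade'' and that ``the graph construction never closes a cycle''. This is false: $\mathcal{G}(x)$ \emph{does} contain cycles in general. These cycles are precisely $\ker p^1_\ell(x)$, and they are what the relations-of-relations $RR_\ell$ encode; if your forest claim were true, $\ker p^1_\ell$ would vanish and the whole of Section~\ref{sec:p2correctness} would be vacuous. What the algorithm guarantees is only that no edge \emph{born at the current grade $x$} closes a cycle in the auxiliary graph $\mathcal{G}^x_{act}$. Cycles appear when spanning forests $\mathcal{F}^y_{act}$ from several incomparable predecessors $y\prec x$ are merged (Figure~\ref{fig:relrelgraph}), and the algorithm has no way to---and should not---prevent this.

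The correct minimality statement, proved in Proposition~\ref{prop:p1_proj_cover}, is that every element of $\ker p^1_\ell(x)$ is a combination of relations with grades \emph{strictly below} $x$, i.e.\ $\ker p^1_\ell\subseteq \text{Rad}(R_\ell)$. Minimality then follows from the projective-cover characterization (Definition~\ref{def:projective_cover}, Proposition~\ref{prop:minimal_eqq_projective_cover}), not from acyclicity. Concretely, the argument takes a cycle in $\mathcal{G}(x)$, replaces each edge not in $\mathcal{G}^x_{act}$ by a path through the spanning forest of some predecessor, obtains a cycle entirely inside $\mathcal{G}^x_{act}$, and then uses the algorithm's ``no new cycle at $x$'' invariant to conclude that no edge on the original cycle could have had grade $x$. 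You will need this radical-containment formulation (and likewise $\ker\alpha_\ell\subseteq\text{Rad}(G_\ell)$, Proposition~\ref{prop:alpha_proj_cover}) rather than a forest claim.
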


At every grade $x\in P$, the connected component of $\Omega$ in $\mathcal{G}(x)$ represents the simplices that already died. Each remaining connected component represents a simplex in $K(x)$. Moreover, the cycles in $\mathcal{G}$ represent the kernel of the presentation matrix $p^1_\ell$ which corresponds to the relations of the relations (see Definition \ref{def:projective_resolution}(1)). In the following algorithm {\sc PiRep}, we successively build the graph $\mathcal{G}$ by adding generators and relations between them in the form of vertices and edges. If a new relation is added that forms a cycle in $\mathcal{G}(x)$, then this new relation is a linear combination of other existing relations, which should not be added to the minimal presentation (see Definition \ref{def:projective_cover}). Nevertheless, a cycle that is generated by the inclusion of edges from predecessors implies a relation of relations.    

The special structure of $p_\ell^1$ is also crucial for the efficient computation of $f_\ell^1$ and $\vartheta_{\ell+1}$. These morphisms are determined by the equations $p^1_{\ell\minus 1}\circ f^1_{\ell}=f_\ell^0\circ p_\ell^1$ and $p_{\ell\minus 1}^1\circ \vartheta_{\ell+1}=f_\ell^0\circ f_{\ell+1}^0$, i.e., by $P$-graded linear systems as in Definition \ref{def:P_graded_system}. By Proposition \ref{prop:eliminate_constraints}, for a specific column of $f_\ell^1$, corresponding to a relation $r^x$, or a specific column of $\vartheta_{\ell+1}$, corresponding to a generator $g_\sigma^x$, these equations can be solved via a standard system of linear equations, where the coefficient matrix is the boundary matrix of $\mathcal{G}(x)$. Using this graph structure, we can solve these systems in linear time.   

\textbf{Matrices.} 
Our algorithm computes matrix representations of $p^1_\ell$, $p^2_\ell$, $f_\ell^0$, $f_\ell^1$, and $\vartheta_{\ell+1}$. All matrices are implemented with a sparse representation that contains only non-zero entries. As a result, the space and time complexities for computing and representing them may be less than their actual size given by the product of their row and column dimensions. While analyzing the time complexities of the algorithms, we assume that the matrices
are represented sparsely.

The rows of $p_\ell^1$ and the rows and columns of $f_\ell^0$ are associated to generators $g_\sigma^{x}$, i.e., we maintain a bijective map $I\rightarrow \{g_\sigma^{x}\}$ for the index set $I$ for rows. Similarly, the columns of $p_\ell^1$, the rows and columns of $f_\ell^1$, and the rows of $p_\ell^2$ are associated to relations $r^x$ and are indexed by a graded identifier. This identifier is the column index paired with the grade $x$ if the column is added at the grade $x$. In other words, denoting this identifier as $\rsf^x$, we maintain a bijective map $J\rightarrow \{\rsf^x\}$ for the column index set $J$. Therefore, we write $p_\ell^1[g_\sigma^x,\rsf^y]$ for $p_\ell^1[i,j]$ if $i\mapsto g_\sigma^x$ and $j\mapsto \rsf^y$. We use this notation to address entries of a matrix in our algorithm to simplify presentation even though the matrix is implemented with a sparse representation. The columns of the matrix $p^2_\ell$ are associated to relations of relations and are indexed by a graded global counter
$\rrsf$. A column gets the identifier $\rrsf^x$ if the column is generated at grade
$x$. We write $p^2_\ell[\rsf^x,\rrsf^y]$ in place of $p^2_\ell[i,j]$ if $i\mapsto \rsf^x$ and $j\mapsto \rrsf^y$.

In what follows, the algorithms handle all degrees $\ell\geq 0$ together and thus maintain single matrices $p^1$, $p^2$, $f^0$, $f^1$, and $\vartheta$ accounting for all degrees.

The main routine {\sc PiRep} computes a linear extension $x_0,\ldots, x_{t_0\minus 1}$
of the poset $P$. Then it iterates over each grade $x_i$ maintaining the invariant
that, for every degree $\ell\geq 0$, it has correctly computed matrix representations of the maps $p^1_\ell$, $p^2_\ell$, $f_\ell^0$, $f_\ell^1$, and $\vartheta_{\ell+1}$ restricted to the subposet induced by grades $x_0,\ldots, x_i$. We note that the choice of linear extension is arbitrary, and different choices may yield different output matrices. However, since the matrix representations of (even minimal) presentations and projective resolutions of modules are generally not unique, this behavior is expected. In Section \ref{sec:correctness}, we prove that any choice of linear extension produces a correct PiRep.

The algorithm maintains a set $L_{act}^{x_i}$ of active generators, i.e., a set of pairs $(\sigma,g_\tau^{y})$ such that the generator $g_\tau^y$ represents the simplex $\sigma$ at the grade $x_i$, and a forest $\F_{act}^{x_i}$
of active relations. The set $L_{act}^{x_i}$ helps to determine
the relations $R_\ell$ (columns of $p_\ell^1$) that may arise due to
vertex collapses handled by the subroutine {\sc Collapse} or by
identification of generators handled by the subroutine {\sc Generator}.
The forest $\F_{act}^{x_i}$, initialized with a special vertex $\Omega$, helps to determine superfluous relations
in $R_\ell$ and thus helps guarantee minimality of $p_\ell^1$ and also
to determine relations of relations $RR_\ell$ registered in the matrix $p_\ell^2$ by the subroutine {\sc RelRel}. The special vertex $\Omega$ 
plays the role of a ``graveyard'' vertex, which gets attached to any edge in the
relation graph generated by a ``vanishing'' simplex due to collapse.

The subroutines take the arguments in the parentheses as input and update the global matrices $p^1$, $p^2$,  $f^0$, $f^1$, and $\vartheta$ along with other global structures such as an active list $L_{act}^\bullet$ and
graphs $\mathcal{G}_{act}^\bullet$ and $\mathcal{F}_{act}^\bullet$ as part of their output.

\subsection{Algorithm}

\noindent
{\bf Algorithm {\sc PiRep}} ($P$, $\mathcal C$, $\mathcal S$)
\begin{itemize}
    \item Compute a linear extension $x_0,\ldots,x_{t_0\minus 1}$ of $P$;
    \item Initialize empty matrices $p^1$, $p^2$, $f^0$, $f^1$, $\vartheta$, and counters $\rsf=0$, $\rrsf=0$;
    \item For $i:=0$ to $t_0-1$ do 
    \begin{enumerate}
        \item Call {\sc Collapse}($P$, $\mathcal C$, $x_i$);
        \item Call {\sc Generator}($P$, $\mathcal S$, $x_i$);
        \item Call {\sc Lift}($x_i$);
        \item Call {\sc RelRel}($x_i$)
    \end{enumerate}
\end{itemize}

In the routine {\sc Collapse}, for every poset edge $y\prec x$, we handle collapsing a vertex $v$ to a vertex $w$. In step 1, we initialize an empty list of active generators $L_{act}^x$ which we keep associated with the grade $x$ for subsequent use. In step 2, we compute the union of forests
$\cup_{y\prec x}\F_{act}^{y}$ resulting in a graph $\G_{act}^{x}$
that may not be a forest. This graph (relation graph) is used and updated 
in the successive steps and routines (Figure~\ref{fig:relgraph}). In step 3(a), every simplex $\sigma\in K(y)$ 
gets its vertices updated either remaining the same ($v\mapsto v$)
or renamed to $w$ ($v\mapsto w$); e.g. in Figure~\ref{fig:posettower}, the edges
$uv, uw$ and $vw$ at $x_4$ become $uw, uw$ and $ww$ at $x_5$, respectively.

If a simplex $\sigma$ in $K(y)$ after vertex renaming contains none or
only one of $v$ and $w$, it is included in $L_{act}^{x}$ with the appropriate generator (step 3(b.i));
e.g. $uv$ from $x_4$ becoming $uw$ at $x_5$ is added to $L_{act}^{x_5}$ with generator $g_{uv}^{x_1}$. Similarly, the edge $uw$ from $x_4$ remaining $uw$ at $x_5$ is added
with the generator $g_{uw}^{x_2}$. Now, $L_{act}^{x_5}$ has two copies of
$uw$ with two different generators. They are identified by the routine {\sc Generator}.

Otherwise, in step 3(b.ii), the vanishing simplex $\sigma$, such as the edge $vw$ becoming $ww$, is not included in $L_{act}^{x}$; instead, a relation is generated for $p^1$ unless the addition of this relation to the graph $\G_{act}^{x_i}$ generates a cycle. For the vanishing edge $vw$, we add the edge $e^{x_5}_{(\Omega,g_{vw}^{x_3})}$ to the graph $\G^{x_5}_{act}$
which does not create a cycle (Figure~\ref{fig:relgraph} (bottom right)). Therefore, we create
a column indexed by $\rsf^{x_5}$ for the relation $r_1^{x_5}\mapsto g_{vw}^{x_3}$ setting
$p^1[g_{vw}^{x_3},\rsf^{x_5}]=1$.

\begin{figure}[htbp]
\centerline{\includegraphics[width=\textwidth]{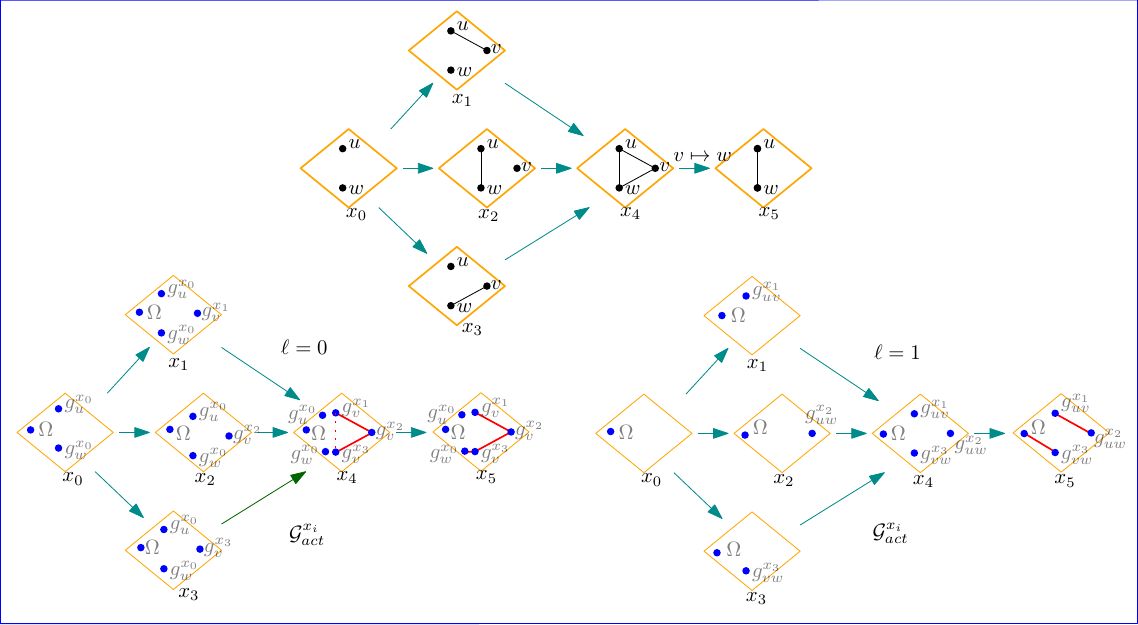}}
\caption{(Figure~\ref{fig:posettower} example) Relation graphs $\G^{x_i}_{act}$ for vertices ($\ell=0$) on left and for edges ($\ell=1$) on right.}
\label{fig:relgraph}
\end{figure}

\vspace{0.1in}
\noindent
{\bf Algorithm {\sc Collapse}} ($P$, $\mathcal C$, $x$)
\begin{enumerate}
\item  Initialize $L_{act}^x\coloneqq\emptyset$ and keep it associated with grade $x$; 
\item If $x$ is not minimal in $P$, compute $\G_{act}^{x}:= \cup_{y\prec x} \F_{act}^y$, else initialize $\G_{act}^{x}$ with vertex $\Omega$; 
\item For all $y\prec x$ do
        \begin{enumerate}
            \item Compute $K(y\prec x)$ as $v\mapsto w$ if $c_{v\mapsto w}^{y\prec x}\in \mathcal{C}$ and as $v\mapsto v$ else; 
            \item For all $(\sigma, g_\rho^z)\in L^y_{act}$ do
            \begin{enumerate}
                \item If $K(y\prec x)(\sigma)$ has no repeated vertices
                \begin{itemize}
                    \item Add $\big(K(y\prec x)(\sigma),g_\rho^z\big)$ to $L_{act}^{x}$;
                \end{itemize}
                \item Otherwise, if adding $e^{x}_{(\Omega,g_\rho^{z})}$ to $\G_{act}^{x}$ does not make a cycle
                 \begin{itemize}
                    \item Add edge $e^{x}_{(\Omega,g_\rho^{z})}$ to $\G_{act}^{x}$;
                    \item Add a column with index $\rsf^x$ in $p^1$ with $p^1[g_\rho^{z},\rsf^{x}]=1$; $\rsf\texttt{++}$
                 \end{itemize}
            \end{enumerate}
        \end{enumerate}
\end{enumerate}

In the routine {\sc Generator}, we add a pair $(\sigma, g_\sigma^x)$ to the active set
$L_{act}^{x}$ if $\sigma$ has a generator at the current grade $x$. 
After adding a new generator $g_\sigma^x$, we record its boundary map in $f^0$ by calling the routine {\sc Boundary} (step 1). In {\sc Boundary}, we find active generators for the boundary of $\sigma$ and map the new generator of $\sigma$ to these boundary generators. Because we add simplices in order of increasing simplex dimension and because each simplex is either generated at grade $x$ or is the image of a simplex in a predecessor, at this point, each boundary simplex has at least one active generator.
If the active set contains a simplex $\sigma$ with two different generators, they are
identified by a relation. If this relation does not
create a cycle in the relation graph $\G_{act}^{x}$, we add it to both
$\G_{act}^{x}$ and $p^1$. Otherwise, we ignore it as a superfluous relation.

Again, consider the example in Figure~\ref{fig:posettower}. First, consider
the case for edges ($\ell=1$). At $x_5$, we have $(uw,g_{uv}^{x_1})$ and
$(uw,g_{uw}^{x_2})$ in $L_{act}^{x_5}$ from the routine {\sc Collapse}.
We identify the two copies of $uw$, and add the edge $e^{x_5}_{(g_{uv}^{x_1}, g_{uw}^{x_2})}$
to the graph $\G^{x_5}_{act}$ whose addition does not create a cycle; see Figure~\ref{fig:relgraph} (bottom right). Therefore, we
generate a relation $r_1^{x_5}\mapsto g_{uv}^{x_1}+g_{uw}^{x_2}$ (step 2(a)), and delete, say, $(uw,g_{uw}^{x_2})$
(step 2(b)). A column for $r_1^{x_5}$ is added to $p^1$.

Next, consider the case for vertices ($\ell=0$). At $x_4$, three
generators $g_v^{x_1}$, $g_v^{x_2}$, and $g_v^{x_3}$ of $v$ generate the pairs
$(v,g_v^{x_1}), (v,g_v^{x_2})$, and $(v,g_v^{x_3})$ in $L_{act}^{x_4}$ after the
{\sc Collapse} call. Step 2 potentially creates three relations $r_1^{x_4}\mapsto g_v^{x_1}+g_v^{x_2}$, $r_2^{x_4}\mapsto g_v^{x_2}+g_v^{x_3}$, and
$r_3^{x_4}\mapsto g_v^{x_1}+g_v^{x_3}$. Assuming that edges for relations
$r_1^{x_4}$ and $r_2^{x_4}$ are added first in $\G_{act}^{x_4}$ (Figure~\ref{fig:relgraph} (bottom left)), the edge for $r_3^{x_4}$ (shown dotted) creates a cycle. So, it is not added to $\G^{x_4}_{act}$, and equivalently
only columns for relations $r_1^{x_4}$ and $r_2^{x_4}$ are added to $p^1$.

At the end (step 3), we compute a spanning forest $\F_{act}^{x}$ of $\G_{act}^{x}$
which we keep associated with the grade $x$ for subsequent use. 
In Figure~\ref{fig:relgraph} (bottom left), $\F^{x}_{act}$ happens to be the
same as $\G^{x}_{act}$ for every $x$. However, it is possible that, at a grade $x$, relation forests from predecessors come together to form a cycle, see e.g. grade $x_4$ in Figure~\ref{fig:relrelgraph}. In this case, the relation forest and the
relation graph would be different and the edges ($e^{x_4}_{(g_v^{y_1},g_v^{y_2})},
e^{x_4}_{(g_v^{y_2},\Omega)})$ not included in the forest
would give rise to relations of relations, adding two columns to $p^2$.

\begin{figure}[htbp]
\centerline{\includegraphics[width=0.5\textwidth]{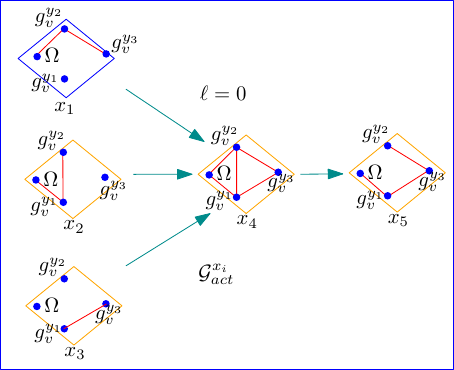}}
\caption{Relation graphs $\G^{x_i}_{act}$ for vertices ($\ell=0$); poset tower not shown; the relation forest $\F_{act}^{x_4}$, which
is passed to $x_5$, is a proper subgraph of $\G^{x_4}_{act}$.}
\label{fig:relrelgraph}
\end{figure}

\vspace{0.1in}
\noindent
{\bf Algorithm {\sc Generator}} ($P$, $\mathcal S$, $x$)
\begin{enumerate}
    \item For all $g_\sigma^x\in \mathcal{S}$ (in increasing order of simplex dimension) do
    \begin{itemize}
        \item Add $(\sigma,g_{\sigma}^x)$ to $L_{act}^{x}$;
        \item Add vertex
    $g_\sigma^{x}$ to $\G_{act}^{x}$;
        \item Call {\sc Boundary}($x$, $\sigma$);
    \end{itemize} 
    \item  For every ($(\sigma,g_\tau^y), (\sigma,g_\rho^z)\in L_{act}^{x}$) and ($g_\tau^y \neq g_\rho^z$) do
            \begin{enumerate}
                \item If adding edge $e^{x}_{(g_\tau^y,g_\rho^z)}$ to $\G_{act}^{x}$
                does not create a cycle
                \begin{itemize}
                    \item Add edge $e^{x}_{(g_\tau^y,g_\rho^z)}$ to $\G_{act}^{x}$;
                    \item Add column $\rsf^{x}$ in $p^1$ with $p^1[g_\tau^{y},\rsf^{x}]=1$ and
                $p^1[g_\rho^{z},\rsf^{x}]=1$; $\rsf\texttt{++}$;
                \end{itemize}
                \item Delete $(\sigma,g_\rho^z)$ from $L_{act}^{x}$;
            \end{enumerate}
    \item Compute a spanning forest $\F_{act}^{x}$ of $\G_{act}^{x}$ and keep it associated with grade $x$
\end{enumerate}

\noindent
{\bf Algorithm {\sc Boundary}} ($x$, $\sigma$)
\begin{enumerate}
    \item Find $(\tau_1,g_{\kappa_1}^{y_1}),\ldots,(\tau_k,g_{\kappa_k}^{y_k})$ in $L_{act}^{x}$ s.t. $\sum_{i=1}^k \tau_i=\partial \sigma$;
    \item Add column $g_\sigma^x$ in $f^0$ with $f^0[g_{\kappa_j}^{y_j},g_\sigma^{x}]=1$ $\forall j\in \{1,\ldots, k\}$
\end{enumerate}

In the routine {\sc Lift}, we compute the matrices $f^1$ and $\vartheta$, by lifting the maps $f^0\circ p^1$ and $f^0\circ f^0$ along $p^1$. Let $A[:,\text{ind}]$ denote the column of a matrix $A$ with the column index $\text{ind}$. Then the lifting is carried out by solving the equations $p^1\circ f^1[:,\rsf^{x}]=f^0\circ p^1[:,\rsf^{x}]$ and $p^1\circ\vartheta[:,g_\sigma^x]=f^0\circ f^0[:,g_\sigma^x]$ at the current grade $x$, for all unknown columns $f^1[:,\rsf^{x}]$ and $\vartheta[:,g_\sigma^x]$, indexed by $\rsf^x$ and $g_\sigma^x$, respectively. Here $p^1\circ f^1[:,\rsf^{x}]$ denotes the column of the product matrix $p^1\circ f^1$ indexed by $\rsf^x$. The columns of the coefficient matrix $p^1$ and the rows of the right-hand side $f^0\circ p^1$ and $f^0\circ f^0$ correspond to the edges and vertices in the total relation graph $\mathcal{G}$ associated to $p^1$, respectively (see Theorem \ref{thm:graph_presentation_matrix}). By Proposition \ref{prop:eliminate_constraints}, to solve the equations, we only have to consider those columns and rows that are indexed by a grade less than or equal to $x$, which correspond to the edges and vertices of $\mathcal{G}(x)$. Moreover, it is enough to consider only the columns corresponding to the edges in a spanning forest of $\mathcal{G}(x)$, since these columns already yield the full column rank. The algorithm maintains such a spanning forest $\mathcal{F}_{act}^x$ (see Lemma \ref{prop:spanning_forest}). For each $\rsf^{x}$, it first computes the right-hand side $b[\mathcal{F}^x_{act}\setminus\Omega]$ and $b[\Omega]\coloneqq\overline{1}\circ f^0\circ p^1[:,\rsf^{x}]$, where $b[\mathcal{F}^x_{act}\setminus\Omega]$ is a vector indexed as the rows of $f^0\circ p^1$, $\overline{1}=\begin{pmatrix}1 & \cdots & 1\end{pmatrix}$, and $b[\Omega]$ is an additional row that accounts for the additional vertex $\Omega$ in the relation graph. Then, for each connected component $C$ of $\mathcal{F}_{act}^x$ and right-hand side $b[V(C)]$ restricted to the vertices $V(C)$ in $C$, it calls the routine {\sc TreeSolve} (see details in Appendix \ref{app:tree_solve}) to solve for $f^1[E(C),\rsf^x]$ restricted to the edges $E(C)$ in $C$ in linear time. The computation for $\vartheta$ is analogous.

\vspace{0.1in}
\noindent
{\bf Algorithm {\sc Lift}} ($x$)
\begin{enumerate}
    \item For each $\rsf^{x}$ added at grade $x$ do
    \begin{enumerate}
        \item Compute $b[\mathcal{F}^x_{act}\setminus\Omega]\coloneqq f^0\circ p^1[:,\rsf^{x}]$ and $b[\Omega]\coloneqq\overline{1}\circ f^0\circ p^1[:,\rsf^{x}]$;
        \item For each connected component $C$ of $\mathcal{F}^x_{act}$ do 
        \begin{itemize}
            \item Set $f^1[E(C),\rsf^{x}]\coloneqq${\sc TreeSolve}($C,b[V(C)]$);
        \end{itemize}

    \end{enumerate}
    \item For each $g^x_\sigma$ added at grade $x$ do
    \begin{enumerate}
        \item Compute $b[\mathcal{F}^x_{act}\setminus\Omega]\coloneqq f^0\circ f^0[:,g_\sigma^x]$ and $b[\Omega]\coloneqq\overline{1}\circ f^0\circ f^0[:,g_\sigma^x]$;
        \item For each connected component $C$ of $\mathcal{F}^x_{act}$ do 
        \begin{itemize}
            \item Set $\vartheta[E(C),g_\sigma^x]\coloneqq${\sc TreeSolve}($C,b[V(C)]$)
        \end{itemize}
    \end{enumerate}
\end{enumerate}

The routine {\sc RelRel} creates relations of relations adding columns to
the matrix $p^2$. It uses the spanning forest $\F_{act}^{x}$ of
the relation graph $\G_{act}^{x}$ available
at this stage. For every edge in $\G_{act}^{x}$ that is not in
$\F_{act}^{x}$, it computes a cycle, say $\gamma$, and
registers a relation of relations using the edges on $\gamma$. It is registered as a column
in $p^2$. For example, in Figure~\ref{fig:relrelgraph}, the two edges
$e^{x_4}_{(g_v^{y_1},g_v^{y_2})},
e^{x_4}_{(g_v^{y_2},\Omega)}$ add two columns $\rrsf_1^{x_4}$ and $\rrsf_2^{x_4}$
to $p^2$.
Finally, there is an option to call {\sc Reduce} which minimizes $p^2$.

\vspace{0.1in}
\noindent
{\bf Algorithm {\sc RelRel}} ($x$)
\begin{itemize}
      \item For every edge $e_*^{z_0}\in  \G_{act}^{x}\setminus \F_{act}^{x}$ do
        \begin{enumerate}
            \item Find the path of edges $e_*^{z_1},\ldots, e_*^{z_s}$ in $\F_{act}^{x}$
            connecting the vertices of $e_*^{z_0}$;
            \item Add a relation of relations $\rrsf^{x}$ in $p^2$ where
            $p^2[\rsf^{z_j},\rrsf^{x}]=1$ $\forall j\in\{0,\ldots,s\}$; $\rrsf\texttt{++}$;
        \end{enumerate}
        \item (Optional) Call {\sc Reduce} ($x$)
\end{itemize}

The routine {\sc Reduce} removes redundant relations of relations by left-to-right column reduction of an appropriate submatrix of $p^2$. This makes $p^2$ minimal as we prove in Proposition~\ref{prop:proj_cover_p2}.

\vspace{0.1in}
\noindent
{\bf Algorithm {\sc Reduce}} ($x$)
\begin{enumerate}
    \item Compute the submatrix $A^{\leq x}$ of $p^2$ consisting of columns with indices
    $\rrsf^y$ such that $y\leq x$ and the column order 
    respects the chosen linear extension of grades;
    \item Reduce $A^{\leq x}$ (with left-to-right column additions as done in standard persistent homology)
    such that each column either has
    a unique row as a pivot or is completely zeroed out;
    \item Remove a column with index $\rrsf^{x}$ from $p^2$ if the column is reduced
    to a zero column in the reduced matrix $A^{\leq x}$
\end{enumerate}

\subsection{Time complexity analysis} \label{subsec:complexity_analysis}

We analyze the time complexity of the algorithm {\sc PiRep}. Let $t_0$ and $t_1$ be the total number of vertices (grades) and edges, respectively, in the directed acyclic graph (Hasse diagram) representing the poset $P$ and let $t=t_0+t_1$. Let $n$ denote the number $|\Scal|$ of generators plus the number $|\Ccal|$ of edge events input to the algorithm. 
Thus, the maximum number of simplices
in the complex $K(x)$ is $O(n)$ for any $x$. The maximum dimension
$d$ of a simplex is assumed to be constant for this analysis. For this analysis, we denote by $w_x$ the number of $y$ in $P$ such that $y\prec x$ and note that $\sum_{x\in P}w_x=t_1$.

We first analyze the maximal size of the data structures $L_{act}^x$, $\mathcal{F}_{act}^x$, and $G_{act}^x$ at a given grade $x$. At a minimal element in $P$, the size of $L_{act}^x$ is clearly $O(n)$ because we add at most $O(n)$ simplex generators to it in {\sc Generator}. For each $y\prec x$, $L_{act}^y$ contains at most one pair $(\sigma,g_\tau^y)$ for each $\sigma \in K(y)$ at the end of {\sc Generator} because we remove all but one such pair. Hence, at the end of {\sc Generator} $L_{act}^y$ is of size $O(n)$. This implies that {\sc Collapse} adds at most $O(w_xn)$ elements and {\sc Generator} adds at most $O(n)$ elements to $L_{act}^x$. Therefore, the maximal size of $L_{act}^x$ at any point in the algorithm is $O\big((w_x+1)n\big)$.

The set of vertices of $\mathcal{F}_{act}^x$ always corresponds to a subset of the set of simplex generators $\Scal$, so it has at most $O(n)$ vertices. Thus, the forest $\mathcal{F}_{act}^x$ is of size $O(n)$ at any point of the algorithm. The graph $\mathcal{G}_{act}^x$ is first initialized in {\sc Collapse} as the union of all $\mathcal{F}_{act}^y$ with $y\prec x$. Thus, it has size $O(w_x n)$ after initialization. Also, as there are at most $O(n)$ vertices overall, $\mathcal{G}_{act}^x$ contains at most $O(n)$ vertices throughout. In particular, $\mathcal{G}_{act}^x$ has at most $O(n)$ components. Because in {\sc Collapse} and {\sc Generator} we only add edges if they do not create a cycle, each added edge must connect two components. This implies that we can add at most $O(n)$ edges at each grade $x$ and thus the graph $\mathcal{G}_{act}^x$ has size $O\big((w_x+1)n\big)$ at any point of the algorithm.

\vspace{2pt}
{\bf Analysis of {\sc Collapse}.}
First assume that $w_x=0$, i.e., the grade $x$ has no predecessors. In this case, we only initialize $\mathcal{G}_{act}^x$ with the vertex $\Omega$, which can be done in constant time. Over all grades $x\in P$ such that $w_x=0$ this takes at most $O(t_0)$ time. Notice that the poset $\overline{P}=P\cup\{-\infty\}$ has a single minimum at $\{-\infty\}$ where the vertex $\Omega$ is born, but our algorithm directly operates on $P$. Thus, we have to initialize a vertex $\Omega$ at each minimal element of $P$, which are the only grades where $w_x=0$.

Now assume that $w_x>0$. Step 1 clearly takes constant time. In step 2, we construct $\mathcal{G}_{act}^x$ as a union of $w_x$ forests $\mathcal{F}_{act}^y$ of size $O(n)$. Since we have a fixed set of vertices, corresponding to the simplex generators, this can be done in $O(w_xn)$ time. 

For a given $y\prec x$, let $n_y\leq n$ denote the number of simplices in $K(y)$ and $n_{y\prec x}$ denote the number of edge events on the poset edge $y\prec x$. Step 3(a) can be done using a dictionary (implemented with a simple array of vertex ids) for $y\prec x$ initialized as $v\mapsto v$ for all vertices in $L^y_{act}$, which has length at most $n_y$. Then we go over at most $n_{y\prec x}$ edge events and set $v\mapsto w$ for $c_{v\mapsto w}^{y\prec x}$. In step 3(b), we update at most $n_y$ simplices using this dictionary, which can be done in constant time because we assume the maximum dimension $d$ of a simplex to be a constant. The same is true for checking whether there is a repeated vertex in step 3(b.i).

For step 3(b.ii), we can use a union-find data structure~\cite{CLRbook} to check efficiently
whether an added edge creates a cycle in the graph $\G^x_{act}$. Such a union-find data structure for the components of $\mathcal{G}_{act}^x$ can be initialized in $O\big(nw_x\alpha(n)\big)$ time (once per grade $x$), where $\alpha(\cdot)$ is the 
extremely slowly growing inverse Ackermann function. Over all grades this adds a total cost of $O\big(t_1n\alpha(n)\big)$. Note that if there are no predecessors we do not need the union-find structure. With this union-find data structure, we can check if an edge creates a cycle in $O\big(\alpha(n)\big)$ time. An edge creates a cycle if and only if the two endpoints are already in the same component. If they are in different components, the edge does not create a cycle and the two components are merged. Therefore, step (3) takes at most $O\big(n_y+n_{y\prec x}+n_y\alpha(n)\big)=O\big(n\alpha(n)\big)$ time for each $y\prec x$. Over all grades $x\in P$ such that $w_x>0$ and all $w_x$ predecessors $y\prec x$ this results in a worst case complexity of $\sum_x O\big(w_x n\alpha(n)\big)=O\big(nt_1\alpha(n)\big)$. Hence, overall {\sc Collapse} has a worst-case complexity of $O\big(nt_1\alpha(n)+t_0\big)$.   

\vspace{2pt}
{\bf Analysis of {\sc Generator}.}
Step 1 in the routine {\sc Generator} accesses the generators in $\Scal$ and their
boundaries. Finding the simplex generators born at grade $x$ and adding them to $L_{act}^x$ takes at most $O(n)$ time and we add at most $O(n)$ generators. When adding these new generators, we also update the union-find data structure for the components of $\mathcal{G}_{act}^x$, initialized in {\sc Collapse}, which can be done in constant time, since each generator added at this point forms a new component. For each added generator, finding active generators of its boundary can be done in $O(d)$ time, assuming that simplices and their boundaries are maintained
with an appropriate pointer data structure. Assuming $d$ to be a constant, we get a total
complexity of this step as $O(n)$ per grade and $O(nt_0)$ over all grades. 

In step 2, we identify all pairs $(\sigma,g^y_\tau)$ and $(\sigma,g_\rho^z)$ with the same simplex \texttt{id}. This can be implemented efficiently as follows: We maintain $L_{act}^{x}$ with an array of lists containing all pairs $(\sigma, g_\tau^y)$ indexed by the unique \texttt{id} of the simplex $\sigma$. Then for each simplex $\sigma$ that appears in that array, we keep the first list entry $(\sigma,g^y_\tau)$ indexed by $\sigma$ and remove all following $(\sigma,g_\rho^z)$. For each removed $(\sigma,g_\rho^z)$, we have to check if adding the edge $e^{x}_{(g_\tau^y,g_\rho^z)}$ creates a cycle in $\mathcal{G}_{act}^x$. As discussed in the analysis of {\sc Collapse}, this can be done in $O\big(\alpha(n)\big)$ time using the union-find data structure on the components of $\mathcal{G}_{act}^x$. We have already analyzed that $L_{act}^x$ is of size at most $O\big((w_x+1)n\big)$. Therefore, we have to check for at most $O\big((w_x+1)n\big)$ many $(\sigma,g_\rho^z)$. Hence, step (2) takes $O\big((w_x+1)n\alpha(n)\big)$ time per grade and $O\big(nt\alpha(n)\big)$ time after summing over all grades. 

In step (3), we compute a spanning forest $\mathcal{F}_{act}^x$ of $\mathcal{G}_{act}^x$. As we have analyzed above, the graph $\mathcal{G}_{act}^x$ is of size $O\big((w_x+1)n\big)$. Computing a spanning forest can be done in $O(\vert\text{Vertices}\vert+\vert\text{Edges}\vert)$ using breadth-first or depth-first search. Thus, step (3) costs $O\big( (w_x+1)n\big)$ per grade and $O(nt)$ over all grades. Therefore, overall the routine {\sc Generator} has a worst-case time complexity of $O\big(nt\alpha(n)\big)$. 

Note that the matrix $p^1$ gets at most $O(n)$ columns at every grade $x$
because they correspond to the edges of the forest $\F^{x}_{act}$ (we only add edges that connect two components). Therefore, overall we add at most $O(nt_0)$ columns over all grades and $p^1$ has at most $O(nt_0)$ columns. Each column of $p^1$ has at most two entries in sparse matrix representation. Thus, the overall time (and memory) cost of creating a representation of $p^1$ is $O(nt_0)$. 

\vspace{2pt}
{\bf Analysis of {\sc Lift}.} In step $1(a)$, we first compute $f^0\circ p^1[:,\rsf^x]$. Since each column of $p^1$ has at most two entries and each column of $f^0$ has at most $d+1$ entries, computing this product is equivalent to an XOR of two columns of $f^0$ in sparse matrix format and takes constant time ($d$ is assumed to be constant). Moreover,  $f^0\circ p^1[:,\rsf^x]$ has at most $2(d+1)$ entries, which implies that computing $\overline{1}\circ f^0\circ p^1[:,\rsf^x]$, i.e., summing up all entries, can also be done in constant time. Let $n_C$ denote the size of the connected component $C$ of $\mathcal{F}_{act}^x$. Since $\mathcal{F}_{act}^x$ is of size $O(n)$, we have $O(\sum_{C} n_C)=O(n)$. By Theorem \ref{prop:tree_solve} in Appendix \ref{app:tree_solve}, the call of {\sc TreeSolve} on $C$ costs $O(n_C)$ time. Thus, overall step $1(b)$ takes time $O(n)$. We execute step $1$ exactly once for each $\rsf^x$ added at grade $x$. Since there are at most $O(nt_0)$ relations, the total cost of step $1$ over all grades $x\in P$ is $O(n^2t_0)$. For step $2(a)$, we again note that $f^0$ has at most $d+1$ entries. Thus, step $2(a)$ can also be done in constant time. The complexity of step $2(b)$ is $O(n)$ by the same argument as for step $1(b)$. We execute step $2$ exactly once for every generator $g_\sigma^x$ added by the algorithm at grade $x$. Since there are at most $O(n)$ generators, the total cost of step $2$ over all grades $x\in P$ is $O(n^2)$. Thus, in total {\sc Lift} has a worst-case time complexity of $O(n^2t_0)$.

\vspace{2pt}
{\bf Analysis of {\sc RelRel}.} Steps 1 and 2 take at most $O(n)$ time per edge
$e_*^{z_0}$ since $\F_{act}^{x}$ has size $O(n)$. We have that $\G^{x}_{act}\setminus \mathcal{F}_{act}^x$ is of size $O(w_xn)$ because after initialization $\mathcal{G}_{act}^x$ is of size $O(nw_x)$ and every edge added afterwards to $\mathcal{G}_{act}^x$ is in $\mathcal{F}_{act}^x$. Counting over all edges of $\G^{x}_{act}\setminus \mathcal{F}_{act}^x$, we get a complexity of $O(n^2w_x)$. Then, over all grades, we get a bound of $O(\sum_x n^2w_x )=O(n^2t_1)$.
Notice that, at every grade $x$, we add at most $O(w_xn)$ columns to $p^2$. Therefore, $p^2$ has at most $O(nt_1)$ columns. Because each column of $p^2$ corresponds to the relations/edges on a path in the forest $\mathcal{F}_{act}^x$ plus one additional edge closing the cycle, it has at most $O(n)$ entries. Thus, the overall time (and memory) cost of creating a representation of $p^2$ is $O(n^2t_1)$.

\vspace{2pt}
{\bf Analysis of {\sc Reduce}.} The most costly step in {\sc Reduce} is the matrix
reduction. This can be done in $O(s^\omega)$ time 
if the matrix has $O(s)$ columns and rows, where $\omega< 2.373$ is the exponent for the
matrix multiplication time complexity. By the analysis of {\sc RelRel}, the matrix $A^{\leq x}$ being a submatrix
of $p^2$, cannot have more than $O(nt_1)$ columns. Moreover, since the entries in the columns of $p^2$ correspond to relations (columns of $p^1$), in non-sparse representation $p^2$ has at most $O(nt_0)$ rows (see analysis of {\sc Generator}). Then, {\sc Reduce} takes $O\big(n^\omega t^\omega)$ time per grade and $O\big(n^\omega t^{\omega+1})$ time for all grades.

The proof of correctness in Section \ref{sec:correctness} together with Theorem \ref{thm:pirep_main_result} establish that the algorithm {\sc PiRep} computes a projective implicit representation of $H_\ell(K)$. It also establishes that $p^1_\ell$ is a minimal presentation (Theorem \ref{thm:p1min}), that $p^2_\ell$ without the call of {\sc Reduce} is asymptotically minimal (Proposition \ref{prop:asymptotic_optimality}), and that $p^2_\ell$ with the call of {\sc Reduce} is minimal (Theorem \ref{thm:minimal_resolution}). We summarize this in our main theorem:

\begin{theorem} \label{thm:algorithm_complexity}
Let $K\colon P\rightarrow \mathbf{SCpx}$ be a poset tower represented by the Hasse diagram of $P$ with $t_0$ nodes and $t_1$ edges, together with the lists $\mathcal S$ and $\mathcal C$ of simplex generators and edge events. Set $t=t_0+t_1$ and $n=|\mathcal S|+|\mathcal C|$. Then the following statements hold for all $\ell$, where (1) uses only {\sc Collapse} and {\sc Generator}, (2) additionally uses {\sc RelRel}, (3) additionally uses  {\sc Reduce}, and (4) refers to the full algorithm without {\sc Reduce}.  \vspace{1pt}
\begin{enumerate}
\item {\sc PiRep} computes a minimal presentation matrix $p_\ell^1$ for $C_\ell(K)$ with $O(nt_0)$ columns in $O\big(nt\alpha(n)\big)$ time. \vspace{1pt}

\item {\sc PiRep} computes matrices $p_\ell^1$ and $p_\ell^2$, representing a projective resolution of $C_\ell(K)$ up to the second term, in $O(n^2t)$ time. The computed matrix $p_\ell^2$ has $O(nt_1)$ columns, which is asymptotically minimal because a minimal $p_\ell^2$ can have $\Theta(nt_1)$ columns. \vspace{1pt}

\item {\sc PiRep} computes a minimal $p_\ell^2$ in $O\big(n^\omega t^{\omega+1})$ time. \vspace{1pt}

\item {\sc PiRep} computes a projective implicit representation of $H_\ell(K)$ in $O(n^2t)$ time.
\end{enumerate}
\end{theorem}

\section{Correctness of the algorithm \textsc{PiRep}} \label{sec:correctness}

In this section, we prove the correctness of the {\sc PiRep} algorithm, hereafter referred to as ``the algorithm'' for brevity. The overall proof strategy is to show that the algorithm correctly computes matrix representations of the maps $p_\ell^1$, $p_\ell^2$, $f_\ell^0$, $f_\ell^1$, and $\vartheta_{\ell+1}$, shown in the diagram \eqref{eq:diagram_homology_from_resolutions}. This means that $p_\ell^1$ and $p_\ell^2$ form a partial resolution of $C_\ell(K)$ up to the second term, $f_\ell^0$ and $f_\ell^1$ form a partial lift of $\partial_\ell$ up to the first term, and $\vartheta_{\ell+1}$ satisfies $p_{\ell\minus 1}^1\circ \vartheta_{\ell+1}=f_\ell^0\circ f_{\ell+1}^0$. Assembling these maps into the block matrices in \eqref{eq:projective_complex_homology} then yields a PiRep of $H_\ell(K)$ by Theorem \ref{thm:pirep_main_result}. In Section \ref{sec:p1correctness}, we show that $p^1_\ell$, as constructed by {\sc PiRep}, is a valid presentation and that $f_\ell^0$ is a valid lift of $\partial_\ell$ to the generators. In Section \ref{sec:p1minimality}, we show that the constructed $p_\ell^1$ is a minimal presentation. In Section \ref{sec:p2correctness}, we show that $p_\ell^1$ and $p_\ell^2$ together form a partial resolution up to the second term. Moreover, we show that $p_\ell^2$ is asymptotically minimal and even minimal if {\sc Reduce} is called in {\sc PiRep}. In Section \ref{subsec:graph_equations}, we show that the maps $f_\ell^1$ and $\vartheta_{\ell+1}$ satisfy the required commutativity relations. 

\subsection{Correctness of computing $p_\ell^1$ and $f_\ell^0$}
\label{sec:p1correctness}

In this section, we prove that $p^1_\ell$, as constructed in the algorithm {\sc PiRep}, for a poset tower $K\colon P\rightarrow\mathbf{SCpx}$, is a presentation of $C_\ell(K)$. While the algorithm processes all degrees $\ell$ simultaneously, for the proof we will w.l.o.g. fix a degree $\ell$. Also, we abbreviate $C_\ell(K)$ by $C_\ell$. We note that the algorithm implicitly constructs a set of generators $g_\sigma^x$ and relations $r^y$ as the indices of the rows and columns of the matrices $p_\ell^1$, respectively. Hence, using the notation established in Section \ref{subsec:projective_modules}, the domain and codomain of the maps of projective modules $p_\ell^1\colon R_\ell\rightarrow G_\ell$, represented by the matrices, are given by $R_\ell= \langle r^y\vert r^y\in \text{ column indices of }p_\ell^1 \rangle$ and $G_\ell= \langle g_{\sigma}^x\vert g_\sigma^x\in \text{ row indices of }p_\ell^1 \rangle$. We start by showing that $G_\ell$ generates $C_\ell$.  

\begin{proposition} \label{prop:correct_generators}
For $K\colon P\rightarrow\mathbf{SCpx}$, let $G_\ell= \langle g_{\sigma}^x\vert g_\sigma^x\in \text{ row indices of }p_\ell^1 \rangle$ be the codomain of $p_\ell^1$ constructed by the algorithm. Then 
\begin{equation*}
G_\ell=\bigoplus_{g^x_\sigma\in\mathcal{S}_\ell}\proj[x] . 
\end{equation*}
\end{proposition}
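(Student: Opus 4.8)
The plan is to unwind how $G_\ell$ is produced inside the algorithm and match it, elementary summand by elementary summand, with the right-hand side; the equality is essentially a bookkeeping statement, so the real work is in making the bookkeeping precise and then, for the surrounding claim, in checking that this set of generators is the correct (minimal) one for $C_\ell$.

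First I would pin down what $G_\ell$ means operationally: it is the projective module whose elementary summands are recorded by the degree-$\ell$ rows of the matrix $p^1$, a row appended while processing grade $x$ contributing a summand $\text{Proj}[x]$; this is exactly the interpretation of the row/column grade labels of a $P$-graded matrix underlying Proposition~\ref{prop:elementary_projective}. Hence it suffices to exhibit a grade-preserving bijection between the degree-$\ell$ rows of $p^1$ and the set $\mathcal{S}_\ell$. Inspecting the pseudocode, the only place a row is ever appended to $p^1$ is step~1 of {\sc Generator}$(P,\mathcal{S},x)$, which for each $g_\sigma^x\in\mathcal{S}$ with $\dim\sigma=\ell$ appends precisely one row indexed by $g_\sigma^{x}$ and does nothing else to the rows of $p^1$. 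Since {\sc Presentation} iterates over a linear extension $x_0,\ldots,x_{t_0}$ visiting each grade exactly once, and since every $g_\sigma^x\in\mathcal{S}_\ell$ carries a unique grade $x$, every element of $\mathcal{S}_\ell$ contributes exactly one row and no other rows are created. Reading off the grade of each row gives the bijection, and summing the associated $\text{Proj}[x]$ yields $G_\ell=\bigoplus_{g_\sigma^x\in\mathcal{S}_\ell}\text{Proj}[x]$.

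To connect this with the opening remark that $G_\ell$ generates $C_\ell$ minimally, I would additionally identify $\mathcal{S}_\ell$ with a minimal generating set of the persistence module $C_\ell$. Viewing $C_\ell$ as a module over the incidence algebra of the finite poset $P$, its radical is given at each grade $x$ by $\operatorname{rad}(C_\ell)(x)=\sum_{y<x}\im C_\ell(K(y\le x))=\sum_{y\prec x}\im C_\ell(K(y\le x))$, the two sums agreeing by functoriality and finiteness of $P$ since every $y<x$ factors through some predecessor $y\prec x$. By the explicit description of the simplicial chain maps, this subspace is the span of those $\ell$-simplices of $K(x)$ lying in $\bigcup_{y\prec x}\im K(y\prec x)$, so $C_\ell(x)/\operatorname{rad}(C_\ell)(x)$ has basis $K_\ell(x)\setminus\bigcup_{y\prec x}\im K(y\prec x)$ — exactly the $\ell$-simplex generators born at $x$. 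Nakayama's lemma then gives that a minimal generating set of $C_\ell$ consists of one lift per such simplex, i.e.\ is indexed by $\mathcal{S}_\ell$, and that the minimal projective cover of $C_\ell$ is $\bigoplus_{g_\sigma^x\in\mathcal{S}_\ell}\text{Proj}[x]=G_\ell$.

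The statement is close to definitional, so I do not expect a deep obstacle; the only points that need care are (i) being explicit about the identification of the abstract module $G_\ell$ with the row data of $p^1$, and (ii) the radical computation — in particular the collapse of ``images from all $y<x$'' to ``images from predecessors $y\prec x$'', together with the observation that every $\ell$-simplex of $K(x)$ in the image of some $K(y\prec x)$ is already the image of an $\ell$-simplex of $K(y)$ (take the face of a preimage spanned by $\ell+1$ vertices mapping to the $\ell+1$ vertices of the image), which is what makes the quotient basis come out precisely as the generator set.
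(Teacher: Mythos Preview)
Your proof of the proposition itself is correct and follows the same idea as the paper: rows of $p^1$ in degree $\ell$ are created only in step~1 of {\sc Generator}, once per element of $\mathcal{S}_\ell$, at the correct grade. The paper's proof is a one-line version of your first paragraph.

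Your second paragraph, on minimality of $G_\ell$ as a generating module for $C_\ell$, goes beyond what the proposition asserts. The paper defers this to separate results: surjectivity of $\alpha_\ell$ is Proposition~\ref{prop:alpha_epi}, and minimality is established later as Proposition~\ref{prop:alpha_proj_cover} via the projective-cover criterion $\ker\alpha_\ell\subseteq\operatorname{Rad}(G_\ell)$. Your Nakayama-style argument, computing $C_\ell/\operatorname{rad}(C_\ell)$ directly, is a legitimate alternative and arrives at the same conclusion from the other side (working with the radical of $C_\ell$ rather than of $G_\ell$); the two approaches are essentially dual formulations of the same fact. Your observation that an $\ell$-simplex in $\operatorname{im}K(y\prec x)$ is already the image of an $\ell$-simplex of $K(y)$ is the key point making the quotient basis come out right, and it is correct.
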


\begin{proof}
The algorithm adds every generator $g^x_\sigma\in\mathcal{S}_\ell$ at the grade $x\in P$ in the routine {\sc Generator}.    
\end{proof}

At every grade $x\in P$, the vector space $G_\ell(x)$ has a basis $\{g_\sigma^y\in \mathcal{S}_\ell\vert y\leq x\}$ where $g^y_\sigma$ is the contribution of the summand $\proj[y]$. The vector space $C_\ell(x)$ has a basis generated by the $\ell$-simplices $K_\ell(x)$. We denote the basis elements of $C_\ell(x)$ simply by $\sigma$.

\begin{definition} \label{def:alpha}
Define the morphism $\alpha_\ell\colon G_\ell\rightarrow C_\ell$ in the following way:
\begin{equation*}
\alpha_\ell(x)(g^y_\sigma)=\begin{cases} K(y\leq x)(\sigma) & \mbox{ if } K(y\leq x)(\sigma)\in K_\ell(x) \\ 0 &   \text{ otherwise} \end{cases}.
\end{equation*}   
\end{definition}

In the following, we will use the convention $K(y\leq x)(\sigma)=0$, for $\sigma\in K_\ell(y)$, if $K(y\leq x)(\sigma)\notin K_\ell(x)$. Also note that $g^y_\sigma\in\mathcal{S}_\ell$ implies $\sigma\in K_\ell(y)$.

\begin{proposition} \label{prop:alpha_well_defined}
The morphism $\alpha_\ell$ is well-defined.
\end{proposition}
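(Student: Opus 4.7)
The statement breaks into two checks: (i) for every $x\in P$, the map $\alpha_\ell(x)\colon G_\ell(x)\rightarrow C_\ell(x)$ is a well-defined linear map, and (ii) the collection $\{\alpha_\ell(x)\}_{x\in P}$ is natural in $x$, i.e.\ the squares
\begin{equation*}
\begin{tikzcd}
G_\ell(y) \arrow[r,"\alpha_\ell(y)"] \arrow[d,swap,"G_\ell(y\leq x)"] & C_\ell(y) \arrow[d,"C_\ell\big(K(y\leq x)\big)"] \\
G_\ell(x) \arrow[r,"\alpha_\ell(x)"] & C_\ell(x)
\end{tikzcd}
\end{equation*}
commute for every $y\leq x$ in $P$. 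Part (i) is essentially definitional: by Proposition~\ref{prop:correct_generators}, $G_\ell(x)$ has basis $\{g_\sigma^z\in\Scal_\ell\mid z\leq x\}$, and Definition~\ref{def:alpha} specifies $\alpha_\ell(x)$ on exactly this basis, so extending $\mathbb{Z}_2$-linearly gives a well-defined map into $C_\ell(x)$.

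For part (ii), I would evaluate both compositions on an arbitrary basis element $g_\sigma^z\in G_\ell(y)$ (so $z\leq y$) and verify they agree. The structure map $G_\ell(y\leq x)$ sends this element to the corresponding basis element $g_\sigma^z\in G_\ell(x)$, so the top-right path yields $C_\ell\big(K(y\leq x)\big)\big(\alpha_\ell(y)(g_\sigma^z)\big)$ and the left-bottom path yields $\alpha_\ell(x)(g_\sigma^z)$. Using the convention $K(u\leq v)(\tau)=0$ whenever $K(u\leq v)(\tau)\notin K_\ell(v)$ (i.e.\ whenever the dimension drops), both paths compute to $K(z\leq x)(\sigma)$. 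The key algebraic input is the functoriality identity $K(z\leq x) = K(y\leq x)\circ K(z\leq y)$, which is exactly what is needed to rewrite $\alpha_\ell(x)(g_\sigma^z)=K(z\leq x)(\sigma)$ as $K(y\leq x)\big(K(z\leq y)(\sigma)\big)$.

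The one subtlety — and, I expect, the only real obstacle — is that the definition of $\alpha_\ell$ involves a case split depending on whether the image of $\sigma$ lies in $K_\ell$ at the respective grade. One has to check that the two convention-based zero values introduced along the top-right path (one for $\alpha_\ell(y)$, one for $C_\ell(K(y\leq x))$) are consistent with the single convention-based zero along the left-bottom path (for $\alpha_\ell(x)$). This reduces to the basic observation that simplicial maps are dimension non-increasing: if $K(z\leq y)(\sigma)\notin K_\ell(y)$, i.e.\ some vertices of $\sigma$ are already identified at grade $y$, then the image at any grade $x\geq y$ has at most as many distinct vertices, hence $K(z\leq x)(\sigma)\notin K_\ell(x)$ as well, so both paths contribute $0$. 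In the remaining case $K(z\leq y)(\sigma)\in K_\ell(y)$, functoriality produces equal non-zero outputs exactly when $K(y\leq x)$ keeps the image $\ell$-dimensional, and equal zero outputs otherwise. Once this case analysis is recorded cleanly, naturality follows and $\alpha_\ell$ is a genuine morphism in $\mathbf{Vec}^P$.
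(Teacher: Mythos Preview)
Your proposal is correct and follows essentially the same approach as the paper: both verify naturality by evaluating on basis elements $g_\sigma^z$ and invoking functoriality $K(z\leq x)=K(y\leq x)\circ K(z\leq y)$. The paper's proof is terser because it relies on the standing convention $K(y\leq x)(\sigma)=0$ when the dimension drops, whereas you spell out the case analysis explicitly; your added observation that simplicial maps are dimension non-increasing is exactly the hidden justification for that convention being compatible with functoriality.
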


\begin{proof}
Let $x\leq y\in P$ and $g^z_\sigma\in \mathcal{S}_\ell$ such that $z\leq x$. Then 
\begin{equation*}
\begin{aligned}
C_\ell(x\leq y)\circ \alpha_\ell(x)(g^z_\sigma)&=C_\ell(x\leq y)\big(K(z\leq x)(\sigma)\big) \\
&=K(x\leq y)\circ K(z\leq x)(\sigma)\\&=K(z\leq y)(\sigma).    
\end{aligned}
\end{equation*}
On the other hand, 
\begin{equation*}
\alpha_\ell(y)\circ G_\ell(x\leq y)(g^z_\sigma)=\alpha_\ell(y)(g^z_\sigma)=K(z\leq y)(\sigma).    
\end{equation*}
Hence, the diagram
\begin{equation*}
\begin{tikzcd}
C_\ell(y) & G_\ell(y) \arrow[l,swap,"\alpha_\ell(y)"] \\
C_\ell(x) \arrow[u,"C_\ell(x\leq y)"] & G_\ell(x)  \arrow[l,swap,"\alpha_\ell(x)"] \arrow[u,swap,"G_\ell(x\leq y)"]
\end{tikzcd}
\end{equation*}
commutes and $\alpha_\ell$ is well-defined.    
\end{proof}

\begin{proposition} \label{prop:alpha_epi}
The morphism $\alpha_\ell$ is an epimorphism.
\end{proposition}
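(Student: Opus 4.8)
The plan is to show that $\alpha_\ell$ is surjective grade by grade, i.e.\ that $\alpha_\ell(x)\colon G_\ell(x)\to C_\ell(x)$ is an epimorphism of vector spaces for every $x\in P$. Since $C_\ell(x)$ has as a basis the set $K_\ell(x)$ of $\ell$-simplices in $K(x)$, it suffices to show that every $\sigma\in K_\ell(x)$ lies in the image of $\alpha_\ell(x)$. By Definition~\ref{def:simplex_generators}, for any $\sigma\in K_\ell(x)$ there is either a generator $g^x_\sigma\in\mathcal{S}_\ell$ (if $\sigma\notin\operatorname{im}K(y\prec x)$ for all $y\prec x$), or $\sigma\in\operatorname{im}K(y\prec x)$ for some predecessor $y\prec x$; iterating the latter case and using that $P$ is finite, one reaches a grade $z\leq x$ and a simplex $\tau\in K_\ell(z)$ with $g^z_\tau\in\mathcal{S}_\ell$ and $K(z\leq x)(\tau)=\sigma$.

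The core of the argument is therefore a downward induction on the grade $x$ along a linear extension of $P$ (equivalently, an induction on the position of $x$): every $\sigma\in K_\ell(x)$ is the image under some $K(z\leq x)$ of a simplex carrying a generator. Concretely, if $\sigma$ itself is ``new'' at $x$, then $g^x_\sigma\in\mathcal{S}_\ell$ and $\alpha_\ell(x)(g^x_\sigma)=K(x\leq x)(\sigma)=\sigma$. Otherwise $\sigma=K(y\prec x)(\sigma')$ for some $y\prec x$ and $\sigma'\in K_\ell(y)$; note $\sigma'$ is genuinely an $\ell$-simplex (not collapsed), since its image $\sigma$ is. By the inductive hypothesis applied at $y$, there is $g^z_\tau\in\mathcal{S}_\ell$ with $z\leq y$ and $K(z\leq y)(\tau)=\sigma'$, hence $K(z\leq x)(\tau)=K(y\prec x)\circ K(z\leq y)(\tau)=\sigma$, and since $\sigma\in K_\ell(x)$ the case distinction in Definition~\ref{def:alpha} gives $\alpha_\ell(x)(g^z_\tau)=K(z\leq x)(\tau)=\sigma$. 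Thus every basis element of $C_\ell(x)$ is hit, and $\alpha_\ell(x)$ is surjective. As this holds for all $x$, $\alpha_\ell$ is an epimorphism of persistence modules (epimorphisms in $\mathbf{Vec}^P$ are detected pointwise).

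The only mild subtlety—and the step I would be most careful about—is the bookkeeping in the induction: one must make sure that the predecessor simplex $\sigma'$ is itself still an $\ell$-simplex so that the inductive hypothesis at $y$ applies to it, and that the generator $g^z_\tau$ produced at $y$ actually maps to $\sigma$ (not $0$) under $\alpha_\ell(x)$, which is exactly the condition $K(z\leq x)(\tau)=\sigma\in K_\ell(x)$ verified above. Everything else is a direct consequence of functoriality of $K$ and the definitions of $\mathcal{S}_\ell$ and $\alpha_\ell$; no genuinely hard obstacle is expected.
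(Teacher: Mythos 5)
Your proof is correct and follows essentially the same strategy as the paper: argue pointwise, trace each $\sigma\in K_\ell(x)$ back along predecessors to a simplex carrying a generator $g^z_\tau\in\mathcal{S}_\ell$, and observe that $\alpha_\ell(x)(g^z_\tau)=K(z\leq x)(\tau)=\sigma$. The only cosmetic difference is that you phrase the backtracking as a formal induction along a linear extension, whereas the paper iterates the same step and justifies termination by noting that one reaches a minimal element of $P$ in finitely many steps.
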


\begin{proof}
Let $\sigma\in K_\ell(x)$. If $\sigma\notin \bigcup_{y\prec x}\im\!\big( K(y\prec x)\big)$, then $g^x_\sigma\in\mathcal{S}_\ell$ and $\alpha_\ell(x)(g^x_\sigma)=\sigma$. Otherwise, $\sigma=K(y\prec x)(\tau)$ for some $\tau\in K_\ell(y)$. If $\tau$ is generated at $y$, there is $g^y_\tau$ in $\mathcal{S}_\ell$, otherwise we can iterate this argument until we find $\rho\in K_\ell(z)$ such that $g^z_\rho\in\mathcal{S}_\ell$ and $K(z\leq x)(\rho)=\sigma$. Note that this works because we reach a minimal element $a\in P$ after a finite number of steps and, by definition, every $\pi\in K_\ell(a)$ is a generator $g^a_\pi\in \mathcal{S}_\ell$. This implies
\begin{equation*}
\begin{aligned}
\alpha_\ell(x)(g^z_\rho)&=\alpha_\ell(x)\circ G_\ell(z\leq x)(g^z_\rho)\\&=C_\ell(z\leq x)\circ \alpha_\ell(z)(g^z_\rho) \\ &=C_\ell(z\leq x)(\rho)\\&=K(z\leq x)(\rho)=\sigma .
\end{aligned}    
\end{equation*}
Since $C_\ell(x)$ has a basis $K_\ell(x)$, $\alpha_\ell(x)$ is an epimorphism for all $x\in P$ and, thus, $\alpha_\ell$ is an epimorphism.    
\end{proof}

Proposition \ref{prop:alpha_epi} shows that $G_\ell$ does indeed generate $C_\ell$. The following propositions establish that $p_\ell^1$ is a presentation of $C_\ell$, a result we summarize in Theorem \ref{thm:correct_presentation}. To prove them, we make use of the following lemmas. 

\begin{lemma} \label{lem:act_gen_simp}
At every grade $x_i$ in the algorithm {\sc PiRep}, after the call of {\sc Generator}, $L^{x_i}_{act}\cong K(x_i)$ as sets. In particular, for each $\sigma\in K(x_i)$, there exists an active generator $(\sigma,g_\tau^y)\in L_{act}^{x_i}$.
\end{lemma}

\begin{proof}
We proceed by induction on the linear extension of the grades in $P$. The grade $x_0$ is minimal in $P$ as any predecessor would have to come before $x_0$ in the linear extension. At a minimal element $x_0\in P$, for every $\sigma\in K(x_0)$ there is a $g_\sigma^{x_0}\in\mathcal{S}$. Therefore, the identification $(\sigma,g^{x_0}_\sigma)\sim \sigma$ yields $L_{act}^{x_0}\cong K(x_0)$.

Assume that $L_{act}^{x_j}\cong K(x_j)$ for all $0\leq j < i$. For every $k$-simplex $\sigma\in \bigcup_{y\prec x_i}\im \!\big(K(y\prec x_i)\big)$, there exists a $y\prec x_i$ and an $l$-simplex $\tau\in K(y)$, with $l\geq k$, such that $K(y\prec x_i)(\tau)=\sigma$. By dropping superfluous vertices from $\tau$, we can always find a $k$-simplex $\tau'<\tau\in K(y)$ such that $K(y\prec x_i)(\tau')=\sigma$. Thus, w.l.o.g.\ we can assume $l=k$ and no vertex of $\tau$ gets collapsed by $K(y\prec x_i)$.  By assumption, there exists $(\tau,g^z_\rho)$ in $L^y_{act}$. By calling {\sc Collapse}, the algorithm adds $(K(y\prec x_i)(\tau),g^z_\rho\big)=(\sigma,g^z_\rho)$ to $L^{x_i}_{act}$. For every $\sigma\in K(x_i)\setminus \bigcup_{y\prec x_i}\im \!\big(K(y\prec x_i)\big)$ there exists $g^{x_i}_\sigma\in\mathcal{S}$ which is added as $(\sigma,g^{x_i}_{\sigma})$ to $L_{act}^{x_i}$ in step 1 of {\sc Generator}. Thus, every $\sigma\in K(x_i)$ has at least one representative in $L_{act}^{x_i}$. In step 2 of {\sc Generator}, $(\sigma,g^y_\tau)$ is deleted if there is another element $(\sigma,g^z_\rho)$ in $L_{act}^{x_i}$. Therefore, after this step, there is exactly one representative $(\sigma,g^z_\rho)$ for every $\sigma\in K(x_i)$ and $L_{act}^{x_i}\cong K(x_i)$.
\end{proof}

\begin{lemma} \label{lemma:algo_1}
Suppose that after the call of the routines {\sc Collapse} and {\sc Generator} in {\sc PiRep}, $(\tau,g^y_\sigma)\in L_{act}^x$ is active at $x\in P$. Then $K(y\leq x)(\sigma)=\tau$. 
\end{lemma}

\begin{proof}
Since $g^y_\sigma\in\mathcal{S}$, we have that $(\sigma,g^y_\sigma)$ is added to $L_{act}^y$ by {\sc Generator} at $y\in P$. Since active simplices are added from $\mathcal{S}$ by {\sc Generator} or inherited from predecessors by {\sc Collapse}, $(\tau,g^y_\sigma)$ can only be active at $x$ if $y\leq x$ and if there is a path $y=z_0\prec\cdots \prec z_k=x$ in $P$ along which $g^y_\sigma$ is active as $(\rho_i,g^y_\sigma)$ at $z_i$ with $\rho_0=\sigma$ and $\rho_k=\tau$. By the update procedure of the {\sc Collapse} routine in {\sc PiRep}, $\rho_i=K(z_{i\minus 1}\prec z_i)(\rho_{i\minus 1})$. Therefore, $K(y\leq x)(\sigma)=K(z_{k\minus 1}\prec z_k)\circ\cdots\circ K(z_0\prec z_1)(\rho_0)=\rho_k=\tau$.      
\end{proof}

Let $r^{y_1},\ldots,r^{y_n}$ be the relations added by the algorithm {\sc PiRep} for the $\ell$-simplices. Then we have $R_\ell=\bigoplus_{i=1}^n\proj[y_i]$. As for the generators $G_\ell$, at each grade $x\in P$ the vector space $R_\ell(x)$ has a basis $\{r^{y}\vert y\leq x\}$.   

\begin{proposition} \label{prop:p_exact_1}
Let $G_\ell\xleftarrow{p_\ell^1}R_\ell$ be the matrix for the $\ell$-simplices constructed by the {\sc PiRep} algorithm. Then $\alpha_\ell\circ p_\ell^1=0$.
\end{proposition}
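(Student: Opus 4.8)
The plan is to reduce the identity $\alpha_\ell\circ p_\ell^1=0$ to a finite check, one per relation generator, and then to dispatch the three kinds of relations listed before the algorithm using Proposition~\ref{lemma:algo_1} as the essential input.

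\emph{Step 1 (reduction to birth grades).} Since $R_\ell=\bigoplus_i\text{Proj}[y_i]$ and both $\alpha_\ell$ and $p_\ell^1$ are morphisms of persistence modules, I would first observe that it is enough to prove $\alpha_\ell(y_i)\big(p_\ell^1(y_i)(r^{y_i})\big)=0$ for every relation generator $r^{y_i}$ evaluated at its birth grade $y_i$. Indeed, for an arbitrary $x\in P$ the vector space $R_\ell(x)$ is spanned by the elements $R_\ell(y_i\le x)(r^{y_i})$ with $y_i\le x$, and naturality gives $(\alpha_\ell\circ p_\ell^1)(x)\big(R_\ell(y_i\le x)(r^{y_i})\big)=C_\ell(y_i\le x)\big((\alpha_\ell\circ p_\ell^1)(y_i)(r^{y_i})\big)$, which vanishes once the birth-grade statement is known.

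\emph{Step 2 (case analysis on how the column was created).} A column of $p^1$ is emitted at exactly two places in the algorithm. (i) In {\sc Generator}, step 2(a): a column $\rsf^x$ records an \emph{identification relation} $r^x\mapsto g_\tau^y+g_\rho^z$, produced from two pairs $(\sigma,g_\tau^y),(\sigma,g_\rho^z)\in L_{act}^{x}$ that carry the same active $\ell$-simplex $\sigma\in K_\ell(x)$ with $g_\tau^y\ne g_\rho^z$. Applying Proposition~\ref{lemma:algo_1} to these two pairs yields $K(y\le x)(\tau)=\sigma=K(z\le x)(\rho)$, so Definition~\ref{def:alpha} gives $\alpha_\ell(x)(g_\tau^y)=\sigma=\alpha_\ell(x)(g_\rho^z)$ (using $\sigma\in K_\ell(x)$), whence $\alpha_\ell(x)\big(p_\ell^1(x)(r^x)\big)=\sigma+\sigma=0$ over $\mathbb{Z}_2$. (ii) In {\sc Collapse}, step 2(b.ii): a column $\rsf^x$ records a \emph{death relation} $r^x\mapsto g_\rho^z$, produced from a pair $(\sigma,g_\rho^z)\in L_{act}^{y}$ over an edge $y\prec x$ for which $K(y\prec x)(\sigma)$ has a repeated vertex. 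Proposition~\ref{lemma:algo_1} gives $K(z\le y)(\rho)=\sigma$, so $K(z\le x)(\rho)=K(y\prec x)\big(K(z\le y)(\rho)\big)=K(y\prec x)(\sigma)$ is degenerate and hence not an element of $K_\ell(x)$; by Definition~\ref{def:alpha} (and the convention stated after it) $\alpha_\ell(x)(g_\rho^z)=0$, so $\alpha_\ell(x)\big(p_\ell^1(x)(r^x)\big)=0$. Combining (i) and (ii) with Step 1 finishes the argument; cases (i) and (ii) correspond to Cases 1, 2 and Case 3 of the relations listed before the algorithm.

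\emph{Main obstacle.} The only delicate point I anticipate is keeping the bookkeeping honest: one must carefully distinguish the simplex $\tau$ (resp.\ $\rho$) at which a generator $g_\tau^y$ (resp.\ $g_\rho^z$) is born from the simplex it currently represents in $L_{act}^{x}$, and one must make sure that Proposition~\ref{lemma:algo_1} is genuinely applicable to \emph{both} partners of an identification relation even though one of them is deleted from $L_{act}^{x}$ in step 2(b) right after the column is emitted — the conclusion of the lemma depends only on the active configuration at the moment of identification, so this is harmless. Everything else is a direct unwinding of Definition~\ref{def:alpha}.
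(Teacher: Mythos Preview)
Your proposal is correct and follows essentially the same approach as the paper: a case split on the two places where a column of $p_\ell^1$ is emitted ({\sc Generator} vs.\ {\sc Collapse}), with Proposition~\ref{lemma:algo_1} supplying the key identity $K(z\le x)(\rho)=\sigma$ in each case. Your Step~1 reduction to birth grades via naturality is a clean simplification that the paper does not make---the paper instead verifies $\alpha_\ell(x)\circ p_\ell^1(x)(r^y)=0$ directly for every $x\ge y$, carrying the extra factor $K(y\le x)$ through both cases---but the underlying computation is the same, and your handling of the ``deleted partner'' subtlety in the {\sc Generator} case is accurate.
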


\begin{proof}
First, let $r^y\in R_\ell$ be a relation, added by the {\sc Collapse} routine, such that $p_\ell^1(r^y)=g^z_\sigma$. The algorithm adds the relation at $y\in P$ because there exists $a\prec y\in P$ such that $(\tau,g^z_\sigma)$ is active at $a$ but $K(a\prec y)(\tau)=0$. For $y\leq x\in P$ we obtain:
\begin{equation*}
\alpha_\ell(x)\circ p_\ell^1(x)(r^y)=\alpha_\ell(x)(g^z_\sigma)=K(z\leq x)(\sigma).    
\end{equation*}
Since $(\tau,g^z_\sigma)$ is active at $a$, Lemma~\ref{lemma:algo_1} implies that $z\leq a$ and 
\begin{equation*}
\begin{aligned}
K(z\leq x)(\sigma)&=K(y\leq x)\circ K(a\leq y)\circ K(z\leq a)(\sigma)\\ &=K(y\leq x)\circ K(a\leq y)(\tau)=0 .
\end{aligned}
\end{equation*}
Thus, $\alpha_\ell(x)\circ p_\ell^1(x)(r^y)=0$. 

Now, let $r^y\in R_\ell$ be a relation, added by the algorithm, such that $p_\ell^1(r^y)=g^z_\sigma+g^w_{\tau}$. Since a simplex generator $g_\xi^y$ is added only when
$\xi$ is not in the image of any predecessor of $y$, no relation
$r^y$ created by {\sc Generator} at grade $y$ can involve a generator
added at grade $y$. Indeed, any other active representative of the same
simplex $\xi$ would have to be inherited from a predecessor of $y$,
contradicting the defining property of $g_\xi^y$. Hence, the algorithm adds the relation $r^y$ in the {\sc Generator} routine at $y\in P$ because there exist $a\prec y$ and $b\prec y$ such that $(\rho,g^z_\sigma)$ is active at $a$, $(\kappa,g^w_{\tau})$ is active at $b$ and $\pi\coloneqq K(a\prec y)(\rho)=K(b\prec y)(\kappa)$. For $y\leq x$, we get 
\begin{equation*}
\begin{aligned}
\alpha_\ell(x)\circ p_\ell^1(x)(r^y)&=\alpha_\ell(x)(g^z_\sigma+g^w_{\tau})\\ &=\alpha_\ell(x)(g^z_\sigma)+\alpha_\ell(x)(g^w_{\tau})\\ &=K(z\leq x)(\sigma)+K(w\leq x)(\tau).    
\end{aligned}
\end{equation*}
By Lemma~\ref{lemma:algo_1}, we further obtain 
\begin{equation*}
\begin{aligned}
K(z\leq x)(\sigma)&=K(y\leq x)\circ K(a\leq y)\circ K(z\leq a)(\sigma)\\ &=K(y\leq x)\circ K(a\leq y)(\rho)\\ &=K(y\leq x)(\pi)   
\end{aligned} 
\end{equation*}
and analogously 
\begin{equation*}
\begin{aligned}
K(w\leq x)(\tau)&=K(y\leq x)\circ K(b\leq y)\circ K(w\leq b)(\tau)\\ &=K(y\leq x)\circ K(b\leq y)(\kappa)\\ &=K(y\leq x)(\pi).    
\end{aligned}
\end{equation*}
Therefore, $\alpha_\ell(x)\circ p_\ell^1(x)(r^y)=0$. 

We have shown that for all $x\in P$ and all relations $r^y$ of $R_\ell$ such that $y\leq x$ we have $\alpha_\ell(x)\circ p_\ell^1(x)=0$. This implies $\alpha_\ell\circ p_\ell^1=0$ and $\im(p_\ell^1)\subseteq \ker(\alpha_\ell)$.    
\end{proof}

\begin{lemma}\label{lemma:algo_2}
Let $g^x_\sigma\in\mathcal{S}_\ell$ be such that $\tau:=K(x\leq y)(\sigma)\in K_\ell(y)$. Then after the call of {\sc Collapse} and {\sc Generator} in the algorithm, there exists $(\tau,g^z_\rho)\in L^y_{act}$ and $r\in R_\ell$ such that $g^x_\sigma+g^z_\rho=p_\ell^1(r)$.
\end{lemma}

\begin{proof}
Let $x=z_0\prec\cdots\prec z_m=y$ be a path from $x$ to $y$ in $P$. Moreover, let $\pi_i=K(z_0\leq z_i)(\sigma)$ for $0\leq i\leq m$. In particular, $\pi_0=\sigma$, $\pi_m=K(x\leq y)(\sigma)=\tau$ and, since $\tau\in K_\ell(y)$, we have $\pi_i\in K_\ell(z_i)$. By Lemma \ref{lem:act_gen_simp}, each $\pi_i$ has an active generator $(\pi_i,g_{\kappa_i}^{a_i})$ at $z_i$. At $x=z_0$, the generator $(\sigma,g^x_\sigma)=(\pi_0,g_{\kappa_0}^{a_0})$ is active. 

In the routine {\sc Collapse} at grade $z_i$, we add $\big(K(z_{i\minus 1}\prec z_i)(\pi_{i\minus 1}),g_{\kappa_{i\minus 1}}^{a_{i\minus 1}}\big)=\big(\pi_i,g_{\kappa_{i\minus 1}}^{a_{i\minus 1}}\big)$ to $L_{act}^{z_i}$. If $(\pi_i,g_{\kappa_{i\minus 1}}^{a_{i\minus 1}})\neq (\pi_i,g_{\kappa_i}^{a_i})$, in the routine {\sc Generator}, the algorithm adds the relation $r^{z_i}\mapsto g^{a_{i\minus 1}}_{\kappa_{i\minus 1}}+g^{a_i}_{\kappa_i}$ if the edge $e^{z_i}_{(g^{a_{i\minus 1}}_{\kappa_{i\minus 1}},g^{a_i}_{\kappa_i})}$ does not close a cycle. If the edge $e^{z_i}_{(g^{a_{i\minus 1}}_{\kappa_{i\minus 1}},g^{a_i}_{\kappa_i})}$ closes a cycle, then there is a path from $g^{a_{i\minus 1}}_{\kappa_{i\minus 1}}$ to $g^{a_i}_{\kappa_i}$ in $\mathcal{G}_{act}^{z_i}$. The edges on this path correspond to relations $r^{w_1},\ldots,r^{w_q}\in R_\ell$ with $w_j\leq z_i$ such that $p_\ell^1(r^{w_1}+\cdots+r^{w_q})=g^{a_{i\minus 1}}_{\kappa_{i\minus 1}}+g^{a_i}_{\kappa_i}$. Therefore, in any case, there exists $r_i\in R_\ell$ such that $p_\ell^1(r_i)=g^{a_{i\minus 1}}_{\kappa_{i\minus 1}}+g^{a_i}_{\kappa_i}$. If $(\pi_i,g_{\kappa_{i\minus 1}}^{a_{i\minus 1}})=(\pi_i,g_{\kappa_i}^{a_i})$ we set $r_i=0$. 

We now define $(\tau,g^z_\rho)\coloneqq (\pi_m,g^{a_m}_{\kappa_m})$ and $r=r_1+\cdots+r_m$ and obtain
$p_\ell^1(r)=p_\ell^1(r_1+\cdots+r_m)=g^{a_0}_{\kappa_0}+g^{a_m}_{\kappa_m}=g^x_\sigma+g^{z}_\rho$.
\end{proof}

\begin{lemma}\label{lemma:algo_3}
Let $g^x_\sigma\in \mathcal{S}_\ell$ and $x\leq y\in P$ such that $K(x\leq y)(\sigma)\notin K_\ell(y)$. Then after the call of {\sc Collapse} in the algorithm, there exists $r\in R_\ell$ such that $p_\ell^1(r)=g^x_\sigma$.
\end{lemma}

\begin{proof}
Let $x=z_0\prec\cdots\prec z_m=y$ be a path from $x$ to $y$ in $P$. Moreover, let $\pi_i=K(z_0\leq z_i)(\sigma)$ for $0\leq i\leq m$. Since $K(x\leq y)(\sigma)=K(z_{m\minus 1}\prec z_m)\circ\cdots\circ K(z_0\prec z_1)(\sigma)\notin K_\ell(y)$, there exists a minimal $0<i\leq m$ such that $\pi_i\notin K_\ell(z_i)$. By Lemma \ref{lemma:algo_2}, there exists an active generator $(\pi_{i\minus 1},g^z_\rho)$ at $z_{i\minus 1}$ representing $\pi_{i \minus 1}$ and $r'\in R_\ell$ such that $g^x_\sigma+g^z_\rho=p_\ell^1(r')$. Since $K(z_{i\minus 1}\prec z_i)(\pi_{i\minus 1})\notin K_\ell(z_i)$ the algorithm adds the relation $r^{z_i}\mapsto g^z_\rho$ in the {\sc Collapse} routine if the edge $e_{(\Omega,g^z_\rho)}^{z_i}$ does not close a cycle in $\mathcal{G}_{act}^{z_i}$. If the edge $e_{(\Omega,g^z_\rho)}^{z_i}$ does close a cycle, then there is a path from $\Omega$ to $g^z_\rho$ in $\mathcal{G}_{act}^{z_i}$. This path corresponds to relations $r^{w_1},\ldots,r^{w_q}$ such that $p_\ell^1(r^{w_1}+\cdots+r^{w_q})=g^z_\rho$. Hence, in any case, there exists a relation $r''\in R_\ell$ such that $p_\ell^1(r'')=g^z_\rho$. Therefore, $p_\ell^1(r'+r'')=g^x_\sigma+g^z_\rho+g^z_\rho=g^x_\sigma$. 
\end{proof}

\begin{proposition} \label{prop:p_exact_2}
Let $G_\ell\xleftarrow{p_\ell^1}R_\ell$ be the matrix for the $\ell$-simplices constructed by the {\sc PiRep} algorithm. Then $\im(p_\ell^1)=\ker(\alpha_\ell)$. 
\end{proposition}

\begin{proof}
Let $g^{y_1}_{\sigma_1},\ldots,g^{y_m}_{\sigma_m}\in G_\ell$ such that $y_i\leq x$ and $g^{y_i}_{\sigma_i}\neq g^{y_j}_{\sigma_j}$ for all $1\leq i<j \leq m$ and assume $\alpha_\ell(x)(g^{y_1}_{\sigma_1}+\cdots+g^{y_m}_{\sigma_m})=\sum_{i=1}^m K(y_i\leq x)(\sigma_i)=0$. Suppose $\tau\in K_\ell(x)$ appears in this sum and $K(y_i\leq x)(\sigma_i)=\tau$ for $1\leq i\leq q$ and $K(y_i\leq x)(\sigma_i)\neq\tau$ for $q+1\leq i\leq m$. This implies that $q$ is even. Let $(\tau,g^z_\rho)\in L_{act}^x$ be the active generator at $x$ representing $\tau$. By Lemma \ref{lemma:algo_2}, there are relations $r_i\in R_\ell$ such that $p_\ell^1(x)(r_i)=g^z_\rho+g^{y_i}_{\sigma_i}$ for $1\leq i\leq q$. Therefore, $p_\ell^1(x)(r_1+\cdots+r_q)=q\cdot g^z_\rho+g^{y_1}_{\sigma_1}+\cdots+g^{y_q}_{\sigma_q}=g^{y_1}_{\sigma_1}+\cdots+g^{y_q}_{\sigma_q}$. For every other simplex $\tau'\in K_\ell(x)$ appearing in this sum we can argue analogously. Now assume $K(y_i\leq x)(\sigma_i)=0$, i.e.\ $K(y_i\leq x)(\sigma_i)\notin K_\ell(x)$. By Lemma \ref{lemma:algo_3}, there exists $r'_i\in R_\ell$ such that $p_\ell^1(x)(r'_i)=g^{y_i}_{\sigma_i}$. We conclude that $g^{y_1}_{\sigma_1}+\cdots+g^{y_m}_{\sigma_m}\in\im\!\big( p_\ell^1(x)\big)$. Therefore, $\ker\!\big(\alpha_\ell(x)\big)\subseteq \im\!\big(p_\ell^1(x)\big)$ for all $x\in P$ 
which together with Proposition~\ref{prop:p_exact_1} implies that $\ker(\alpha_\ell)=\im(p_\ell^1)$. 
\end{proof}

Finally, we show that the matrices $f^0_\ell$ represent the boundary maps $\partial_\ell$ on the generators. 

\begin{proposition} \label{prop:f_correct}
The morphism $f^0_\ell\colon G_\ell\rightarrow G_{\ell\minus 1}$, constructed by the {\sc PiRep} algorithm, lifts the morphism $\partial_\ell\colon C_\ell\rightarrow C_{\ell\minus 1}$, i.e.\ the following square commutes:
\begin{equation}\label{eq:lift_diagram_algo}
\begin{tikzcd}
C_{\ell} \arrow[d,swap,"\partial_\ell"] & G_\ell \arrow[l,swap,"\alpha_\ell"] \arrow[d,"f^0_\ell"] \\
C_{\ell\minus 1} & G_{\ell\minus 1} \arrow[l,swap,"\alpha_{\ell\minus 1}"]
\end{tikzcd}.
\end{equation}
\end{proposition}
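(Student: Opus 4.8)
Since $G_\ell$ is free on the generators $g_\sigma^x\in\mathcal{S}_\ell$ placed at their birth grades $x$ (Proposition~\ref{prop:correct_generators}), any two morphisms of persistence modules $G_\ell\rightarrow C_{\ell\minus 1}$ that agree at each grade $x$ on the generator $g_\sigma^x$ born at $x$ must coincide: naturality of both morphisms propagates the agreement to all basis elements of $G_\ell(x')$ for $x'\geq x$. The plan is therefore to reduce the claimed commutativity $\alpha_{\ell\minus 1}\circ f_\ell=\partial_\ell\circ\alpha_\ell$ to the pointwise identity
\begin{equation*}
\alpha_{\ell\minus 1}(x)\big(f_\ell(x)(g_\sigma^x)\big)=\partial_\ell\big(\alpha_\ell(x)(g_\sigma^x)\big)\qquad\text{for every }g_\sigma^x\in\mathcal{S}_\ell .
\end{equation*}
(Implicit in the statement is that $f_\ell$ is a well-defined morphism of projective modules: the column of $f$ indexed by $g_\sigma^x$ is supported only in rows $g_{\kappa_j}^{y_j}$ with $y_j\leq x$, because those generators are active at $x$, so the matrix is genuinely $P$-graded.)

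The right-hand side is immediate: $\alpha_\ell(x)(g_\sigma^x)=K(x\leq x)(\sigma)=\sigma$, so $\partial_\ell\big(\alpha_\ell(x)(g_\sigma^x)\big)=\partial\sigma$, the simplicial boundary. For the left-hand side, the routine {\sc Boundary} sets $f_\ell(x)(g_\sigma^x)=\sum_{j=1}^k g_{\kappa_j}^{y_j}$, where the pairs $(\tau_j,g_{\kappa_j}^{y_j})$ are active at $x$ and $\sum_{j=1}^k\tau_j=\partial\sigma$. Applying $\alpha_{\ell\minus 1}(x)$ and invoking Proposition~\ref{lemma:algo_1}, each active pair satisfies $K(y_j\leq x)(\kappa_j)=\tau_j$; moreover $\tau_j$, being a facet of $\sigma\in K_\ell(x)$, lies in $K_{\ell\minus 1}(x)$, so $\alpha_{\ell\minus 1}(x)(g_{\kappa_j}^{y_j})=\tau_j$. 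Summing yields $\alpha_{\ell\minus 1}(x)\big(f_\ell(x)(g_\sigma^x)\big)=\sum_j\tau_j=\partial\sigma$, matching the right-hand side, and the square commutes.

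The one point requiring genuine care — and the main obstacle — is that {\sc Boundary} is well-defined, i.e.\ that when it is invoked for $\sigma$, the active set $L_{act}^x$ already contains an active pair $(\tau_j,g_{\kappa_j}^{y_j})$ for each facet $\tau_j$ of $\sigma$, so that $\partial\sigma$ can be written as the required sum of active simplices. This is where the structure of the algorithm enters: every facet $\tau$ of $\sigma$ lies in $K_{\ell\minus 1}(x)$, hence (by the argument establishing surjectivity of $\alpha_{\ell\minus 1}$ in Proposition~\ref{prop:alpha_epi}) equals $K(z\leq x)(\rho)$ for some generator $g_\rho^z\in\mathcal{S}_{\ell\minus 1}$, and then Proposition~\ref{lemma:algo_2} (applied in degree $\ell\minus 1$) produces an active pair $(\tau,g_{\kappa}^{y})\in L_{act}^x$. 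One then checks that these $(\ell\minus 1)$-dimensional representatives are in place at the moment {\sc Boundary} is called for the $\ell$-simplex $\sigma$: those coming from predecessors are supplied by {\sc Collapse}, and those born at $x$ are registered by {\sc Generator}, which processes generators in non-decreasing dimension. Once this bookkeeping invariant is granted, the lifting property is exactly the short computation above.
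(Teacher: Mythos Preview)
Your argument is correct and shares the same core computation with the paper (apply Proposition~\ref{lemma:algo_1} to the active pairs chosen by {\sc Boundary} and conclude $\alpha_{\ell-1}(x)\bigl(f_\ell(x)(g_\sigma^x)\bigr)=\partial\sigma$), but packages it more economically. The paper verifies $\alpha_{\ell-1}(x)\circ f_\ell(x)(g_\rho^y)=\partial_\ell(x)\circ\alpha_\ell(x)(g_\rho^y)$ at \emph{every} grade $x\geq y$, which forces a case split according to whether $K(y\leq x)(\rho)$ is still an $\ell$-simplex or has collapsed, and in the first case requires invoking naturality of $\partial_\ell$ with respect to $C_\bullet(y\leq x)$. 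Your reduction via the freeness of $G_\ell$ lets you check only at the birth grade, where the simplex is trivially present and no naturality chase is needed. You also make explicit the well-definedness of {\sc Boundary} (that each facet of $\sigma$ has an active representative when the routine is called), which the paper leaves implicit; your justification there is correct in spirit, though note that Proposition~\ref{lemma:algo_2} is stated for the state \emph{after} {\sc Generator}, so strictly speaking one appeals to the mechanism you spell out directly (facets from predecessors arrive via {\sc Collapse}, facets born at $x$ via the dimension-ordered loop in {\sc Generator}) rather than to that proposition verbatim.
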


\begin{proof}
Let $x\in P$ and $g_\rho^y\in \mathcal{S}_\ell$ such that $y\leq x$. When $(\rho,g^y_\rho)$ is added to $L^y_{act}$ by {\sc Generator} at $y\in P$, the {\sc Boundary} routine of the algorithm finds active generators $(\pi_1,g^{y_1}_{\kappa_1}),\ldots,(\pi_k,g^{y_k}_{\kappa_k})$, which exist by Lemma \ref{lem:act_gen_simp}, such that $y_i\leq y$ and $\partial\rho=\sum_{i=1}^k\pi_i$ and defines $f^0_\ell(g^y_\rho)=\sum_{i=1}^kg^{y_i}_{\kappa_i}$. By Lemma \ref{lemma:algo_1}, $K(y_i\leq y)(\kappa_i)=\pi_i$ for all $1\leq i\leq k$. First, assume $K(y\leq x)(\rho)=\sigma\in K_\ell(x)$. Then 
\begin{equation*}
\partial_\ell(x)\circ \alpha_\ell(x)(g_\rho^y)=\partial_\ell(x)\big(K(y\leq x)(\rho)\big)=\partial_\ell(x)(\sigma)=\sum_{i=1}^k\tau_i    
\end{equation*}
and
\begin{equation*}
\begin{aligned}
\alpha_{\ell\minus 1}(x)\circ f^0_\ell(x)(g^y_\rho)&=\alpha_{\ell\minus 1}(x)(\sum_{i=1}^kg^{y_i}_{\kappa_i})=\sum_{i=1}^k \alpha_{\ell\minus 1}(x)(g^{y_i}_{\kappa_i})=\sum_{i=1}^k K(y_i\leq x)(\kappa_i)  \\ &=\sum_{i=1}^k K(y\leq x)\circ K(y_i\leq y)(\kappa_i)=\sum_{i=1}^k K(y\leq x)(\pi_i) \\ & =\sum_{i=1}^k C_{\ell\minus 1}(y\leq x)(\pi_i)=C_{\ell\minus 1}(y\leq x)(\sum_{i=1}^k \pi_i)\\ &=C_{\ell\minus 1}(y\leq x)\circ\partial_\ell(y)(\rho)=\partial_\ell(x)\circ C_{\ell}(y\leq x)(\rho) \\ &=\partial_\ell(x)\big(K(y\leq x)(\rho)\big)=\partial_\ell(x)(\sigma)=\sum_{i=1}^k \tau_i 
\end{aligned}
\end{equation*}
where we use that
\begin{equation*}
\begin{tikzcd}[column sep=large]
C_\ell(y) \arrow[r,"C_\ell(y\leq x)"] \arrow[d,swap,"\partial_\ell(y)"] & C_\ell(x) \arrow[d,"\partial_\ell(x)"] \\
C_{\ell\minus 1}(y) \arrow[r,"C_{\ell\minus 1}(y\leq x)"] & C_{\ell\minus 1}(x)
\end{tikzcd}
\end{equation*}
commutes. Now assume $K(y\leq x)(\rho)\notin K_\ell(x)$. Then $\partial_\ell(x)\circ \alpha_\ell(x)(g^y_\rho)=0$ and
\begin{equation*}
\begin{aligned}
\alpha_{\ell\minus 1}(x)\circ f^0_\ell(x)(g^y_\rho)=\partial_\ell(x)\big(K(y\leq x)(\rho)\big)=\partial_\ell(x)\big(0\big)=0.
\end{aligned}
\end{equation*}
Therefore, \eqref{eq:lift_diagram_algo} commutes and $f^0_\ell$ is a lift of $\partial_\ell$ to the generators $G_\ell$ and $G_{\ell\minus 1}$.
\end{proof}

\begin{theorem} \label{thm:correct_presentation}
The matrices $p^1_\ell$ and $f^0_\ell$ constructed by the algorithm {\sc PiRep}: 
\begin{equation*}
\begin{tikzcd}
& \vdots \arrow[d] & \vdots \arrow[d] &  \\
0 & C_2 \arrow[d,swap,"\partial_2"] \arrow[l] & G_2 \arrow[l,swap,"\alpha_2"] \arrow[d,"f^0_2"] & R_2 \arrow[l,swap,"p_2^1"] \\
0 & C_1 \arrow[l]  \arrow[d,swap,"\partial_1"] & G_1 \arrow[l,swap,"\alpha_1"] \arrow[d,"f^0_1"] & R_1 \arrow[l,swap,"p_1^1"] \\
0 & C_0 \arrow[l] & G_0 \arrow[l,swap,"\alpha_0"] & R_0 \arrow[l,swap,"p_0^1"]
\end{tikzcd}
\end{equation*}
form degreewise presentations of $C_\bullet(K)$ and lifts of $\partial_\bullet$ to the generators.
\end{theorem}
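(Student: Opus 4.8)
The plan is to recognize that Theorem~\ref{thm:correct_presentation} is the assembly statement that bundles the preceding propositions into the displayed diagram, and to spell out which proposition supplies which piece. Recall that, combining the notion of projective presentation with the notion of a lift from Definition~\ref{def:projective_resolution}, the claim amounts to two things: (i) for every fixed degree $\ell\geq 0$, the row $0\leftarrow C_\ell\xleftarrow{\alpha_\ell} G_\ell\xleftarrow{p_\ell^1} R_\ell$ is a projective presentation of $C_\ell$; and (ii) the family of maps $f_\bullet$ lifts the family of boundary maps $\partial_\bullet$, i.e.\ every vertical square commutes. I would therefore structure the proof as a short verification of (i) and (ii), pointing to the established results.

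For (i) I would first observe that $G_\ell$ and $R_\ell$ are projective. Projectivity of $G_\ell$ is exactly Proposition~\ref{prop:correct_generators}, which identifies $G_\ell=\bigoplus_{g^x_\sigma\in\mathcal{S}_\ell}\text{Proj}[x]$. Projectivity of $R_\ell$ is read off the algorithm: in {\sc Collapse} and {\sc Generator} every column of $p^1_\ell$ is created together with a single graded identifier $\rsf^{y}$ recording the grade $y$ at which the relation $r^{y}$ is added, so $R_\ell=\bigoplus_i\text{Proj}[y_i]$ over the relations the algorithm records. Next, Proposition~\ref{prop:alpha_well_defined} shows the map $\alpha_\ell$ of Definition~\ref{def:alpha} is a well-defined morphism of persistence modules; Proposition~\ref{prop:alpha_epi} shows it is an epimorphism, which gives exactness of the row at $C_\ell$ (i.e.\ $\text{im }\alpha_\ell=\text{ker}(C_\ell\to 0)=C_\ell$); and Proposition~\ref{prop:p_exact_2}, which rests on Propositions~\ref{prop:p_exact_1}, \ref{lemma:algo_1}, \ref{lemma:algo_2} and \ref{lemma:algo_3}, gives $\text{im }p_\ell^1=\text{ker }\alpha_\ell$, i.e.\ exactness at $G_\ell$. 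Putting these together, the row is a projective presentation of $C_\ell$ for each $\ell$.

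For (ii) I would invoke Proposition~\ref{prop:f_correct}, which establishes $\partial_\ell\circ\alpha_\ell=\alpha_{\ell\minus 1}\circ f_\ell$ in every degree, so all the squares in the diagram commute and $f_\bullet$ is a lift of $\partial_\bullet$ in the sense of Definition~\ref{def:projective_resolution}. Here I would explicitly flag what the theorem does \emph{not} assert: the maps $f_\ell$ need not satisfy $f_{\ell\minus 1}\circ f_\ell=0$, so $(G_\bullet,f_\bullet)$ is \emph{not} itself a chain complex; repairing this defect is exactly what the correction terms of Section~\ref{sec:homology} are for, and it is not needed for the notion of "presentation of $C_\bullet(K)$" used here.

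I do not expect a genuine obstacle at the level of Theorem~\ref{thm:correct_presentation} itself: all the substantive work has already been carried out in the propositions. The only bookkeeping point worth a sentence is that $R_\ell$ is well-defined as a projective module — that the graded index the algorithm attaches to each column of $p^1_\ell$ is unambiguous and equals the grade at which the corresponding relation becomes available — which is immediate from the use of the graded counter $\rsf$. The truly delicate part, already dispatched, was the inclusion $\text{ker }\alpha_\ell\subseteq\text{im }p_\ell^1$ in Proposition~\ref{prop:p_exact_2}, where one must argue through the relation graph $\G_{act}^{x}$ that every cancellation occurring in a kernel element is witnessed by a sum of relations the algorithm actually recorded, rather than by one it discarded as a cycle.
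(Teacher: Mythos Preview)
Your proposal is correct and follows essentially the same approach as the paper: assemble the theorem from Propositions~\ref{prop:alpha_epi}, \ref{prop:p_exact_2}, and \ref{prop:f_correct}. Your write-up is more detailed (you explicitly note projectivity of $G_\ell$ and $R_\ell$ and flag that $f_{\ell-1}\circ f_\ell$ need not vanish), but the logical skeleton is identical to the paper's three-line proof.
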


\begin{proof}
By Proposition \ref{prop:alpha_well_defined} and \ref{prop:alpha_epi}, $\alpha_\ell$ is a well-defined epimorphism. By Proposition \ref{prop:p_exact_2}, $\ker(\alpha_\ell)=\im(p_\ell^1)$. Hence, $p_\ell^1$ is a presentation of $C_\ell$. By Proposition \ref{prop:f_correct}, $f^0_\ell$ is a lift of $\partial_\ell$ to the generators.    
\end{proof}

\subsection{Minimality of the presentation $p_\ell^1$}
\label{sec:p1minimality}
In this section, we show that the matrix $p_\ell^1$, as constructed by the algorithm {\sc PiRep}, is a minimal presentation of $C_\ell$. We use the notion of projective cover in Definition \ref{def:projective_cover} together with Proposition \ref{prop:minimal_eqq_projective_cover}.

\begin{proposition} \label{prop:alpha_proj_cover}
The morphism $\alpha_\ell\colon G_\ell\rightarrow C_\ell$ is a projective cover.    
\end{proposition}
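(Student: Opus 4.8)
The plan is to use the characterization of a projective cover from Definition \ref{def:projective_cover}: we already know from Proposition \ref{prop:alpha_epi} that $\alpha_\ell\colon G_\ell\rightarrow C_\ell$ is an epimorphism and $G_\ell$ is projective, so the only thing left is to verify the inclusion $\operatorname{ker}\alpha_\ell\subseteq \operatorname{Rad}(G_\ell)$. Unwinding Definition \ref{def:radical} at a grade $x$, the space $\operatorname{Rad}(G_\ell)(x)$ is the span of all generators $g_\sigma^y$ with $y<x$, i.e. all generators \emph{strictly} born before $x$; equivalently, the complement of $\operatorname{Rad}(G_\ell)(x)$ inside $G_\ell(x)$ is spanned by the generators $g_\sigma^x$ born exactly at $x$. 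So the claim to prove is: if an element of $G_\ell(x)$ lies in $\operatorname{ker}\alpha_\ell(x)$, then when written in the basis of generators it has zero coefficient on every generator $g_\sigma^x$ with birth grade exactly $x$.

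The key step is a local argument at each grade $x$. Suppose $\xi=\sum_i g_{\sigma_i}^{y_i}\in\operatorname{ker}\alpha_\ell(x)$ with the $g_{\sigma_i}^{y_i}$ distinct generators, $y_i\le x$, and suppose for contradiction that some summand has $y_i=x$, say $g_{\sigma_1}^x$. By Definition \ref{def:simplex_generators}, $\sigma_1\in K(x)$ but $\sigma_1\notin\operatorname{im}K(y\prec x)$ for any predecessor $y\prec x$; in particular $\sigma_1$ is a genuine $\ell$-simplex of $K(x)$ that is not the image of any simplex living strictly below $x$. Under $\alpha_\ell(x)$ we have $\alpha_\ell(x)(g_{\sigma_1}^x)=\sigma_1$. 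I would now argue that $\sigma_1$ cannot be cancelled in the sum $\alpha_\ell(x)(\xi)=\sum_i K(y_i\le x)(\sigma_i)$: for any other summand $g_{\sigma_j}^{y_j}$ with $y_j=x$ we have $\sigma_j\ne\sigma_1$ (distinct generators at the same grade are distinct simplices), so $\alpha_\ell(x)(g_{\sigma_j}^x)=\sigma_j\ne\sigma_1$; and for any summand with $y_j<x$, its image $K(y_j\le x)(\sigma_j)$ is either $0$ or a simplex that factors through some predecessor of $x$, hence lies in $\operatorname{im}K(y\prec x)$ and therefore cannot equal $\sigma_1$. Thus the coefficient of $\sigma_1$ in $\alpha_\ell(x)(\xi)$ is $1\ne 0$, contradicting $\xi\in\operatorname{ker}\alpha_\ell(x)$. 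Hence no summand of $\xi$ is born at $x$, i.e. $\xi\in\operatorname{Rad}(G_\ell)(x)$, proving $\operatorname{ker}\alpha_\ell\subseteq\operatorname{Rad}(G_\ell)$.

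I expect the main obstacle to be the middle case above: justifying cleanly that if $y_j<x$ then $K(y_j\le x)(\sigma_j)$, whenever it is a nonzero $\ell$-simplex, lies in $\bigcup_{y\prec x}\operatorname{im}K(y\prec x)$. This is really just functoriality — factor $y_j\le x$ through a predecessor $y\prec x$ (possible since $y_j<x$ means there is a maximal chain from $y_j$ to $x$ whose last step is some $y\prec x$), so $K(y_j\le x)(\sigma_j)=K(y\prec x)\big(K(y_j\le y)(\sigma_j)\big)\in\operatorname{im}K(y\prec x)$ — but it is the point where the definition of a \emph{generator} (Definition \ref{def:simplex_generators}) is used in an essential way, and care is needed to handle the possibility that $K(y_j\le x)(\sigma_j)=0$ (i.e. collapses to lower dimension) separately, where it trivially does not interfere with the coefficient of $\sigma_1$. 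Combined with Proposition \ref{prop:minimal_eqq_projective_cover}, this proposition is what upgrades Theorem \ref{thm:correct_presentation} to minimality of the first step of the resolution.
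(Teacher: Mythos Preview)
Your proposal is correct and follows essentially the same argument as the paper: assume some generator in a kernel element is born exactly at $x$, observe that the corresponding simplex is not in the image of any predecessor map, and conclude that no other term in the sum can cancel it, yielding a contradiction. Your explicit identification of $\operatorname{Rad}(G_\ell)(x)$ as the span of generators with strict inequality $y<x$, and your separate handling of the collapse-to-zero case, are minor expository additions but do not change the structure of the proof.
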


\begin{proof}
Let $\mathrm{Rad}(G_\ell)$ denote the radical of $G_\ell$ as defined in Definition \ref{def:radical}. Suppose $\alpha_\ell(x)(g^{y_1}_{\sigma_1}+\cdots+g^{y_m}_{\sigma_m})=0$ for $g^{y_i}_{\sigma_i}\in \mathcal{S}_\ell$ such that $y_i\leq x$ and $g^{y_i}_{\sigma_i}\neq g^{y_j}_{\sigma_j}$ for $i\neq j$. If $y_j=x$, then $\sigma_j\in K_\ell(x)$ and $\sigma_j\notin \bigcup_{z\prec x}\im \!\big(K(z\prec x)\big)$. For all $i\neq j$, if $y_i=x$, then $\sigma_j\neq\sigma_i$. For all $y_i<x$, $K(y_i\leq x)(\sigma_i)\neq \sigma_j$ since any path from $y_i$ to $x$ contains some predecessor $z\prec x$ which would imply $K(z\prec x)\circ K(y_i\leq z)(\sigma_i)=\sigma_j$, contradicting $\sigma_j\notin \bigcup_{z\prec x}\im \!\big(K(z\prec x)\big)$. Therefore,
\begin{equation*}
\alpha_\ell(x)(g^{y_1}_{\sigma_1}+\cdots+g^{y_m}_{\sigma_m})=\sum_{i=1}^m K(y_i\leq x)(\sigma_i)=\sigma_j+\sum_{i\neq j}K(y_i\leq x)(\sigma_i).
\end{equation*}
But this sum cannot be zero since $K(y_i\leq x)(\sigma_i)\neq \sigma_j$ for all $y_i<x$ and $\sigma_i\neq\sigma_j$ for all $y_i=x$ with $i\neq j$. This contradiction implies $y_i<x$ for all $1\leq i\leq m$. Therefore, $g^{y_i}_{\sigma_i}\in \im \!\big(G_\ell(z\prec x)\big)$ for some $z\prec x$ and $g^{y_1}_{\sigma_1}+\cdots+g^{y_m}_{\sigma_m}\in \mathrm{Rad}(G_\ell)(x)$. We conclude that $\ker(\alpha_\ell)\subseteq \mathrm{Rad}(G_\ell)$, which implies that $\alpha_\ell$ is a projective cover of $C_\ell$.    
\end{proof}

Next, we show that the presentation matrix $p_\ell^1$, constructed by the {\sc PiRep} algorithm, almost has the structure of a $\overline{P}$-filtered graph. We will use this structure to show that $p_\ell^1$ is a minimal presentation.

\begin{definition} \label{def:boundary_p}
Suppose $p^1_\ell$ is the presentation matrix generated by the algorithm. Recall that $\overline{P}\coloneqq P\cup\{-\infty\}$ with $-\infty<x$ for all $x\in P$. Define the morphism of $\overline{P}$-persistence modules $\overline{p}^1_\ell\colon R_\ell\rightarrow \overline{G}_\ell$, where $\overline{G}_\ell=G_\ell\oplus \langle\Omega\rangle$ with an additional generator $\Omega$ of grade $-\infty$. The matrix $\overline{p}_\ell^1$ is obtained from $p_\ell^1$ by adding one row labeled by $\Omega$ and replacing each column of the form $r^y\mapsto g^x_\sigma$ by the column $r^y\mapsto g^x_\sigma+\Omega$. Since every column of $\overline{p}_\ell^1$ has exactly two non-zero entries, it is the boundary matrix of a $\overline{P}$-filtered graph $\mathcal{G}\colon \overline{P}\rightarrow\Delta\mathbf{Cpx}_{\leq 1}$. The graph $\mathcal{G}$ is defined by the vertices $\Omega$ born at $-\infty$ and $g^x_\sigma\in G_\ell$ born at $x\in P$ and by the edges $r^y\in R_\ell$ born at $y\in P$. All morphisms $\mathcal{G}(x< y)$ are inclusions.
\end{definition}

\begin{remark}
The edges of the graph $\mathcal{G}(x)$ are exactly the edges added by the algorithm to the graphs $\mathcal{G}_{act}^{y}$ with $y\leq x$. The relations $r^y\mapsto g^w_\sigma+g^z_\tau$ correspond to the edges $e^{y}_{(g^w_\sigma,g^z_\tau)}$ added in step 2(a) of {\sc Generator}. The relations $r^y\mapsto g^w_\sigma$ that become $r^y\mapsto g^w_\sigma+\Omega$ in $\overline{p}_\ell^1 $ correspond to the edges $e^y_{(g^w_\sigma,\Omega)}$ added in step 3(b) of {\sc Collapse}. Therefore, $\mathcal{G}_{act}^x$ is a subgraph of $\mathcal{G}(x)$ for every $x\in P$.
\end{remark}

\begin{lemma}\label{prop:spanning_forest}
At any grade $x_i$ in the algorithm, after execution of {\sc Generator}, $\mathcal{F}_{act}^{x_i}$ is a spanning forest of $\mathcal{G}(x_i)$.
\end{lemma}

\begin{proof}
By definition, it is clear that $\mathcal{F}_{act}^{x_i}$ is always a subgraph of $\mathcal{G}(x_i)$. We proceed by induction on the linear extension of the grades in $P$. At a minimal grade $x\in P$, the statement is obviously true as there are no edges in $\mathcal{F}_{act}^{x}$ or $\mathcal{G}(x)$. Assume that the statement is true for all $0\leq j<i$. At the grade $x_i$, we have that $\mathcal{G}(x_i)$ is the union of all $\mathcal{G}(y)$ with $y\prec x_i$ plus the generators $g^{x_i}_\sigma$ and relations $r^{x_i}$ born at $x_i$. Define $\mathcal{G}'\coloneqq\bigcup_{y\prec x_i}\mathcal{G}(y)$. By inductive assumption $\mathcal{F}_{act}^y$ is a spanning forest of $\mathcal{G}(y)$. Hence, after initialization, $\mathcal{G}_{act}^{x_i}=\bigcup_{y\prec x_i}\mathcal{F}_{act}^y$ contains a spanning forest of $\mathcal{G}'$. In {\sc Collapse}, the algorithm adds the edge $e^{x_i}_{(\Omega,g^z_\rho)}$ to $\mathcal{G}_{act}^{x_i}$ exactly when it adds the relation $r^{x_i}\mapsto g^z_\rho$. Similarly, in {\sc Generator}, the algorithm adds the edge $e^{x_i}_{(g^y_\tau,g^z_\rho)}$ exactly when it adds the relation $r^{x_i}\mapsto g^y_\tau+g^z_\rho$. A relation is only added to $\mathcal{G}_{act}^{x_i}$ if it does not close a cycle. This implies that, at the grade $x_i$ where the edge is added, it is in the spanning forest $\mathcal{F}_{act}^{x_i}$. Hence, every edge added at $x_i$ is added to $\mathcal{G}'$ and also to $\mathcal{F}_{act}^{x_i}$. Before adding edges at $x_i$, a spanning forest $\mathcal{F}_{act}^{x_i}$ of $\mathcal{G}_{act}^{x_i}$ is also a spanning forest of $\mathcal{G'}$. Since all edges at $x_i$ are added to both $\mathcal{G}'$ and $\mathcal{F}_{act}^{x_i}$, this property is preserved. The remaining vertices $g_\sigma^{x_i}$ of $\mathcal{G}(x_i)$ are added to $\mathcal{G}_{act}^{x_i}$ by {\sc Generator}. Since the existence of $g_\sigma^{x_i}\in\mathcal{S}$ implies that $\sigma\notin\bigcup_{y\prec x_i}\im \!\big(K(y\prec x_i)\big)$, $g_\sigma^{x_i}$ is an isolated vertex in $\mathcal{G}_{act}^{x_i}$ and $\mathcal{G}(x_i)$ and hence also in $\mathcal{F}_{act}^{x_i}$.  Therefore, the spanning forest $\mathcal{F}_{act}^{x_i}$ of $\mathcal{G}_{act}^{x_i}$, constructed in {\sc Generator}, is also a spanning forest of $\mathcal{G}(x_i)$.  
\end{proof}

\begin{remark}
Since $\overline{p}^1_\ell(x)$ is the boundary matrix of $\mathcal{G}(x)$, $\coker\!\big(\overline{p}^1_\ell(x)\big)$ is the space of connected components and $\ker\!\big(\overline{p}^1_\ell(x)\big)$ is the space of cycles in $\mathcal{G}(x)$.
\end{remark}

\begin{lemma} \label{lem:graph_boundary_kernel}
For all $x\in P$, we have $\ker\!\big(\overline{p}^1_\ell(x)\big)= \ker\!\big(p^1_\ell(x)\big)$ and $\coker\!\big(\overline{p}^1_\ell(x)\big)\cong \coker\!\big( p^1_\ell(x)\big)\oplus [\Omega]$ where $[\Omega]$ denotes the connected component of $\Omega$.
\end{lemma}

\begin{proof}
The matrix $p^1_\ell(x)$ is the restriction of $p^1_\ell$ to all generators $g^y_\sigma$ and relations $r^z$ such that $y,z\leq x$. Suppose $p^1_\ell(x)(r^{y_1}+\cdots+r^{y_k})=(g^{a_1}_{\sigma_1}+g^{b_1}_{\tau_1})+\cdots+(g^{a_p}_{\sigma_p}+g^{b_p}_{\tau_p})+g^{c_1}_{\rho_1}+\cdots+g^{c_q}_{\rho_q}=0$. Because all generators $g^{*}_{*}$ are basis elements of $G_\ell(x)$ and a sum of basis elements over $\mathbb{F}_2$ can only be zero if every basis element appears an even number of times in the sum, the number $q$ of single-generator relations has to be even. Also, this implies $\overline{p}^1_\ell(x)(r^{y_1}+\cdots+r^{y_k})=(g^{a_1}_{\sigma_1}+g^{b_1}_{\tau_1})+\cdots+(g^{a_p}_{\sigma_p}+g^{b_p}_{\tau_p})+g^{c_1}_{\rho_1}+\cdots+g^{c_q}_{\rho_q}+q\Omega=0$. Conversely, if $\overline{p}^1_\ell(x)(r^{y_1}+\cdots+r^{y_k})=(g^{a_1}_{\sigma_1}+g^{b_1}_{\tau_1})+\cdots+(g^{a_p}_{\sigma_p}+g^{b_p}_{\tau_p})+(g^{c_1}_{\rho_1}+\Omega)+\cdots+(g^{c_q}_{\rho_q}+\Omega)=(g^{a_1}_{\sigma_1}+g^{b_1}_{\tau_1})+\cdots+(g^{a_p}_{\sigma_p}+g^{b_p}_{\tau_p})+g^{c_1}_{\rho_1}+\cdots+g^{c_q}_{\rho_q}+q\Omega=0$. We get that $q$ is even and therefore also $p^1_\ell(x)(r^{y_1}+\cdots+r^{y_k})=(g^{a_1}_{\sigma_1}+g^{b_1}_{\tau_1})+\cdots+(g^{a_p}_{\sigma_p}+g^{b_p}_{\tau_p})+g^{c_1}_{\rho_1}+\cdots+g^{c_q}_{\rho_q}=0$. Hence, $\ker\!\big( p^1_\ell(x)\big)=\ker\!\big(\overline{p}^1_\ell(x)\big)$ for all $x\in P$. 

Since $\overline{G}_\ell(x)=G_\ell(x)\oplus \Omega$ we can define a projection map $\pi\colon \overline{G}_\ell(x)\rightarrow G_\ell(x)$ that forgets $\Omega$. We obtain
\begin{equation*}
\begin{tikzcd}
\coker\!\big(\overline{p}^1_\ell(x)\big) \arrow[d,swap,"\coker \pi"] & \overline{G}_\ell(x) \arrow[l,swap,"\phi"] \arrow[d,"\pi"] & R_\ell(x) \arrow[l,swap,"\overline{p}(x)"] \arrow[d,"="] \\
\coker\!\big(p^1_\ell(x)\big) & G_\ell(x) \arrow[l,swap,"\psi"] & R_\ell(x) \arrow[l,swap,"p^1_\ell(x)"]
\end{tikzcd}
\end{equation*}
Since $\overline{p}^1_\ell(x)$ is the boundary matrix of a graph, the cokernel has a basis consisting of the connected components of this graph. If $[\Omega]$ denotes the equivalence class of $\Omega$ in $\coker \!\big(\overline{p}^1_\ell(x)\big)$, then  $(\coker \pi)([\Omega])=[\pi(\Omega)]=0$. Since $\psi\circ \pi=(\coker \pi)\circ \phi$ is an epimorphism, also $\coker \pi$ is an epimorphism and $\coker\!\big(\overline{p}^1_\ell(x)\big)\cong \im(\coker \pi)\oplus \ker(\coker \pi)\cong \coker\!\big(p^1_\ell(x)\big) \oplus \ker(\coker \pi)$. Since $\dim\ker\!\big(p^1_\ell(x)\big)=\dim\ker\!\big(\overline{p}^1_\ell(x)\big)$, we also get $\dim \im\!\big( p^1_\ell(x)\big)=\dim \im \!\big(\overline{p}^1_\ell(x)\big)$ and, moreover, $\dim \coker \!\big(p^1_\ell(x)\big)=\dim G_\ell(x)-\dim \im \!\big(p^1_\ell(x)\big)=\dim \overline{G}_\ell(x)-1-\dim \im \!\big(\overline{p}^1_\ell(x)\big)=\dim \coker\!\big( \overline{p}^1_\ell(x)\big)-1$. Thus, $\ker(\coker \pi)$ is one-dimensional, generated by $[\Omega]$, and $\coker\!\big( \overline{p}^1_\ell(x)\big)\cong \coker\!\big( p^1_\ell(x)\big) \oplus [\Omega]$.
\end{proof}

\begin{proposition} \label{prop:p1_proj_cover}
The morphism $p_\ell^1\colon R_\ell\rightarrow \ker(\alpha_\ell)$ is a projective cover.
\end{proposition}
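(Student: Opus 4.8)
The plan is to check the three defining conditions of a projective cover (Definition~\ref{def:projective_cover}) for $p_\ell^1\colon R_\ell\rightarrow\text{ker }\alpha_\ell$. Two of them are already in hand: $R_\ell=\bigoplus_i\text{Proj}[y_i]$ is projective by construction, and $\text{im }p_\ell^1=\text{ker }\alpha_\ell$ by Proposition~\ref{prop:p_exact_2}, so $p_\ell^1$ is an epimorphism onto $\text{ker }\alpha_\ell$. The real content is the third condition, $\text{ker }p_\ell^1\subseteq\text{Rad}(R_\ell)$. Since $\text{Rad}(R_\ell)(x)=\sum_{u<x}\text{im }R_\ell(u<x)$ is spanned by those basis vectors $r^y$ of $R_\ell(x)$ with $y<x$, a $\mathbb{Z}_2$-sum of distinct $r^{y_i}$ lies in $\text{Rad}(R_\ell)(x)$ iff every $y_i<x$. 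Hence it suffices to show that no element of $\text{ker }p_\ell^1(x)$ involves a relation $r^x$ born exactly at $x$.

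For this I would pass to the graph picture. By (the proof of) Proposition~\ref{lem:graph_boundary_kernel}, $\text{ker }p_\ell^1(x)$ and $\text{ker }\overline{p}_\ell^1(x)$ coincide as subspaces of $R_\ell(x)$, and $\overline{p}_\ell^1(x)$ is the boundary matrix of the $\overline{P}$-filtered graph $\mathcal{G}(x)$, so $\text{ker }p_\ell^1(x)$ is exactly the cycle space of $\mathcal{G}(x)$, with the coordinate $r^y$ corresponding to the edge of $\mathcal{G}(x)$ born at $y$. The claim to prove thus becomes: no edge of $\mathcal{G}(x)$ born at $x$ occurs in any element of the cycle space of $\mathcal{G}(x)$.

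I would establish this by a temporal argument over the order in which edges born at $x$ are admitted. Let $e_1,\dots,e_m$ be the edges of $\mathcal{G}(x)$ born at $x$, listed in the order the algorithm adds them while processing $x$ in {\sc Collapse} and then {\sc Generator}, and set $\mathcal{G}'=\bigcup_{y\prec x}\mathcal{G}(y)$, so that $\mathcal{G}(x)=\mathcal{G}'\cup\{e_1,\dots,e_m\}$. By the algorithm's rule, $e_j$ is admitted only if it does not close a cycle in $\mathcal{G}'\cup\{e_1,\dots,e_{j-1}\}$ (the relation graph current at that moment). Suppose a cycle-space element $\gamma$ of $\mathcal{G}(x)$ contained some $e_j$, and pick the largest such index $j$. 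Being in the cycle space, $\gamma$ is an edge-disjoint union of simple cycles, so some cycle $C$ in this decomposition contains $e_j$; by maximality of $j$, $C$ avoids $e_{j+1},\dots,e_m$, hence $C\subseteq\mathcal{G}'\cup\{e_1,\dots,e_j\}$, and $C\setminus\{e_j\}$ is a path joining the endpoints of $e_j$ inside $\mathcal{G}'\cup\{e_1,\dots,e_{j-1}\}$. Then admitting $e_j$ would have closed the cycle $C$ --- contradicting the way the algorithm admits $e_j$. So no $e_j$ appears in any cycle-space element, which is exactly the claim, and therefore $\text{ker }p_\ell^1\subseteq\text{Rad}(R_\ell)$. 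Together with projectivity of $R_\ell$ and Proposition~\ref{prop:p_exact_2}, this shows $p_\ell^1$ is a projective cover.

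The step needing the most care --- the main obstacle --- is precisely this temporal/matroid argument in the last paragraph: one must be careful that the edges "born at $x$" are exactly those admitted while processing $x$, that they are linearly ordered by time of admission across both {\sc Collapse} and {\sc Generator}, and that the algorithm's cyclicity test is performed against the relation graph $\mathcal{G}_{act}^x$, which after the predecessors are processed agrees with $\mathcal{G}'$ (up to the vertices and edges born at $x$). Everything else reduces to Propositions~\ref{prop:p_exact_2} and~\ref{lem:graph_boundary_kernel} and the definitions. Once $p_\ell^1$ is known to be a projective cover, Proposition~\ref{prop:minimal_eqq_projective_cover} immediately yields that $p_\ell^1$ is a minimal presentation of $C_\ell$, which is how this statement will be used.
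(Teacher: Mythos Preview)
Your overall strategy matches the paper's: reduce to the cycle space of $\mathcal{G}(x)$ via Proposition~\ref{lem:graph_boundary_kernel}, then argue no edge born at $x$ can lie on a cycle. The paper does the last step differently, by replacing every edge $e^{y_i}$ with $y_i<x$ not in $\mathcal{G}_{act}^{x}$ by a path in the appropriate spanning forest $\mathcal{F}_{act}^{z}$ (invoking Proposition~\ref{prop:spanning_forest}), thereby obtaining a cycle entirely inside $\mathcal{G}_{act}^{x}$ that still contains every edge born at $x$; this contradicts the algorithm's no-cycle rule.

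Your temporal/matroid variant has a genuine gap exactly where you flagged it. You assert that $\mathcal{G}_{act}^{x}$ ``agrees with'' $\mathcal{G}'=\bigcup_{y\prec x}\mathcal{G}(y)$ before edges at $x$ are added, but that is false as graphs: at that moment $\mathcal{G}_{act}^{x}=\bigcup_{y\prec x}\mathcal{F}_{act}^{y}$, which is in general a strict subgraph of $\mathcal{G}'$. Your path $C\setminus\{e_j\}$ may use edges of $\mathcal{G}'$ not present in $\mathcal{G}_{act}^{x}$, so the algorithm's cyclicity test (which is against $\mathcal{G}_{act}^{x}$) would not see $C$. What is true, and what rescues your argument, is that $\bigcup_{y\prec x}\mathcal{F}_{act}^{y}$ and $\mathcal{G}'$ have the same connected components, because each $\mathcal{F}_{act}^{y}$ is a spanning forest of $\mathcal{G}(y)$ by Proposition~\ref{prop:spanning_forest}; hence ``$e_j$ closes a cycle'' is equivalent in the two graphs. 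You need to invoke Proposition~\ref{prop:spanning_forest} here; once you do, your argument goes through and is a clean alternative to the paper's edge-replacement proof.
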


\begin{proof}
By construction $R_\ell$ is projective. By Proposition \ref{prop:p_exact_2}, $\ker(\alpha_\ell)=\im(p_\ell^1)$. Thus, $p_\ell^1\colon R_\ell\rightarrow \ker(\alpha_\ell)$ is an epimorphism. 

Let $x\in P$ and $r^{y_1},\ldots, r^{y_k}\in R_\ell$ such that $y_i\leq x$ and $p_\ell^1(x)(r^{y_1}+\cdots+r^{y_k})=0$. W.l.o.g.\ assume that the sum $r^{y_1}+\cdots+r^{y_k}$ is reduced, i.e., even numbers of copies of the same relation (same formal basis element of $R_\ell$) are canceled. Note that different formal basis elements $r^y$ of $R_\ell$ can have the same grade $y$. By Lemma \ref{lem:graph_boundary_kernel}, we obtain that $\overline{p}_\ell^1(x)(r^{y_1}+\cdots+r^{y_k}) =0$
and, thus, $e^{y_1},\ldots,e^{y_k}$ is a cycle in the graph $\mathcal{G}(x)$. For any edge $e^{y_i}$ such that $y_i=x$ the corresponding relation $r^x$ is added at the grade $x$ during the algorithm. Since this only happens when the edge $e^x$ is added to $\mathcal{G}_{act}^x$, any edge $e^x$ on the cycle is in $\mathcal{G}_{act}^x$. If $y_i<x$ then there exists a $z\prec x$ such that $e^{y_i}\in\mathcal{G}(z)$. By Lemma \ref{prop:spanning_forest}, $\mathcal{F}_{act}^z$ is a spanning forest of $\mathcal{G}(z)$. If $e^{y_i}\notin\mathcal{F}_{act}^z$, then there exists a path $\gamma_1^{i},\ldots,\gamma^{i}_{l_i}$ in $\mathcal{F}_{act}^z$ connecting the endpoints of $e^{y_i}$. So, by construction of $\mathcal{G}_{act}^x$ in {\sc Collapse}, in any case, there exists a path in $\mathcal{G}_{act}^x$ connecting the endpoints of the edge $e^{y_i}$. Hence, we can replace any edge on the cycle $e^{y_1},\ldots,e^{y_k}$ that is not in $\mathcal{G}_{act}^x$ by a path in $\mathcal{G}_{act}^x$. This yields another cycle that lies completely in $\mathcal{G}_{act}^x$. If there are edges $e^x$ on the initial cycle, they will not be replaced and are also on the cycle in $\mathcal{G}_{act}^x$. But this implies that there is an edge $e^x$, added at the grade $x$, closing the cycle. This is a contradiction, since the algorithm does not add edges that close cycles in $\mathcal{G}_{act}^x$. Therefore, $y_i<x$ for all $1\leq i\leq k$. Recalling Definition \ref{def:radical}, we get that $r^{y_1}+\cdots+r^{y_k}\in \mathrm{Rad}(R_\ell)(x)$ and, moreover, $\ker\!\big(p_\ell^1(x)\big)\subseteq \mathrm{Rad}(R_\ell)(x)$. This implies that $\ker(p_\ell^1)\subseteq \mathrm{Rad}(R_\ell)$ and, thus, $p^1_\ell$ is a projective cover of $\ker(\alpha_\ell)$.
\end{proof}

\begin{theorem} \label{thm:correct_minimal_presentation}
The matrix $p_\ell^1$, computed by the algorithm {\sc PiRep}, is a minimal presentation of $C_\ell$.
\label{thm:p1min}
\end{theorem}

\begin{proof}
By Theorem \ref{thm:correct_presentation}, $p_\ell^1$ is a presentation. By Proposition \ref{prop:alpha_proj_cover} and \ref{prop:p1_proj_cover}, $\alpha_\ell\colon G_\ell\rightarrow C_\ell$ is a projective cover of $C_\ell$ and $p_\ell^1\colon R_\ell\rightarrow \ker(\alpha_\ell)$ is a projective cover of $\ker(\alpha_\ell)$. Therefore, by Proposition \ref{prop:minimal_eqq_projective_cover}, $p_\ell^1$ is a minimal presentation.
\end{proof}

\subsection{Correctness of computing $p_\ell^2$ and minimality}
\label{sec:p2correctness}

In this section, we show that the matrix $p_\ell^2$, as constructed by the {\sc PiRep} algorithm, yields an asymptotically minimal partial projective resolution of $C_\ell$ up to the second term. With the additional call of {\sc Reduce}, we will show that it is a minimal partial projective resolution of $C_\ell$ up to the second term.

\begin{proposition} \label{prop:exact_at2_1}
    $\im(p_\ell^2) \subseteq \ker(p_\ell^1)$.
\end{proposition}

\begin{proof}
Suppose $rr^{x}$ is a relation of relations in $RR_\ell$ such that $p^2_\ell(rr^{x})= r^{z_0}+\cdots+r^{z_s}$. The relation of relations is added by the algorithm if and only if the corresponding edges $e^{z_0},\ldots,e^{z_s}$ form a cycle in the graph $\mathcal{G}_{act}^{x}$ at the grade $x$. Since $\mathcal{G}_{act}^{x}$ is a subgraph of $\mathcal{G}(x)$, the edges $e^{z_0},\ldots,e^{z_s}$ also form a cycle in $\mathcal{G}(x)$. By Definition \ref{def:boundary_p}, we get that $\overline{p}_\ell^1(y)(r^{z_0}+\cdots+r^{z_s})=0$ and, by Lemma \ref{lem:graph_boundary_kernel}, also $p_\ell^1(y)(r^{z_0}+\cdots+r^{z_s})=0$ for all $y\geq x$. Hence, $p^1_\ell\circ p_\ell^2(rr^{x})=0$ and $\im(p_\ell^2)\subseteq \ker(p_\ell^1)$.
\end{proof}

\begin{proposition} \label{prop:exact_at2_2}
     $\ker(p_\ell^1) \subseteq \im(p_\ell^2)$.
\end{proposition}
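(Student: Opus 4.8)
The plan is to prove the sharper pointwise statement $\text{ker } p_\ell^1(x)\subseteq \text{im } p_\ell^2(x)$ for every $x\in P$; since $\text{ker } p_\ell^1$ and $\text{im } p_\ell^2$ are submodules of $R_\ell$, this implies the claim. By Definition~\ref{def:boundary_p}, $\overline{p}_\ell^1(x)$ is the boundary matrix of the graph $\mathcal{G}(x)$, so $\text{ker }\overline{p}_\ell^1(x)$ equals the cycle space $Z(\mathcal{G}(x))$, and by Proposition~\ref{lem:graph_boundary_kernel} this coincides with $\text{ker } p_\ell^1(x)$. Hence it suffices to show that the columns of $p_\ell^2$ born at grades $\le x$ span $Z(\mathcal{G}(x))$.

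I would argue by induction along the linear extension $x_0,\dots,x_{t_0}$, with inductive hypothesis $Z(\mathcal{G}(x_j))\subseteq\text{im } p_\ell^2(x_j)$ for all $j<i$, the heart of which is the structural identity
\[
Z\big(\mathcal{G}(x_i)\big)=\sum_{y\prec x_i}Z\big(\mathcal{G}(y)\big)+Z\big(\mathcal{G}_{act}^{x_i}\big),
\]
all terms viewed as subspaces of $R_\ell(x_i)$. The inclusion ``$\supseteq$'' is immediate since each $\mathcal{G}(y)$ with $y\prec x_i$ is a subgraph of $\mathcal{G}(x_i)$ (the maps of $\mathcal{G}$ are inclusions, Definition~\ref{def:boundary_p}) and $\mathcal{G}_{act}^{x_i}$ is a subgraph of $\mathcal{G}(x_i)$ (the remark following Definition~\ref{def:boundary_p}). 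For ``$\subseteq$'' I would first record that every edge of $\mathcal{G}(x_i)$ born at $x_i$ already lies in the spanning forest $\mathcal{F}_{act}^{x_i}$: such an edge is added by {\sc Collapse} or {\sc Generator} precisely because it does not close a cycle in $\mathcal{G}_{act}^{x_i}$, and since adding successive non-cycle-closing edges keeps the earlier ones bridges, it is a bridge of $\mathcal{G}_{act}^{x_i}$ and hence lies in every spanning forest; conversely an edge whose addition would close a cycle generates no relation, hence no edge of $\mathcal{G}(x_i)$.

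Now, using Proposition~\ref{prop:spanning_forest}, write an arbitrary $v\in Z(\mathcal{G}(x_i))$ as a sum of fundamental cycles $C_e=e+P_e$ of the non-tree edges $e$ of $\mathcal{G}(x_i)$ with respect to $\mathcal{F}_{act}^{x_i}$, where $P_e$ is the tree path joining the endpoints of $e$. By the previous paragraph each such $e$ is born at some $y<x_i$. If $e\in\mathcal{G}_{act}^{x_i}$, then $C_e\in Z(\mathcal{G}_{act}^{x_i})$ directly. If $e\notin\mathcal{G}_{act}^{x_i}$, pick $y'\prec x_i$ with $e\in\mathcal{G}(y')$; since $\mathcal{G}_{act}^{x_i}\supseteq\bigcup_{z\prec x_i}\mathcal{F}_{act}^{z}$, the edge $e$ is not in $\mathcal{F}_{act}^{y'}$, so its fundamental cycle $C'_e=e+P'_e$ with respect to $\mathcal{F}_{act}^{y'}$ lies in $Z(\mathcal{G}(y'))$, while $C_e+C'_e=P_e+P'_e$ is a cycle of $\mathcal{G}(x_i)$ supported on $E(\mathcal{F}_{act}^{x_i})\cup E(\mathcal{F}_{act}^{y'})\subseteq E(\mathcal{G}_{act}^{x_i})$, hence lies in $Z(\mathcal{G}_{act}^{x_i})$. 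Thus $C_e=C'_e+(C_e+C'_e)\in Z(\mathcal{G}(y'))+Z(\mathcal{G}_{act}^{x_i})$ in this case too, which establishes ``$\subseteq$''.

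To conclude, $Z(\mathcal{G}_{act}^{x_i})$ is spanned by exactly the columns that {\sc RelRel}$(x_i)$ appends to $p_\ell^2$: by construction these are the fundamental cycles of the non-tree edges of $\mathcal{G}_{act}^{x_i}$ with respect to its spanning forest $\mathcal{F}_{act}^{x_i}$, and such fundamental cycles form a basis of the cycle space. So $Z(\mathcal{G}_{act}^{x_i})\subseteq\text{im } p_\ell^2(x_i)$; and for $y\prec x_i$ the inductive hypothesis plus naturality of $p_\ell^2$ (together with $R_\ell(y\le x_i)$ being the inclusion of a sub-basis) give $Z(\mathcal{G}(y))\subseteq\text{im } p_\ell^2(y)\subseteq\text{im } p_\ell^2(x_i)$. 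The structural identity then yields $\text{ker } p_\ell^1(x_i)=Z(\mathcal{G}(x_i))\subseteq\text{im } p_\ell^2(x_i)$, closing the induction. The main obstacle I anticipate is the bookkeeping that edges born at $x_i$ always land in $\mathcal{F}_{act}^{x_i}$ — in particular the ``a bridge stays a bridge under further non-cycle-closing insertions'' lemma — together with keeping the various spanning forests straight; everything else is the standard spanning-forest description of the cycle space applied to the invariant of Proposition~\ref{prop:spanning_forest}.
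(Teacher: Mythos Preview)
Your proof is correct and follows essentially the same route as the paper's: both proceed by induction along the linear extension, identify $\ker p_\ell^1(x)$ with the cycle space of $\mathcal{G}(x)$ via Proposition~\ref{lem:graph_boundary_kernel}, and then decompose a cycle of $\mathcal{G}(x_i)$ into cycles already in some $\mathcal{G}(y)$ with $y\prec x_i$ (handled by the inductive hypothesis) plus a cycle lying in $\mathcal{G}_{act}^{x_i}$ (hit by the {\sc RelRel} columns at $x_i$), using the spanning forests $\mathcal{F}_{act}^{y}$ to perform the edge replacements. Your packaging via the structural identity and the fundamental-cycle basis is a cleaner rephrasing of the paper's direct edge-swapping argument; your explicit ``bridges stay bridges under further non-cycle-closing insertions'' justification is exactly the content the paper invokes implicitly inside the proof of Proposition~\ref{prop:spanning_forest} and in the minimality step of Proposition~\ref{prop:p1_proj_cover}.
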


\begin{proof}
We proceed by induction on the grades in the linear extension of $P$. Assume that $\ker\!\big(p_\ell^1(z)\big)\subseteq \im\!\big(p_\ell^2(z)\big)$ for all $z<x$. This is obviously true for the minimal elements in $P$.
Let $r^{y_1},\ldots,r^{y_n}\in R_\ell$ so that $y_i\leq x$ for all $1\leq i\leq n$ and $p^1_\ell(x)(r^{y_1}+\cdots+r^{y_n})=0$. By Lemma \ref{lem:graph_boundary_kernel}, $e^{y_1},\ldots,e^{y_n}$ form cycle in $\mathcal{G}(x)$. By the proof of Proposition \ref{prop:p1_proj_cover}, $\ker\!\big(p_\ell^1(x)\big)\subseteq \mathrm{Rad}(R_\ell)(x)$. Thus, for each $1\leq i\leq n$, we have $y_i<x$ and $e^{y_i}\in \mathcal{G}(y)$ for some $y\prec x$.  Suppose $e^{y_i}\notin \mathcal{G}_{act}^x$. Then $e^{y_i}\notin \mathcal{F}_{act}^y$. Since, by Lemma \ref{prop:spanning_forest}, $\mathcal{F}_{act}^y$ is a spanning forest of $\mathcal{G}(y)$, there exists a path $e^{z^i_1},\ldots,e^{z^i_{k_i}}$ in $\mathcal{F}_{act}^y$ that connects the endpoints of $e^{y_i}$. Hence, this path together with $e^{y_i}$ forms a cycle in $\mathcal{G}(y)$. By Lemma \ref{lem:graph_boundary_kernel}, $p_\ell^1(y)(r^{y_i}+r^{z^i_1}+\cdots+r^{z^i_{k_i}})=0$. Thus, by induction, there exists $rr_i\in RR_\ell(y)$ such that $p^2_\ell(y)(rr_i)=r^{y_i}+r^{z^i_1}+\cdots+r^{z^i_{k_i}}$. We now replace the edge $e^{y_i}$ in the cycle $e^{y_1},\ldots,e^{y_n}$ by the path $e^{z^i_1},\ldots,e^{z^i_{k_i}}$ which lies in $\mathcal{G}_{act}^{x}$. If we replace each edge $e^{y_i}\notin \mathcal{G}_{act}^x$ by such a path, we obtain a cycle $e^{z_1},\ldots,e^{z_l}$ in $\mathcal{G}_{act}^x$ since 
\begin{equation*}
\begin{aligned}
p_\ell^1(x)\big(r^{z_1}+\cdots+r^{z_l}\big)=&p_\ell^1(x)\big(\sum_{r^{y_i}\in\mathcal{G}_{act}^x}r^{y_i}+\sum_{r^{y_i}\notin \mathcal{G}_{act}^x}r^{z^i_1}+\cdots+r^{z^i_{k_i}}\big)\\&=p_\ell^1(x)\big(\sum_{r^{y_i}\in\mathcal{G}_{act}^x}r^{y_i}\big)+\sum_{r^{y_i}\notin \mathcal{G}_{act}^x}p_\ell^1(x)\big(r^{z^i_1}+\cdots+r^{z^i_{k_i}}\big)\\&=p_\ell^1(x)\big(\sum_{r^{y_i}\in\mathcal{G}_{act}^x}r^{y_i}\big)+p_\ell^1(x)\big(\sum_{r^{y_i}\notin\mathcal{G}_{act}^x}r^{y_i}\big)\\&=p_\ell^1(x)(r^{y_1}+\cdots+r^{y_n})=0 .
\end{aligned}
\end{equation*}
The algorithm {\sc RelRel} computes a cycle basis for $\mathcal{G}_{act}^x$ and adds the computed cycles to $p_\ell^2$. Hence, there exists $rr\in RR_\ell(x)$ such that $p^2_\ell(x)(rr)=r^{z_1}+\cdots+r^{z_l}$. Combining this cycle with the cycles for the replaced edges yields
\begin{equation*}
\begin{aligned}
p_\ell^2(x)(rr+\sum_{r^{y_i}\notin \mathcal{G}_{act}^x}rr_i)&=p_\ell^2(x)(rr)+\sum_{r^{y_i}\notin \mathcal{G}_{act}^x}p_\ell^2(x)(rr_i)\\&=r^{z_1}+\cdots+r^{z_l}+\sum_{r^{y_i}\notin \mathcal{G}_{act}^x}r^{y_i}+r^{z^i_1}+\cdots+r^{z^i_{k_i}}\\&=\sum_{r^{y_i}\in\mathcal{G}_{act}^x}r^{y_i}+\sum_{r^{y_i}\notin \mathcal{G}_{act}^x}r^{z^i_1}+\cdots+r^{z^i_{k_i}}\\&+\sum_{r^{y_i}\notin \mathcal{G}_{act}^x}r^{y_i}+\sum_{r^{y_i}\notin \mathcal{G}_{act}^x}r^{z^i_1}+\cdots+r^{z^i_{k_i}}\\&=r^{y_1}+\cdots+r^{y_n} .
\end{aligned}
\end{equation*}
Therefore, $r^{y_1}+\cdots+r^{y_n}\in\im \!\big(p_\ell^2(x))$ and, moreover, $\ker(p_\ell^1)\subseteq \im(p_\ell^2)$.
\end{proof}

\begin{proposition} \label{prop:asymptotic_optimality}
There exists a family of poset towers $K^m\colon P^m\rightarrow \mathbf{SCpx}$, $m\geq 2$, with $n=\Theta(m)$ simplex generators and $t_1=\Theta(m^2)$ edges in the Hasse diagram of $P^m$, such that the second term $RR_0$ in a minimal projective resolution of $C_0(K^m)$ has $\Theta(nt_1)$ elementary projective summands. In particular, the bound $O(nt_1)$ on the number of columns of $p_\ell^2$ computed by {\sc PiRep} without {\sc Reduce} is asymptotically optimal.
\end{proposition}

\begin{proof}
Let $A=\{a_1,\ldots,a_m\}$, $B=\{b_1,\ldots,b_m\}$, $C=\{c_1,\ldots,c_m\}$ and let $P^m=A\cup B\cup C$ be the poset with relations $a_i\prec b_j\prec c_k$ for all $1\leq i,j,k\leq m$. Thus $P^m$ has $t_0=3m$ grades and $t_1=2m^2$ edges in its Hasse diagram. We define a poset tower $K^m\colon P^m\rightarrow \mathbf{SCpx}$ consisting of a single vertex $v$ at every grade and identity maps on this vertex along every edge. Equivalently, $K^m(x)=\{v\}$ for all $x\in P^m$, and $K^m(x\leq y)(v)=v$ whenever $x\leq y$. Then $v$ has exactly one simplex generator $g_v^{a_i}$ at each grade $a_i\in A$ and no simplex generators at the grades in $B\cup C$. Hence $\mathcal{S}_0=\{g_v^{a_1},\ldots,g_v^{a_m}\}$, $\mathcal{C}=\emptyset$ and the input size is $n=|\mathcal{S}|+|\mathcal{C}|=m$.

At a grade $b_j$, the vector space $G_0(b_j)$ has basis $g_v^{a_1},\ldots,g_v^{a_m}$ whereas $C_0(K_m)(b_j)$ is one-dimensional. The map $\alpha_0(b_j)\colon G_0(b_j)\rightarrow C_0(K_m)(b_j)$ sends every basis element
$g_v^{a_i}$ to $v$. Therefore, $\ker\!\big(\alpha_0(b_j)\big)$ has dimension $m-1$. Since $\ker\!\big(\alpha_0(a_i)\big)=0$ for all $1\leq i\leq m$, there must be $m-1$ relations born at $b_j$. Since we proved in Theorem \ref{thm:p1min} that {\sc PiRep} computes a minimal presentation this can also be seen by applying the algorithm, which has to identify the $n$ copies $(v,g_v^{a_i})$ at each grade $b_j$. The algorithm adds edges until we obtain a spanning tree $T_j$ on the vertices $g_v^{a_1},\ldots,g_v^{a_m}$, which requires exactly $n-1$ edges.

Now consider a grade $c_k$. The relation graph at $c_k$ contains the union of the $m$ spanning trees $T_1\cup \cdots \cup T_m$ on the same set of $m$ vertices. The edges are indexed by the relations born at the different grades $b_j$, so they are distinct columns of $p_0^1$, even if some of them have the same endpoints. The graph $T_1\cup\cdots\cup T_m$ is connected and has $m$ vertices and $m(m-1)$ edges. Therefore, its cycle space has dimension $m(m-1)-m+1=(m-1)^2$. By Lemma~\ref{lem:graph_boundary_kernel}, this cycle space is equal to
$\ker\!\big(p_0^1(c_k)\big)$. Hence $\dim \ker\!\big(p_0^1(c_k)\big)=(m-1)^2$.

Moreover, for every $x<c_k$ we have $\ker\!\big(p_0^1(x)\big)=0$: this is clear at the grades $a_i$, where no relations are present, and at the grades $b_j$, where the relation graph is a tree. It follows that any projective cover of $\ker(p_0^1)$ must contain at least $(m-1)^2$ elementary projective summands born at $c_k$. Since the grades $c_1,\ldots,c_m$ are pairwise incomparable, summands born at one of them cannot contribute to any other one. Thus, the second term $RR_0$ in a minimal projective resolution has $m(m-1)^2$ elementary projective summands.

Finally, since $n=m$ and $t_1=2m^2$, we obtain $m(m-1)^2=\Theta(m^3)=\Theta(nt_1)$. This proves that the $O(nt_1)$ bound on the number of columns of $p_\ell^2$ is asymptotically optimal.
\end{proof}

\begin{remark}
In the following Proposition \ref{prop:proj_cover_p2}, we show that, with the additional call of the routine {\sc Reduce}, the algorithm computes a minimal resolution up to the second term. In other words, the algorithm computes a projective cover $p_\ell^2$ of $\ker(p_\ell^1)$. This step can also be interpreted geometrically in terms of the $\overline{P}$-filtered graph $\mathcal{G}$ represented by $\overline{p}_\ell^1$. The kernel of $\overline{p}_\ell^1$ is the cycle space of $\mathcal{G}$ and equal to the kernel of $p_\ell^1$ by Lemma \ref{lem:graph_boundary_kernel}. Thus, the task of computing a projective cover of $\ker(p_\ell^1)\colon P\rightarrow \mathbf{Vec}$ is equivalent to computing a minimal set of generators of $H_1(\mathcal{G})\colon P\rightarrow \mathbf{Vec}$. Each column of $p_\ell^2$, added by the algorithm, can be viewed as a $2$-cell filling a cycle in $\mathcal{G}$. The set of $2$-cells generated in this way is minimal if a $2$-cell added at grade $x$ never closes a void, i.e., never creates a class in $H_2$. This is exactly a reformulation of the projective cover condition. The kernel of $p_\ell^2$ corresponds to $H_2$ classes or voids, and if we close a void at grade $x$ by adding a $2$-cell, then this $2$-cell is a linear combination of already existing $2$-cells and thus redundant. Reducing the matrix $A^{\leq x}$ in the {\sc Reduce} routine and checking if columns added at grade $x$ can be reduced to zero is exactly checking if the $2$-cell closes a void. Because this is essentially a two-dimensional problem, it cannot be solved as efficiently as the other tasks. It is thus remarkable that we can still compute an asymptotically optimal $p_\ell^2$ without checking this condition.
\end{remark}

\begin{proposition}\label{prop:proj_cover_p2}
If {\sc Reduce} is called, $p_\ell^2$ is a projective cover of $\ker(p_\ell^1)$.  
\end{proposition}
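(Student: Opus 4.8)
The plan is to build on the already-established facts that $p_\ell^2$ is a morphism of projective modules (its source $RR_\ell$ is projective by construction) and that $\operatorname{im} p_\ell^2 = \operatorname{ker} p_\ell^1$ by Propositions~\ref{prop:exact_at2_1} and~\ref{prop:exact_at2_2}, so $p_\ell^2 \colon RR_\ell \to \operatorname{ker} p_\ell^1$ is already an epimorphism of projectives. By the characterization used throughout Section~\ref{sec:p1minimality} (Definition~\ref{def:projective_cover}), it then remains only to show the kernel condition $\operatorname{ker} p_\ell^2 \subseteq \operatorname{Rad}(RR_\ell)$ after {\sc Reduce} has been called. First I would unwind what {\sc Reduce} actually does: at each grade $x$ it takes the submatrix $A$ of $p_\ell^2$ whose columns are the relations-of-relations $rr^y$ with $y \le x$, column-reduces $A$ to echelon form respecting the linear extension, and deletes a column $rr^x$ iff it becomes zero in this reduction. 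The key point is that a column $rr^x$ is deleted precisely when the cycle it represents in $\mathcal{G}_{act}^x$ is (over $\mathbb{Z}_2$) a sum of cycles born at strictly earlier grades $y < x$ — i.e. when, in $RR_\ell(x)$, the generator $rr^x$ lies in the span of generators $rr^y$ with $y < x$, which is exactly $\operatorname{Rad}(RR_\ell)(x)$.

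The main steps would be as follows. (1) Fix $x \in P$ and take $rr^{y_1}, \ldots, rr^{y_k} \in RR_\ell$ with $y_i \le x$, pairwise distinct, and $p_\ell^2(x)(rr^{y_1} + \cdots + rr^{y_k}) = 0$. I want to show all $y_i < x$. (2) Suppose for contradiction some $y_i = x$; reorder so that $rr^{y_k} = rr^x$ is a "latest" generator of grade exactly $x$ among the $rr^{y_i}$'s. The relation $p_\ell^2(x)(rr^{y_1} + \cdots + rr^{y_k}) = 0$ expresses the column of $rr^x$ as a $\mathbb{Z}_2$-sum of columns of $rr^{y_1}, \ldots, rr^{y_{k-1}}$, all of grade $\le x$, hence all present in the submatrix $A$ reduced at grade $x$. (3) The crucial claim: if {\sc Reduce} did \emph{not} delete $rr^x$, then its column has a pivot row that is not shared by any column processed before it in the echelon reduction; since the linear extension orders columns so that all $rr^{y_i}$ with $y_i < x$ and all $rr^{y_i}$ with $y_i = x$ appearing earlier in the extension come before $rr^x$, the existence of a private pivot contradicts the column of $rr^x$ being a sum of those earlier columns. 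Hence $rr^x$ would have been reduced to zero and deleted — contradicting that $rr^x$ is still a generator of $RR_\ell$. Therefore no $y_i$ equals $x$, all $y_i < x$, and $rr^{y_1} + \cdots + rr^{y_k} \in \operatorname{Rad}(RR_\ell)(x)$. (4) Since $x$ was arbitrary, $\operatorname{ker} p_\ell^2 \subseteq \operatorname{Rad}(RR_\ell)$, so $p_\ell^2$ is a projective cover of $\operatorname{ker} p_\ell^1$.

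I expect the main obstacle to be step~(3): making precise the bookkeeping between the echelon reduction of the accumulated submatrix $A$ and the grading of $RR_\ell$. One has to be careful that {\sc Reduce} is invoked at \emph{every} grade (via the optional call inside {\sc RelRel}), so that by the time we reach grade $x$ the columns of grade $< x$ have already been brought to a form with distinct pivots and none of them has been (re)zeroed; one then argues that adding the grade-$x$ columns and re-reducing cannot create a new dependency among strictly-earlier columns, so the "private pivot or deleted" dichotomy for $rr^x$ is exactly as claimed. A clean way to organize this is to prove, by induction on the linear extension, the invariant: \emph{after the call of {\sc Reduce} at grade $x_i$, the surviving columns $rr^y$ with $y \le x_i$ have pairwise distinct pivot rows in the reduced submatrix $A$}; the kernel statement for $p_\ell^2(x)$ then follows immediately from this invariant together with the fact that a zero column is exactly one that got deleted. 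A secondary, milder point is to confirm that deletion of a column does not disturb exactness (Propositions~\ref{prop:exact_at2_1}, \ref{prop:exact_at2_2}) — but this is automatic since {\sc Reduce} only removes columns that are $\mathbb{Z}_2$-combinations of retained ones, leaving the image $\operatorname{im} p_\ell^2 = \operatorname{ker} p_\ell^1$ unchanged.
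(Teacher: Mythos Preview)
Your proposal is correct and follows essentially the same strategy as the paper's own proof: take a kernel element at grade $x$, suppose some contributing generator has grade exactly $x$, and derive a contradiction from the fact that {\sc Reduce} would have deleted that column. The paper's version is terser---it simply asserts that after {\sc Reduce} the surviving columns $rr^z$ with $z\le x$ are linearly independent and concludes immediately---whereas you spell out the pivot/echelon bookkeeping and also explicitly note that deleting redundant columns preserves $\operatorname{im}p_\ell^2=\ker p_\ell^1$, a point the paper leaves implicit.
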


\begin{proof}
Let $x\in P$ and $rr^{y_1},\ldots,rr^{y_n}\in RR_\ell$ such that $y_i\leq x$ and $p^2_\ell(x)(rr^{y_1}+\cdots+rr^{y_n})=0$. At the grade $x$ in the algorithm, at the start of {\sc Reduce}, all $rr^{y_i}$ are already in $RR_\ell$. The matrix $A^{\leq x}$ representing $p_\ell^2(x)$ consists of all columns $rr^{z}$ of $p_\ell^2$, such that $z\leq x$, in an order respecting the linear extension of $P$. Thus, the columns $rr^{z}$ such that $z=x$ come last in $A^{\leq x}$. At the end of {\sc Reduce}, the matrix $A^{\leq x}$ is column reduced from left to right (respecting the order of the columns). 

We now prove by contradiction that $y_i<x$ for all $i$. Suppose $rr^{y_i}$ is the rightmost column in $A^{\leq x}$ such that $y_i=x$. The assumption $p^2_\ell(x)(rr^{y_1}+\cdots+rr^{y_n})=0$ implies that the column $rr^{y_i}$ is a linear combination of the columns $\{rr^{y_1},\ldots,rr^{y_n}\}\setminus rr^{y_i}$. But, since all columns $\{rr^{y_1},\ldots,rr^{y_n}\}\setminus rr^{y_i}$ come before $rr^{y_i}$ in $A^{\leq x}$, the column $rr^{y_i}$ must have been reduced to zero in {\sc Reduce}. Therefore, at the end of {\sc Reduce} it is deleted from $p_\ell^2$ contradicting $rr^{y_i}\in RR_\ell$.      

We conclude that $y_i<x$ for all $1\leq i\leq n$ and $rr^{y_1}+\cdots+rr^{y_n}\in \mathrm{Rad}(RR_\ell)(x)$. Therefore, $\ker(p_\ell^2)\subseteq \mathrm{Rad}(RR_\ell)$ and $p_\ell^2$ is a projective cover of $\ker(p_\ell^1)$.
\end{proof}

\begin{theorem} \label{thm:minimal_resolution}
Let $p_\ell^1$ and $p_\ell^2$ be the matrices computed by the algorithm {\sc PiRep}. Then the sequence
\begin{equation*}
\begin{tikzcd}
0 & C_\ell(K) \arrow[l] & G_\ell \arrow[l,swap,"\alpha_\ell"] & R_\ell \arrow[l,swap,"p_\ell^1"] & RR_\ell \arrow[l,swap,"p_\ell^2"]
\end{tikzcd}
\end{equation*}
is an asymptotically minimal projective resolution of $C_\ell(K)$ up to the second term. If {\sc Reduce} is called in {\sc RelRel}, then it is a minimal projective resolution of $C_\ell(K)$ up to the second term.
\end{theorem}

\begin{proof}
By construction $G_\ell$, $R_\ell$ and $RR_\ell$ are projective.
By Proposition \ref{prop:alpha_epi}, \ref{prop:p_exact_2}, \ref{prop:exact_at2_1} and \ref{prop:exact_at2_2} the sequence is exact. Hence, it is a projective resolution up to the second term. By Theorem \ref{thm:p1min}, $p_\ell^1$ is a minimal presentation of $C_\ell(K)$ and, by Proposition \ref{prop:asymptotic_optimality}, $p_\ell^2$ is asymptotically minimal. If {\sc Reduce} is called in {\sc RelRel}, then, by Proposition \ref{prop:proj_cover_p2}, $p_\ell^2$ is a projective cover of $\ker(p_\ell^1)$. Therefore, by Proposition \ref{prop:minimal_eqq_projective_cover}, together with $p_\ell^1$ it forms a minimal projective resolution up to the second term.
\end{proof}

\subsection{Correctness of computing $f_\ell^1$ and $\vartheta_{\ell+1}$} \label{subsec:graph_equations}

After proving the correctness of the computation of $p_\ell^1$, $p_\ell^2$, and $f^0_\ell$ by the {\sc PiRep} algorithm, the only missing ingredients for a PiRep of $H_\ell(K)$ are the matrices for $f^1_{\ell}$ and $\vartheta_{\ell+1}$ in the block matrices in \eqref{eq:projective_complex_homology}. The morphisms $f^1_{\ell}$ and $\vartheta_{\ell+1}$ need to satisfy $p_{\ell\minus 1}^1\circ f^1_{\ell}=f^0_\ell\circ p_\ell^1$ and $p_{\ell\minus 1}^1\circ \vartheta_{\ell+1}=f^0_\ell\circ f^0_{\ell+1}$, respectively. The routine {\sc Lift} efficiently solves for the unknown maps $f_\ell^1$ and $\vartheta_{\ell+1}$ in these equations by exploiting the special structure of $p_{\ell\minus 1}^1$. It solves a set of linear systems of equations with coefficient matrices induced by the edges of certain spanning trees. In this section, we prove that we can reduce the problem of solving these equations of morphisms of projective modules to such linear systems, which will then establish the correctness of the routine {\sc Lift}.  

Both the construction of $f^1_{\ell}$ and $\vartheta_{\ell+1}$ can be described as finding a solution of a $P$-graded linear system $A\circ X=B$, as defined in Definition \ref{def:P_graded_system}. For the unknown $X=f^1_{\ell}$, we set $A=p_{\ell\minus 1}^1$ and $B=f^0_\ell\circ p_\ell^1$. For the unknown $X=\vartheta_{\ell+1}$, we set $A=p_{\ell\minus 1}^1$ and $B=f^0_\ell\circ f^0_{\ell+1}$. As discussed in Section \ref{sec:pirep_theory}, Proposition \ref{prop:existence_resolutions}(2) and (3) imply that these systems have solutions. By Proposition \ref{prop:eliminate_constraints}, we can solve a $P$-graded linear system by solving $m$ standard linear systems $A^{\leq y_j}X^{\leq y_j}_j=B_j$ for submatrices $A^{\leq y_j}$ of $A$. In general, solving the $m$ linear systems in Proposition \ref{prop:eliminate_constraints} would cost cubic time per system, but, in our case, we can exploit the special structure of the coefficient matrix $A=p_{\ell\minus 1}^1$ to solve them in linear time. By Theorem \ref{thm:graph_presentation_matrix}, we obtain $p_{\ell\minus 1}^1$ from a matrix $\overline{p}_{\ell\minus 1}^1$ by removing a distinguished row $\Omega$ where $\overline{p}_{\ell\minus 1}^1$ is the boundary matrix of a $\overline{P}$-filtered graph $\mathcal{G}\colon \overline{P}\rightarrow \Delta\mathbf{Cpx}_{\leq 1}$. This means that our coefficient matrix $A$ has either one or two non-zero entries per column and its columns are labeled by the edges in the $\overline{P}$-filtered graph. First, we show that we can directly work with the coefficient matrix induced by $\overline{p}_{\ell\minus 1}^1$.  

\begin{proposition} \label{prop:linear_system_to_graph}
Let $A=\begin{pmatrix} T & S \end{pmatrix}$ be a block matrix, where $T$ is a block of columns with two non-zero entries and $S$ is a block of columns with one non-zero entry. Let $\overline{A}\coloneqq \begin{pmatrix}T & S \\ \overline{0} & \overline{1} \end{pmatrix}$ be a block matrix, where $\overline{0}=\begin{pmatrix} 0 & \cdots & 0 \end{pmatrix}$ and $\overline{1}=\begin{pmatrix} 1 & \cdots & 1 \end{pmatrix}$. Moreover, let $\overline{B}\coloneqq \begin{pmatrix}B \\ \overline{1}B\end{pmatrix}$. Then $AX=B$ if and only if $\overline{A}X=\overline{B}$.
\end{proposition}

\begin{proof}
Since $A$ and $B$ are obtained by removing the last row of $\overline{A}$ and $\overline{B}$, respectively, it is clear that a solution $X$ of $\overline{A}X=\overline{B}$ is also a solution of $AX=B$. Conversely, let $X=\begin{pmatrix}X_T \\ X_S\end{pmatrix}$ be a solution of $AX=\begin{pmatrix} T & S \end{pmatrix}\begin{pmatrix}X_T \\ X_S\end{pmatrix}=T X_T + S X_S=B$. Because $T$ has exactly two non-zero entries per column and $S$ has exactly one non-zero entry per column, we obtain $\overline{1}B=\overline{1}T X_T + \overline{1}S X_S=\overline{0}X_T + \overline{1}X_S=\overline{1}X_S$. Therefore, $\overline{A}X=\begin{pmatrix}T & S \\ \overline{0} & \overline{1} \end{pmatrix}\begin{pmatrix}X_T \\ X_S\end{pmatrix}=\begin{pmatrix}T X_T + S X_S \\ \overline{1}X_S\end{pmatrix}=\begin{pmatrix}B \\ \overline{1}B\end{pmatrix}=\overline{B}$. 
\end{proof}

By Definition \ref{def:boundary_p}, if $A=p_{\ell\minus 1}^1$, we obtain $\overline{A}=\overline{p}_{\ell\minus 1}^1$ for the matrix in Proposition \ref{prop:linear_system_to_graph}. So we can consider the system $\overline{A}X=\overline{B}$, where $\overline{A}=\overline{p}_{\ell\minus 1}^1$ and $\overline{B}$ is constructed from $B=f^0_\ell\circ p_\ell^1$ or $B=f^0_\ell\circ f^0_{\ell+1}$ as in Proposition \ref{prop:linear_system_to_graph}. For $\overline{A}=\overline{p}_{\ell\minus 1}^1$, the submatrices $\overline{A}^{\leq y_j}$ of Proposition \ref{prop:eliminate_constraints} are exactly the boundary matrices of $\mathcal{G}(y_j)$.  The following proposition shows that it is enough to solve the system $\overline{A}^{\leq y_j}X^{\leq y_j}_j=\overline{B}_j$ for a submatrix of $\overline{A}^{\leq y_j}$ and a subvector of $X^{\leq y_j}_j$ corresponding to the edges of a spanning forest.

\begin{proposition} \label{prop:graph_equation}
Let $Ax=b$ be a system of linear equations such that $A$ is the boundary matrix of a graph $G$ (without self-loops) and $b\in \im(A)$. Let $F=\bigcup_{i=1}^n F^i$ be a spanning forest of $G$ with connected components $F^1,\ldots,F^n$. Moreover, let $A^i$ be the submatrix of $A$ corresponding to the edges and vertices in $F^i$, $x^i$ the subvector of variables $x$ corresponding to the edges in $F^i$, $x^r$ the subvector of $x$ corresponding to edges in $G\setminus F$, and $b^i$ the subvector of $b$ corresponding to the vertices in $F^i$. Then $b^i\in\im(A^i)$ and $A^ix^i=b^i$ and $x^r=0$ implies $Ax=b$.  
\end{proposition}

\begin{proof}
Let $G^1,\ldots,G^n$ be the connected components of the graph $G$. Let $\hat{A}^i$ be the submatrix of $A$ corresponding to the vertices and edges in $G^i$, $\hat{x}^i$ the subvector of variables $x$ corresponding to the edges in $G^i$, and $\hat{b}^i$ the subvector of $b$ corresponding to the vertices in $G^i$. After reordering the columns and rows of $A$, $x$ and $b$ we can write the system as
\begin{equation*}
Ax=
\begin{pmatrix}
\hat{A}^1 & \cdots & 0 \\
\vdots & \ddots & \vdots \\
0 & \cdots & \hat{A}^n
\end{pmatrix}
\begin{pmatrix}
\hat{x}^1 \\
\vdots \\
\hat{x}^n \\
\end{pmatrix}=
\begin{pmatrix}
\hat{A}^1\hat{x}^1 \\
\vdots \\
\hat{A}^n\hat{x}^n \\
\end{pmatrix}=
\begin{pmatrix}
\hat{b}^1 \\
\vdots \\
\hat{b}^n 
\end{pmatrix}=b ,
\end{equation*}
where the coefficient matrix is in block-diagonal form because the connected components $G^1,\ldots,G^n$ of $G$ partition the vertices and edges into a disjoint union. Therefore, $Ax=b$ has a solution if and only if $\hat{A}^i\hat{x}^i=\hat{b}^i$ has a solution for all $1\leq i\leq n$. In other words, $b\in\im(A)$ if and only if $\hat{b}^i\in\im(\hat{A}^i)$. 

Since $F$ is a spanning forest of $G$, each connected component $F^i$ of $F$ must be a spanning tree of some connected component of $G$. W.l.o.g.\ assume $F^i$ is a spanning tree of the component $G^i$. Hence, $A^i$ is the submatrix of $\hat{A}^i$ corresponding to the edges in $F^i$. If $C^i$ is the submatrix of $\hat{A}^i$ corresponding to the edges in $G^i\setminus F^i$, then we can write $\hat{A}^i=\begin{pmatrix}A^i & C^i \end{pmatrix}$. Every edge in $G^i\setminus F^i$ closes a cycle in $G^i$ with the unique path connecting its endpoints in the spanning tree $F^i$. Thus, every column of $C^i$ is a linear combination of columns in $A^i$, given by the columns corresponding to the edges on this path. This implies that $\im(A^i)=\im(\hat{A}^i)$ and, since $G^i$ and $F^i$ have the same vertices, $b^i=\hat{b}^i\in \im(\hat{A}^i)=\im(A^i)$.  

After reordering the columns and rows of $A$, $x$ and $b$ again, we can write the system as
\begin{equation*}
Ax=
\begin{pmatrix}
A^1 & \cdots & 0 & C^1 & \cdots & 0 \\
\vdots & \ddots & \vdots & \vdots & \ddots & \vdots \\
0 & \cdots & A^n & 0 & \cdots & C^n
\end{pmatrix}
\begin{pmatrix}
x^1 \\
\vdots \\
x^n \\
x^r
\end{pmatrix}=
\begin{pmatrix}
b^1 \\
\vdots \\
b^n 
\end{pmatrix}=b ,
\end{equation*}
where $C^1,\ldots, C^n$ are the submatrices and $x^r$ is the subvector corresponding to all edges in $G\setminus F$. Since the columns of $A^1,\ldots,A^n$ correspond to edges in the connected components of the spanning forest $F$, the block of $A$ containing them is block-diagonal. Therefore, $A^1x^1=b^1,\ldots, A^nx^n=b^n$ and $x^r=0$ implies $Ax=b$.  
\end{proof}

By Proposition \ref{prop:graph_equation}, we can reduce the problem of solving $\overline{A}^{\leq y_j}X_j^{\leq y_j}=\overline{B}_j$ to the problem of solving equations $F^kx=b^k$ where the $F^k$ correspond to the connected components of a spanning forest of $\mathcal{G}(y_j)$. In Appendix \ref{app:tree_solve}, we discuss the routine {\sc TreeSolve}, which can solve such a linear system $F^kx=b^k$ in linear time. The routine {\sc Lift} computes solutions of $p_{\ell\minus 1}^1\circ f^1_{\ell}=f^0_\ell\circ p_\ell^1$ and $p_{\ell\minus 1}^1\circ \vartheta_{\ell+1}=f^0_\ell\circ f^0 _{\ell+1}$ by solving the systems $F^kx=b^k$ using the spanning forests $\mathcal{F}^{y_j}_{act}$ of $\mathcal{G}(y_j)$ maintained by the {\sc PiRep} algorithm and the routine {\sc TreeSolve}.

\begin{theorem} \label{thm:GraphSolve_correct}
The routine {\sc Lift} computes morphisms $f^1_\ell$ and $\vartheta_{\ell+1}$ such that $p_{\ell\minus 1}^1\circ f^1_\ell=f^0_\ell\circ p_\ell^1$ and $p_{\ell\minus 1}^1\circ\vartheta_{\ell+1}=f^0_\ell\circ f^0_{\ell+1}$.   
\end{theorem}

\begin{proof}
By Lemma \ref{prop:spanning_forest}, for each $x\in P$, after the call of {\sc Generator}, $\mathcal{F}^x_{act}$ is a spanning forest of $\mathcal{G}(x)$ and all updates of $p^1$ and $f^0$ at grade $x$ have already been computed. By Proposition \ref{prop:linear_system_to_graph}, we can replace the coefficient matrix $p_{\ell\minus 1}^1$ in $p_{\ell\minus 1}^1\circ f^1_\ell=f^0_\ell\circ p_\ell^1$ and $p_{\ell\minus 1}^1\circ\vartheta_{\ell+1}=f^0_\ell\circ f^0_{\ell+1}$ by $\overline{p}_{\ell\minus 1}^1$ if we also add an additional row $\overline{1}\circ f^0_\ell\circ p_\ell^1$ and $\overline{1}\circ f^0_\ell\circ f^0_{\ell+1}$, corresponding to the row $\Omega$ in $\overline{p}_{\ell\minus 1}^1$, to the right-hand side. 

A matrix representation of the morphism $f^1_\ell\colon R_\ell\rightarrow R_{\ell\minus 1}$ has columns indexed by the relations $r^y$ spanning $R_\ell$ and rows indexed by the relations spanning $R_{\ell\minus 1}$. Let $r^{y_1},\ldots,r^{y_s}$ be the relations spanning $R_\ell$. Then we can rewrite the equation $\overline{p}_{\ell\minus 1}^1\circ f^1_\ell=\begin{pmatrix}f^0_\ell\circ p_\ell^1 \\ \overline{1}\circ f^0_\ell\circ p_\ell^1\end{pmatrix}$ as $\overline{p}_{\ell\minus 1}^1\circ f^1_\ell[:,r^{y_j}]=\begin{pmatrix}f^0_\ell\circ p_\ell^1[:,r^{y_j}] \\ \overline{1}\circ f^0_\ell\circ p_\ell^1[:,r^{y_j}]\end{pmatrix}$ for all $1\leq j\leq s$. By Proposition \ref{prop:eliminate_constraints}, we can drop all columns of $\overline{p}_{\ell\minus 1}^1$ and rows of $f^1_\ell[:,r^{y_j}]$ which are indexed by $r^z\in R_{\ell\minus 1}$ such that $z\nleq y_j$. The remaining columns of $\overline{p}_{\ell\minus 1}^1$ correspond exactly to the edges in $\mathcal{G}(y_j)$. The equation becomes an ordinary system of linear equations where the coefficient matrix is the boundary matrix of $\mathcal{G}(y_j)$, the variables correspond to the edges in $\mathcal{G}(y_j)$ and the right-hand side corresponds to the vertices of $\mathcal{G}(y_j)$. At the grade $x=y_j$ in the algorithm, where we add $r^{y_j}$, all $r^z$ in $R_{\ell\minus 1}$ with $z\leq y_j$ are already present. Therefore, all ingredients of the linear system at $y_j$ are available. By Proposition \ref{prop:graph_equation}, we can solve such a system by restricting the coefficient matrix to the submatrix induced by a spanning forest and instead solve the system for each connected component of the forest. 

In step $1(a)$, we compute the right-hand side with the additional row corresponding to $\Omega$. Note that all non-zero entries in the vector $f^0_\ell\circ p_\ell^1[:,r^{y_j}]$ correspond to $g_\sigma^z$ with $z\leq y_j$, i.e., to vertices in $\mathcal{F}_{act}^{y_j}$. In step $1(b)$, we call the routine {\sc TreeSolve} (see Appendix \ref{app:tree_solve}) for each connected component $C$ of the spanning forest $\mathcal{F}_{act}^{y_j}$. By Theorem \ref{prop:tree_solve}, this yields a solution of $\overline{p}_{\ell\minus 1}^1\circ f^1_\ell[:,r^{y_j}]=\begin{pmatrix}f^0_\ell\circ p_\ell^1[:,r^{y_j}] \\ \overline{1}\circ f^0_\ell\circ p_\ell^1[:,r^{y_j}]\end{pmatrix}$ and, thus, also of $p_{\ell\minus 1}^1\circ f^1_\ell[:,r^{y_j}]=f^0_\ell\circ p_\ell^1[:,r^{y_j}]$. After processing all grades of $P$, we obtain a solution of $p_{\ell\minus 1}^1\circ f^1_\ell=f^0_\ell\circ p_\ell^1$. The proof for $p_{\ell\minus 1}^1\circ\vartheta_{\ell+1}=f^0_\ell\circ f^0_{\ell+1}$ is analogous. 
\end{proof}

\section{Conclusion} \label{sec:conclusion}

We have developed an algorithm that converts the simplicial data of a poset
tower into a projective implicit representation of its persistent homology.
The main difficulty is that the chain modules of a poset tower are not
necessarily projective, so the boundary maps cannot, in general, be
read directly as graded matrices. Our construction overcomes this by first
computing partial projective resolutions of the chain modules, then lifting the
boundary maps to these resolutions, and finally assembling the resulting
data into a PiRep using an additional correction term. In particular, 
the construction provides efficient algorithms for
computing minimal presentations of the chain modules $C_\ell(K)$ arising
from poset towers, as well as asymptotically minimal projective resolutions
of these modules up to the second term.

The central observation behind the algorithm is that the relations in the
chain modules are not arbitrary. They arise from identifications of simplex
generators and can be organized in relation graphs. This graph structure is
used repeatedly: to compute minimal presentations by avoiding redundant
cycle-closing relations, to compute relations among relations from cycles,
and to solve the linear systems needed for the lifted boundary maps and the
correction term. Thus, rather than applying general-purpose algebraic reduction to the chain
modules, the algorithm exploits the combinatorial structure inherited from
the poset tower, leading to a significant improvement in efficiency.

Several directions remain open. Although the algorithm is general for finite posets, indexing
posets arising in applications might have additional structure. We expect that 
the general algorithm developed here can serve as a starting point for more specialized algorithms
and for practical implementations of persistent homology over more general
indexing posets. 

Moreover, one may ask whether the local construction carried out here for a
fixed homological degree can be extended to a more global replacement of the
persistent chain complex $C_\bullet(K)$ by a chain complex of projective
modules. Such a construction is unlikely to admit an efficient
general-purpose solution for arbitrary finite posets. Nevertheless, for
special classes of indexing posets, it would be interesting to investigate
whether the combinatorial structure of poset towers can be exploited to
construct projective resolutions, or partial projective resolutions, of the
persistent chain complex in a systematic way.

\bibliography{lib}

@book{schiffler2014quiver,
  title={Quiver Representations},
  author={Schiffler, R.},
  isbn={9783319092041},
  series={CMS Books in Mathematics},
  url={https://books.google.at/books?id=0ctuBAAAQBAJ},
  year={2014},
  publisher={Springer International Publishing}
}

@book{CLRbook,
author = {Cormen, Thomas H. and Leiserson, Charles E. and Rivest, Ronald L. and Stein, Clifford},
title = {Introduction to Algorithms, Third Edition},
year = {2009},
isbn = {0262033844},
publisher = {The MIT Press},
edition = {3rd},
url={https://books.google.at/books?id=i-bUBQAAQBAJ}
}

@article{miller2020,
author = {Miller, Ezra},
title = {Homological Algebra of Modules over Posets},
journal = {SIAM Journal on Applied Algebra and Geometry},
volume = {9},
number = {3},
pages = {483-524},
year = {2025},
doi = {10.1137/22M1516361},
}

@article{BL23,
      title={An Introduction to Multiparameter Persistence}, 
      author={Magnus Bakke Botnan and Michael Lesnick},
      year={2023},
      journal = {Representations of Algebras and Related Structures},
      publisher ={EMS press},
      pages = {77-150},
      eprint={2203.14289},
      archivePrefix={arXiv},
      primaryClass={math.AT}
}

@article{brown2024discretemicrolocalmorsetheory,
title = {Discrete microlocal {Morse} theory},
journal = {Journal of Pure and Applied Algebra},
volume = {229},
number = {10},
pages = {108068},
year = {2025},
issn = {0022-4049},
doi = {https://doi.org/10.1016/j.jpaa.2025.108068},
author = {Adam Brown and Ondřej Draganov},
}

@article{edelsbrunner2002topological,
  title={Topological persistence and simplification},
  author={Edelsbrunner and Letscher and Zomorodian},
  journal={Discrete \& Computational Geometry},
  volume={28},
  pages={511--533},
  year={2002},
  publisher={Springer},
  doi={10.1007/s00454-002-2885-2}
}

@article{carlsson2010zigzag,
  title={Zigzag persistence},
  author={Carlsson, Gunnar and De Silva, Vin},
  journal={Foundations of computational mathematics},
  volume={10},
  pages={367--405},
  year={2010},
  publisher={Springer},
  doi={10.1007/s10208-010-9066-0}
}

@inproceedings{carlsson2007theory,
  title={The theory of multidimensional persistence},
  author={Carlsson, Gunnar and Zomorodian, Afra},
  booktitle={Proceedings 23rd Annual Symposium on Computational Geometry},
  pages={184--193},
  year={2007},
  doi={10.1145/1247069.1247105}
}

@inproceedings{dey2014computing,
  title={Computing topological persistence for simplicial maps},
  author={Dey, Tamal K and Fan, Fengtao and Wang, Yusu},
  booktitle={Proceedings 30th Annual Symposium on Computational Geometry},
  pages={345--354},
  year={2014},
  doi={10.1145/2582112.2582165}
}

@article{kim2023persistence,
  title={Generalized persistence diagrams for persistence modules over posets},
  author={Kim, Woojin and M{\'e}moli, Facundo},
  journal={Journal of Applied and Computational Topology},
  volume={5},
  number={4},
  pages={533--581},
  year={2021},
  publisher={Springer},
  doi={10.1007/s41468-021-00075-1}
}

@article{lensickwright,
author = {Lesnick, Michael and Wright, Matthew},
title = {Computing Minimal Presentations and Bigraded {Betti} Numbers of 2-Parameter Persistent Homology},
journal = {SIAM Journal on Applied Algebra and Geometry},
volume = {6},
number = {2},
pages = {267-298},
year = {2022},
doi = {10.1137/20M1388425},
}

@article{chacholski2017combinatorial,
  title={Combinatorial presentation of multidimensional persistent homology},
  author={Chach{\'o}lski, Wojciech and Scolamiero, Martina and Vaccarino, Francesco},
  journal={Journal of Pure and Applied Algebra},
  volume={221},
  number={5},
  pages={1055--1075},
  year={2017},
  publisher={Elsevier},
  doi={https://doi.org/10.1016/j.jpaa.2016.09.001}
}

@article{kerber2019barcodes,
  title={Barcodes of towers and a streaming algorithm for persistent homology},
  author={Kerber, Michael and Schreiber, Hannah},
  journal={Discrete \& Computational Geometry},
  volume={61},
  pages={852--879},
  year={2019},
  publisher={Springer},
  doi={10.1007/s00454-018-0030-0}
}

@inproceedings{zomorodian2004computing,
  title={Computing persistent homology},
  author={Zomorodian, Afra and Carlsson, Gunnar},
  booktitle={Proceedings 20th Annual Symposium on Computational Geometry},
  pages={347--356},
  year={2004},
  doi={10.1145/997817.997870}
}

@InProceedings{dey_et_al:LIPIcs.ESA.2022.43,
  author =	{Dey, Tamal K. and Hou, Tao},
  title =	{Fast Computation of Zigzag Persistence},
  booktitle =	{Proceedings 30th Annual European Symposium on Algorithms (ESA 2022)},
  pages =	{43:1--43:15},
  series =	{LIPIcs},
  ISBN =	{978-3-95977-247-1},
  ISSN =	{1868-8969},
  year =	{2022},
  volume =	{244},
  URN =		{urn:nbn:de:0030-drops-169813},
  doi =		{10.4230/LIPIcs.ESA.2022.43},
  annote =	{Keywords: zigzag persistence, persistent homology, fast computation}
}

@article{flammer,
  title={Spatiotemporal Persistence Landscapes},
  author={Flammer, Martina },
  journal={Young Researchers Forum (CG:YRF) 2024, Booklet of Abstracts},
  year={2024},
  URL={https://socg24.athenarc.gr/proceedings/YRF24%20booklet.pdf}
}

@InProceedings{dey_et_al:LIPIcs.SoCG.2024.51,
  author =	{Dey, Tamal K. and Russold, Florian and Samaga, Shreyas N.},
  title =	{Efficient Algorithms for Complexes of Persistence Modules with Applications},
  booktitle =	{Proceedings 40th International Symposium on Computational Geometry (SoCG 2024)},
  pages =	{51:1--51:18},
  series =	{LIPIcs},
  ISBN =	{978-3-95977-316-4},
  ISSN =	{1868-8969},
  year =	{2024},
  volume =	{293},
  URN =		{urn:nbn:de:0030-drops-199969},
  doi =		{10.4230/LIPIcs.SoCG.2024.51},
  annote =	{Keywords: Persistent (co)homology, Persistence modules, Sheaves, Presentations}
}

@inproceedings{Kerber2020FastMP,
  author       = {Michael Kerber and
                  Alexander Rolle},
  title        = {Fast Minimal Presentations of Bi-graded Persistence Modules},
  booktitle    = {Proceedings of the Symposium on Algorithm Engineering and Experiments,
                  {ALENEX}},
  pages        = {207--220},
  publisher    = {{SIAM}},
  year         = {2021},
  doi          = {10.1137/1.9781611976472.16}
}

@article{DS25,
  author       = {Tamal K. Dey and
                  Shreyas N. Samaga},
  title        = {Quasi zigzag Persistence: {A} topological framework for analyzing
                  time-varying data},
   journal = {Spotlight paper in TAG-DS},
  year         = {2025},
  eprinttype    = {arXiv},
  eprint       = {2502.16049},
  timestamp    = {Thu, 20 Mar 2025 13:28:45 +0100},
}

@inproceedings{FK19,
  author       = {Ulderico Fugacci and
                  Michael Kerber},
  title        = {Chunk Reduction for Multi-Parameter Persistent Homology},
  booktitle    = {35th International Symposium on Computational Geometry, SoCG 2019},
  series       = {LIPIcs},
  volume       = {129},
  pages        = {37:1--37:14},
  year         = {2019},
  doi          = {10.4230/LIPICS.SOCG.2019.37},
}

@misc{DONUT,
  author = {Giunti, Barbara and Lazovskis, J{\=a}nis and Rieck, Bastian},
  title  = {
    {DONUT}: {D}atabase of {O}riginal \& {N}on-{T}heoretical {U}ses of {T}opology
  },
  note   = {\url{https://donut.topology.rocks}},
  year   = {2022},
  key    = {DONUT},
}

@book{edelsbrunner2010computational,
  title={Computational Topology: An Introduction},
  author={Edelsbrunner, H. and Harer, J.},
  isbn={9780821849255},
  lccn={2009028121},
  series={Applied Mathematics},
  url={https://books.google.at/books?id=MDXa6gFRZuIC},
  year={2010},
  publisher={American Mathematical Society}
}

@book{dey2022computational,
  title={Computational Topology for Data Analysis},
  author={Dey, Tamal K. and Wang, Yusu},
  isbn={9781009098168},
  lccn={2021041859},
  series={Computational Topology for Data Analysis},
  url={https://books.google.at/books?id=PWtYEAAAQBAJ},
  year={2022},
  publisher={Cambridge University Press}
}

@inproceedings{DBLP:conf/compgeom/BauerLL23,
  author       = {Ulrich Bauer and
                  Fabian Lenzen and
                  Michael Lesnick},
  title        = {Efficient Two-Parameter Persistence Computation via Cohomology},
  booktitle    = {39th International Symposium on Computational Geometry, SocG 2023},
  series       = {LIPIcs},
  volume       = {258},
  pages        = {15:1--15:17},
  year         = {2023},
  doi          = {10.4230/LIPICS.SOCG.2023.15},
  timestamp    = {Sun, 06 Oct 2024 20:58:38 +0200},
  biburl       = {https://dblp.org/rec/conf/compgeom/BauerLL23.bib},
  bibsource    = {dblp computer science bibliography, https://dblp.org}
}

@inproceedings{morozov2024computingbettitablesminimal,
      title={Computing {Betti} tables and minimal presentations of zero-dimensional persistent homology}, 
      author={Dmitriy Morozov and Luis Scoccola},
      booktitle={Proceedings 41st International Symposium on Computational Geometry, SocG 2025 (to appear)},
      year={2025},
      note={\url{https://arxiv.org/abs/2410.22242}}, 
}

@misc{multi_pres,
title={Computing minimal presentations of multi-parameter persistent homology},
author={Matías Bender and Oliver Gäfvert and Michael Lesnick},
journal={(short abstract)},
note={\url{https://www.tugraz.at/projekte/cpw/abstracts}}
}

@book{assem2006elements,
  title={Elements of the Representation Theory of Associative Algebras: Volume 1: Techniques of Representation Theory},
  author={Assem, Ibrahim and Simson, Daniel and Skowro{\'n}ski, Andrzej},
  volume={65},
  series={London Mathematical Society Student Texts},
  year={2006},
  publisher={Cambridge University Press}
}

@article{nickel2010noncommutative,
  title={Non-commutative {Fitting} invariants and annihilation of class groups},
  author={Nickel, Andreas},
  journal={Journal of Algebra},
  volume={323},
  number={10},
  pages={2756--2778},
  year={2010},
  publisher={Elsevier}
}

@book{maclane,
  title="Categories for the Working Mathematician",
  author="Mac Lane, Saunders",
  isbn="9780387984032",
  lccn="97045229",
  series="Graduate Texts in Mathematics",
  year="1998",
  publisher="Springer New York"
}

@article{ROGNERUD2021107885,
title = {The bounded derived categories of the {Tamari} lattices are fractionally {Calabi-Yau}},
journal = {Advances in Mathematics},
volume = {389},
pages = {107885},
year = {2021},
doi = {https://doi.org/10.1016/j.aim.2021.107885},
author = {Baptiste Rognerud}
}

@book{Weibel_1994,
place={Cambridge},
series={Cambridge Studies in Advanced Mathematics},
title={An Introduction to Homological Algebra},
publisher={Cambridge University Press},
author={Weibel, Charles A.},
year={1994}, 
collection={Cambridge Studies in Advanced Mathematics}
}

@book{hatcher2002algebraic,
  title     = {Algebraic Topology},
  author    = {Hatcher, Allen},
  year      = {2002},
  publisher = {Cambridge University Press},
  address   = {Cambridge},
  isbn      = {9780521795401}
}
\bibliographystyle{plainurl}

\appendix

\section{Missing proofs from Section \ref{sec:background}} \label{app:proofs_sec_background}

\begin{proof}[Proof of Proposition \ref{prop:elementary_projective}]
For the first claim, let $\proj[x]\xrightarrow{\phi}\proj[y]$ be a morphism of $P$-persistence modules. 
\begin{equation} \label{eq:proj_proof}
\begin{tikzcd}
\proj[x](x) \arrow[r,"\phi(x)"] \arrow[d,swap,"\proj\!\text{[}x\text{]}(x\leq z)"] & \proj[y](x) \arrow[d,"\proj\!\text{[}y\text{]}(x\leq z)"] \\
\proj[x](z) \arrow[r,"\phi(z)"] & \proj[y](z)
\end{tikzcd}
\end{equation}
By Definition \ref{def:elementary_module}, if $\phi(x)$ in \eqref{eq:proj_proof} is non-zero, then $\proj[y](x)\cong \mathbb{F}_2$ and, thus, $y\leq x$. Moreover, by commutativity, we get that $\phi(z)$ is non-zero for all $z\geq x$. Conversely, if $\phi(x)$ is zero, then, again by commutativity, $\phi(z)$ has to be zero for all $z\geq x$. Hence, the morphism $\phi$ is completely determined by $\phi(x)$ and $\phi(x)$ can only be non-zero if $y\leq x$.

The second claim is a direct consequence of the first claim and the fact that in abelian categories $\Hom$ commutes with finite direct sums in both arguments (\cite{maclane}).
\end{proof}

\begin{proof}[Proof of Proposition \ref{prop:eliminate_constraints}]
Writing $AX=A(X_1,\ldots,X_m)=(AX_1,\ldots,AX_m)=(B_1,\ldots,B_m)=B$, the problem reduces to solving $m$ constrained linear systems of equations of the form $AX_j=B_j$ and $X_{kj}=0$ if $z_k\nleq y_j$. For each $1\leq j\leq m$, after reordering, we can write $A=\begin{pmatrix}A^{\leq y_j} & A^{\nleq y_j}\end{pmatrix}$ and $X_j=\begin{pmatrix}X_j^{\leq y_j} \\ X_j^{\nleq y_j} \end{pmatrix}$. Then the system becomes $AX_j=\begin{pmatrix}A^{\leq y_j} & A^{\nleq y_j}\end{pmatrix}\begin{pmatrix}X^{\leq y_j}_j \\ X_j^{\nleq y_j} \end{pmatrix}=A^{\leq y_j}X^{\leq y_j}_j+A^{\nleq y_j}X_j^{\nleq y_j}=B_j$ and $X_j^{\nleq y_j}=0$. Therefore, we can equivalently solve the unconstrained system $A^{\leq y_j}X^{\leq y_j}_j=B_j$ for all $1\leq j\leq m$.   
\end{proof}

\begin{proof}[Proof of Proposition \ref{prop:minimal_eqq_projective_cover}]
Let $Q_\bullet \rightarrow M$ be a projective resolution such that $q_i \colon Q_i \rightarrow \ker(q_{i\minus 1})$ is a projective cover for all $i\geq 0$. By Corollary 5.10 in \cite{assem2006elements}, for a $P$-persistence module, which corresponds to a module over the finite-dimensional incidence algebra of $P$, such a projective resolution always exists and is unique up to isomorphism. Moreover, by Proposition 2.1 in \cite{nickel2010noncommutative}, it is a direct summand of every projective resolution of $M$. Thus, if $U_\bullet \rightarrow M$ is any projective resolution, then $U_\bullet \cong Q_\bullet \oplus W_\bullet$ for some complex $W_\bullet$. In particular, for each $i \geq 0$, we have $U_i \cong Q_i \oplus W_i$. Since projective modules decompose into finite direct sums of indecomposable projectives, it follows that the multiplicity of each indecomposable projective in $Q_i$ is bounded above by its multiplicity in $U_i$. Hence, $Q_i$ has minimal indecomposable-projective multiplicities among all projective resolutions of $M$. Conversely, if $U_\bullet$ has minimal indecomposable-projective multiplicities in each degree, then from $U_i \cong Q_i \oplus W_i$ we get $W_i = 0$ for all $i \geq 0$. Therefore, $U_\bullet \cong Q_\bullet$.

Any projective presentation can be extended to a projective resolution and, by Corollary 5.10 in \cite{assem2006elements}, a projective presentation satisfying the projective cover condition can be extended to a projective resolution satisfying the projective cover condition. Therefore, the result for presentations directly follows from the above argument.
\end{proof}

\section{Missing proofs from Section \ref{sec:pirep_theory}} \label{app:proofs}

The following proofs are based on diagram chasing. Note that this is legitimate as persistence modules can be viewed as actual modules over the incidence algebra of the poset.

\begin{proof}[Proof of Proposition \ref{prop:kernel_quotient}]
Suppose $x\in \ker(\partial_\ell)$. Since $\alpha_\ell$ is an epimorphism, there exists $y\in G_\ell$ such that $\alpha_\ell(y)=x$. By commutativity, we get $\alpha_{\ell\minus 1}\circ f^0_\ell(y)=\partial_\ell\circ \alpha_\ell(y)=0$ and by exactness of the rows $f^0_\ell(y)\in \ker(\alpha_{\ell\minus 1})=\im(p_{\ell\minus 1}^1)$. Hence, there is a $z\in R_{\ell\minus 1}$ such that $f^0_\ell(y)=p^1_{\ell\minus 1}(z)$. Phrased differently, $(y,z)\in \ker\begin{pmatrix}f^0_\ell & p_{\ell\minus 1}^1\end{pmatrix}$ and $\begin{pmatrix}\alpha_\ell & 0\end{pmatrix}(y,z)=\alpha_\ell(y)=x$. Hence, $\begin{pmatrix}\alpha_\ell & 0\end{pmatrix}$ is an epimorphism. 

We have $\begin{pmatrix}\alpha_\ell & 0\end{pmatrix}\circ\begin{pmatrix}p_\ell^1 & 0 \\ f_\ell^1 & p_{\ell\minus 1}^2\end{pmatrix}=\begin{pmatrix}\alpha_\ell\circ p_\ell^1 & 0\end{pmatrix}=0$. So $\im\begin{pmatrix}p_\ell^1 & 0 \\ f^1_\ell & p_{\ell\minus 1}^2\end{pmatrix}\subseteq \ker\begin{pmatrix}\alpha_\ell & 0\end{pmatrix}$.

Let $(x,y)\in \ker\begin{pmatrix}f^0_\ell & p_{\ell\minus 1}^1\end{pmatrix}$ such that $\begin{pmatrix}\alpha_\ell & 0\end{pmatrix}(x,y)=\alpha_\ell(x)=0$. Then $x\in\ker(\alpha_\ell)=\im(p^1_\ell)$ and there exists $a\in R_\ell$ such that $p^1_\ell(a)=x$. Moreover, we have 
\begin{equation*}
\begin{aligned}
\begin{pmatrix}f^0_\ell & p_{\ell\minus 1}^1\end{pmatrix}(x,y)&=f^0_\ell(x)+p_{\ell\minus 1}^1(y)=f^0_\ell\circ p_\ell^1(a)+p^1_{\ell\minus 1}(y)\\&=p_{\ell\minus 1}^1\circ f^1_\ell(a)+p_{\ell\minus 1}^1(y)=p_{\ell\minus 1}^1\big(f^1_\ell(a)+y\big)=0
\end{aligned}
\end{equation*}
Hence, $f^1_\ell(a)+y\in\ker(p^1_{\ell\minus 1})=\im(p^2_{\ell\minus 1})$ and there exists $b\in RR_{\ell\minus 1}$ such that $y=f^1_\ell(a)+p^2_{\ell\minus 1}(b)$. We conclude that $\begin{pmatrix}p_\ell^1 & 0 \\ f^1_\ell & p_{\ell\minus 1}^2\end{pmatrix}(a,b)=(x,y)$ and $\im\begin{pmatrix}p_\ell^1 & 0 \\ f^1_\ell & p_{\ell\minus 1}^2\end{pmatrix}=\ker\begin{pmatrix}\alpha_\ell & 0\end{pmatrix}$.
\end{proof}

\vspace{5pt}

\begin{proof}[Proof of Proposition \ref{prop:homology_quotient}]
In Proposition \ref{prop:kernel_quotient}, we proved that $\begin{pmatrix}\alpha_\ell & 0\end{pmatrix}\colon \ker\begin{pmatrix}f^0_\ell & p_{\ell\minus 1}^1\end{pmatrix}\rightarrow \ker(\partial_\ell)$ is an epimorphism. Hence, $\pi\circ\begin{pmatrix}\alpha_\ell & 0\end{pmatrix}$ is an epimorphism. Since
\begin{equation*}
\begin{aligned}
\pi\circ\begin{pmatrix}\alpha_\ell & 0\end{pmatrix}\circ \begin{pmatrix}f^0_{\ell+1} & p^1_{\ell} & 0 \\ \vartheta_{\ell+1} & f^1_\ell & p^2_{\ell\minus 1}\end{pmatrix}&=\pi\circ \begin{pmatrix}\alpha_\ell\circ f^0_{\ell+1} & 0 & 0 \end{pmatrix}\\&=\begin{pmatrix}\pi\circ\partial_{\ell+1}\circ\alpha_{\ell+1} & 0 & 0 \end{pmatrix}=0
\end{aligned}
\end{equation*}
we have $\im\begin{pmatrix}f^0_{\ell+1} & p^1_{\ell} & 0 \\ \vartheta_{\ell+1} & f^1_\ell & p^2_{\ell\minus 1}\end{pmatrix}\subseteq\ker\!\big(\pi\circ\begin{pmatrix}\alpha_\ell & 0\end{pmatrix}\big)$.

Suppose $(x,y)\in \ker\begin{pmatrix}f^0_\ell & p_{\ell\minus 1}^1\end{pmatrix}$ such that $\pi\circ \begin{pmatrix}\alpha_\ell & 0\end{pmatrix}(x,y)=\pi(\alpha_\ell(x),0)=0$. Then there exists $z\in C_{\ell+1}$ such that $\partial_{\ell+1}(z)=\alpha_\ell(x)$. Moreover, since $\alpha_{\ell+1}$ is an epimorphism, there exists $a\in G_{\ell+1}$ such that $\alpha_{\ell+1}(a)=z$. Using commutativity, we obtain $\alpha_\ell\circ f^0_{\ell+1}(a)=\partial_{\ell+1}\circ \alpha_{\ell+1}(a)=\alpha_\ell(x)$. This implies $\alpha_\ell\big(f^0_{\ell+1}(a)+x\big)=0$, so $f^0_{\ell+1}(a)+x\in\ker\alpha_\ell=\im(p^1_\ell)$ and there exists $b\in R_\ell$ such that $x=f^0_{\ell+1}(a)+p^1_\ell(b)$. Since 
\begin{equation*}
\begin{aligned}
\begin{pmatrix}f^0_\ell & p_{\ell\minus 1}^1\end{pmatrix}(x,y)&=f^0_\ell(x)+p^1_{\ell\minus 1}(y)\\&=f^0_\ell\big(f^0_{\ell+1}(a)+p^1_\ell(b)\big)+p^1_{\ell\minus 1}(y)\\&=f^0_\ell\circ f^0_{\ell+1}(a)+f^0_\ell\circ p^1_\ell(b)+p^1_{\ell\minus 1}(y)\\&=p^1_{\ell\minus 1}\circ \vartheta_{\ell+1}(a)+p^1_{\ell\minus 1}\circ f^1_\ell(b)+p^1_{\ell\minus 1}(y)\\&=p^1_{\ell\minus 1}\big(\vartheta_{\ell+1}(a)+f^1_\ell(b)+y\big)=0
\end{aligned}
\end{equation*}
we obtain $\vartheta_{\ell+1}(a)+f^1_\ell(b)+y\in\ker(p^1_{\ell\minus 1})=\im(p^2_{\ell\minus 1})$ and there exists $c\in RR_{\ell\minus 1}$ such that $y=\vartheta_{\ell+1}(a)+f^1_{\ell}(b)+p^2_{\ell\minus 1}(c)$. We conclude that $\begin{pmatrix}f^0_{\ell+1} & p^1_{\ell} & 0 \\ \vartheta_{\ell+1} & f^1_\ell & p^2_{\ell\minus 1}\end{pmatrix}(a,b,c)=(x,y)$ and $\im\begin{pmatrix}f^0_{\ell+1} & p^1_{\ell} & 0 \\ \vartheta_{\ell+1} & f^1_\ell & p^2_{\ell\minus 1}\end{pmatrix}=\ker\!\big(\pi\circ\begin{pmatrix}\alpha_\ell & 0\end{pmatrix}\big)$.
\end{proof}

\section{TreeSolve algorithm} \label{app:tree_solve}

In this section, we discuss an algorithm {\sc TreeSolve} which efficiently solves linear systems of equations where the coefficient matrix has the structure of the boundary matrix of a tree. We will show that {\sc TreeSolve} solves such a linear system in linear time. It is based on the following definition of a leaf-order, which allows for an iterative substitution of the solution for edges connected to leaves in subgraphs. We call a vertex $v$ in a tree $T$ a leaf if it is incident to at most one edge $e$, denoted by $v<e$. Note that every leaf in a tree with more than one vertex has degree one.

\begin{definition}[Leaf-order] \label{def:leaf_order}
Let $T$ be a tree graph on $n$ vertices. We call an order $v_1,\ldots,v_n$ of the vertices of $T$ a \emph{leaf-order} if $v_i$ is a leaf of $T\setminus\{v_1,\ldots,v_{i\minus 1}\}$ for all $1\leq i\leq n$.  
\end{definition}

A leaf-order can be found by the following routine. 

\vspace{0.1in}
\noindent
{\bf Algorithm {\sc LeafOrder}} ($T=(V,E)$)
\begin{enumerate}
    \item Initialize: $\text{deg}\colon V\rightarrow \mathbb{N}_0$, with $\text{deg}(v)=0$ for all $v\in V$, a queue $Q=\emptyset$, and a list $L=\emptyset$;
    \item For all $e=(u,v)\in E$, set $\text{deg}(u)=\text{deg}(u)+1$ and $\text{deg}(v)=\text{deg}(v)+1$;
    \item For all $v\in V$, if $\text{deg}(v)\leq 1$ add $v$ to $Q$;
    \item While $Q\neq\emptyset$ do 
    \begin{enumerate}
        \item Take $v$ from $Q$ and set $\text{deg}(v)=0$;
        \item Add $v$ to $L$;
        \item Set $\text{deg}(u)=\max\big(\text{deg}(u)-1,0\big)$ for all $u$ adjacent to $v$ in $T$;
        \item If $\text{deg}(u)=1$, for $u$ adjacent to $v$, add $u$ to $Q$
    \end{enumerate}
\end{enumerate}

\begin{proposition}\label{prop:leaf_order}
For a tree $T=(V,E)$ on $n$ vertices, the algorithm {\sc LeafOrder} computes a leaf-order $L$ in $O(n)$ time.
\end{proposition}

\begin{proof}
Step $2$ of the algorithm obviously computes the degree $\text{deg}(v)$ of $v$ in the graph $T$ for all $v\in V$.
After Step 3 of the algorithm $Q$ contains all leaves of $T$. Since every tree has a leaf, $Q\neq\emptyset$. At this point, either $T$ was a single vertex and $Q$ contains this single vertex of degree zero, or $\text{deg}(v)=1$ for all $v\in Q$ and, since $T$ is connected, $\text{deg}(v)>1$ for all $v\in V\setminus Q$. The case where $T$ is a single vertex is obvious. From now on assume that $T$ has at least two vertices. Let $v_1,\ldots,v_n$ be the order stored in the list $L$. We now show by induction on the iterations of step $4$ that, before each iteration of step $4$, $Q$ contains all leaves of $T\setminus \{v_1,\ldots,v_{i\minus 1}\}$ and $\text{deg}(u)$ equals the degree of the vertex $u$ in $T\setminus \{v_1,\ldots,v_{i\minus 1}\}$. Before the first iteration of step $4$ this is true as argued above. Now suppose this is true before the $i$-th iteration of step $4$. In iteration $i$, we remove the leaf $v_i$ of $T\setminus \{v_1,\ldots,v_{i\minus 1}\}$ from $Q$, add it to $L$, set $\text{deg}(v_i)=0$, and decrease the degree of every neighbor of $v_i$ in $T\setminus \{v_1,\ldots,v_{i\minus 1}\}$ by one. Thus, $\text{deg}(u)$ now equals the degree of every vertex $u$ in $T\setminus \{v_1,\ldots,v_{i}\}$. Let $u$ be a leaf of $T\setminus \{v_1,\ldots,v_{i}\}$. If $u$ was not a leaf in $T\setminus \{v_1,\ldots,v_{i\minus 1}\}$, then its degree has been reduced to one in iteration $i$ and $u$ was added to $Q$. Otherwise, $u$ was already a leaf in $T\setminus \{v_1,\ldots,v_{i\minus 1}\}$ and, thus, by assumption, it was already in $Q$. This proves the claim. Since we add a leaf of $T\setminus \{v_1,\ldots,v_{i\minus 1}\}$ to $L$ in each iteration $L$ is a leaf-order of $T$.

To initialize and compute $\text{deg}$ of $T$, we need to process all vertices and edges of $T$ once. Since a tree has $n-1$ edges, this is clearly $O(n)$. In the third step, we process all vertices which is again $O(n)$. Since each vertex is added and removed from $Q$ exactly once there are $n$ iterations in step $4$. If $\Gamma(v)$ denotes the set of neighbors of $v$, then the update of neighbor degrees is $O(\sum_{v\in V}\vert\Gamma(v)\vert)=O(E)=O(n)$. Hence, the overall worst case complexity is $O(n)$. 
\end{proof}

The following algorithm {\sc TreeSolve} solves a linear system of equations $Ax=b$, where $A$ is the boundary matrix of a tree $T$, in linear time by simply substituting the solution using a leaf-order. Each column of $A$ and row of $x$ corresponds to an edge in $T$ and each row of $A$ and $b$ corresponds to a vertex in $T$. The algorithm takes as input the tree $T=(V,E)$ and the right-hand side $b$ as a map $b\colon V\rightarrow \mathbb{F}_2$ representing the entries of the rows of $b$.

\vspace{0.1in}
\noindent
{\bf Algorithm {\sc TreeSolve}} ($T=(V,E),b\colon V\rightarrow\mathbb{F}_2$)
\begin{enumerate}
    \item Call {\sc LeafOrder}($T$) to compute a leaf-order $v_1,\ldots,v_n$ of $T$; 
    \item Initialize $x\colon E\rightarrow \{\bot\}\cup\mathbb{F}_2$ by $x(e)=\bot$ for all $e\in E$;
    \item For $i\coloneqq 1$ to $n-1$ do
    \begin{enumerate}
        \item Find $e\in E$ such that $v_i<e$ and $x(e)=\bot$;
        \item Set $x(e)=b(v_i)+\underset{v_i<e'\wedge x(e')\neq \bot}{\sum}x(e')$ (in $\mathbb{F}_2$)
    \end{enumerate}
\end{enumerate}

\begin{theorem} \label{prop:tree_solve}
Let $Ax=b$ be a system of linear equations such that $b\in\im(A)$ and $A$ is the boundary matrix of a tree $T$ on $n$ vertices. The algorithm {\sc TreeSolve} computes a solution of $Ax=b$ in $O(n)$ time.
\end{theorem}

\begin{proof}
Let $v_1,\ldots,v_n$ be the leaf-order computed in step $1$. By Definition \ref{def:leaf_order} and Proposition \ref{prop:leaf_order}, $v_i$ is a leaf of the tree $T_i\coloneqq T\setminus \{v_1,\ldots,v_{i\minus 1}\}$, obtained by iteratively removing the leaves $v_1,\ldots,v_{i\minus 1}$ from $T$. For every $1\leq i\leq n-1$, the leaf $v_i$ is incident to a unique edge $e_i$ in $T_i$ and removing the leaf $v_i$ from $T_i$ removes exactly the one edge $e_i$. In step $2$, we set $x(e)\coloneqq\bot$ for all $e\in T$ meaning that the values on all edges are undetermined. Let $A_v$ be the row of $A$ corresponding to the vertex $v\in T$. Then $A_{v} x=\sum_{v<e\in E}x(e)$.

We now show by induction on $i$ that after each iteration of step $3$, the set of edges $E_{i+1}\coloneqq E(T_{i+1})=\{e\in E(T)\vert x(e)=\bot\}$ and $A_{v_i}x=b(v_i)$. For $i=1$, the vertex $v_1$ is a leaf in $T$, there is exactly one edge $e_1$ incident to $v_1$ in $T$ and $x(e)=\bot$ for all $e\in E$. The algorithm picks $e_1$ and sets $x(e_1)=b(v_1)$. Thus, we get $A_{v_1}x=b(v_1)$ and $x(e_1)\neq\bot$. Since $T_2$ is obtained from $T=T_1$ by removing the vertex $v_1$ and the edge $e_1$, we also get $E_2=\{e\in E(T)\vert x(e)=\bot\}$. Suppose the statement is true for all $i=1,\ldots,k-1$. By assumption, $E_k=\{e\in E(T)\vert x(e)=\bot\}$ before iteration $k$. Therefore, in iteration $k$, the algorithm picks the unique edge $e_k$ incident to the leaf $v_k$ in $T_k$. Every other edge $e'$ incident to $v_k$ in $T$ is not in $T_k$ anymore and thus, $x(e')\neq\bot$. After step $3b$, we get $A_{v_k}x=\underset{v_k<e'\in E}{\sum}x(e')=x(e_k)+\underset{v_k<e'\wedge x(e')\neq \bot}{\sum}x(e')=b(v_k)$ by definition of $x(e_k)$. Moreover, $x(e_k)\neq\bot$ after step $3b$ implies $E_{k+1}=\{e\in E(T)\vert x(e)=\bot\}$.  

We conclude that $A_{v_i}x=b(v_i)$ for all $1\leq i\leq n-1$. We now show that the final equation $A_{v_n}x=b(v_n)$ is also satisfied. Let $\overline{1}=\begin{pmatrix}1&\cdots&1\end{pmatrix}$ and $\overline{0}=\begin{pmatrix}0&\cdots&0\end{pmatrix}$. Since $A$ has exactly two non-zero entries per column, we get $\overline{1}A=\sum_{i=1}^n A_{v_i}=\overline{0}$. By assumption $b\in\im(A)$, thus, there exists a $y$ such that $Ay=b$, which implies $\overline{1}Ay=\overline{0}y=0=\overline{1}b$. In particular, $\sum_{i=1}^{n-1}b_{v_i}=b_{v_n}$. Therefore, we obtain $\overline{1}Ax=\sum_{i=1}^{n-1}A_{v_i}x+A_{v_n}x=\sum_{i=1}^{n-1}b_{v_i}+A_{v_n}x=b_{v_n}+A_{v_n}x=0$ and also the last equation is satisfied.

By Proposition \ref{prop:leaf_order}, step $1$ takes $O(n)$ time. Step $2$ obviously takes $O(n)$ time. Using an incidence list, checking the incident edges of all vertices and summing over all incident edges of all vertices is $O(\sum_{v\in V}\vert\Gamma(v)\vert)=O(E)=O(n)$. Therefore, in total, the worst-case complexity is $O(n)$. 
\end{proof}

\section{Computing presentations of homology from PiReps} \label{app:homology_presentation}

In this section, we discuss how to compute a presentation of the homology of a PiRep as in Definition \ref{def:pirep}. We consider a chain complex segment 
\begin{equation}\label{eq:segment_Q}
\begin{tikzcd}Q_{\minus 1} & Q_0 \arrow[l,swap,"q_0"] & Q_1 \arrow[l,swap,"q_1"] 
\end{tikzcd}
\end{equation}
where the $Q_i$ are projective and the maps $q_i$ are represented by $P$-graded matrices. We start by extending $Q_{\minus 1}\xleftarrow{q_0} Q_0$ to an exact sequence of projective modules:
\begin{equation}\label{eq:res_ker}
\begin{tikzcd}
Q_{\minus 1} & Q_0 \arrow[l,swap,"q_0"] & U_0 \arrow[l,swap,"u_0"] & U_1 \arrow[l,swap,"u_1"] .
\end{tikzcd}
\end{equation}
This can be done, for example, by using the algorithm\footnote{The algorithm {\sc MakeExact} in \cite{brown2024discretemicrolocalmorsetheory} is formulated in terms of injective resolutions but in our setting the projective case is completely dual.} {\sc MakeExact} in \cite{brown2024discretemicrolocalmorsetheory}.  Since, by exactness $\ker(q_0)=\im(u_0)$, $u_1$ is a presentation of $\ker(q_0)$. Since $\im(q_1)\subseteq \ker(q_0)$ we obtain the following diagram:
\begin{equation}\label{eq:ker_res_lift}
\begin{tikzcd}
& Q_1 \arrow[d,swap,"q_1"] \arrow[dr,dashed,"s"] \\
0 & \ker(q_0) \arrow[l] & U_0 \arrow[l,swap,"u_0"] & U_1 \arrow[l,swap,"u_1"]
\end{tikzcd}
\end{equation}
where $u_0$ is an epimorphism. Therefore, by the lifting property of projective modules, we get a lift $s$ of $q_1$ along $u_0$. This lift can be computed by solving a $P$-graded linear system, as in Definition \ref{def:P_graded_system}, using Proposition \ref{prop:eliminate_constraints}. We can interpret $U_0$ and $U_1$ as the generators and relations of the kernel and $Q_1$ as the initial relations on $Q_0$. We now have all the ingredients for a presentation of the homology. Let $\pi\colon \ker(q_0)\rightarrow \ker(q_0)/\im(q_1)$ denote the projection to the cokernel.

\begin{theorem} \label{prop:ker_res_lift}
The following exact sequence:
\begin{equation} \label{eq:final_presentation_from_pirep}
\begin{tikzcd}[ampersand replacement=\&,every label/.append style = {font = \small}]
0 \& \ker(q_0)/\im(q_1) \arrow[l] \&[15pt] U_0 \arrow{l}[swap]{\pi\circ u_0} \&[25pt] U_1\oplus Q_1 \arrow{l}[swap]{\begin{pmatrix}u_1 & s\end{pmatrix}}
\end{tikzcd}
\end{equation}
is a presentation of the homology of \eqref{eq:segment_Q}.
\end{theorem}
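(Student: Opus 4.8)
The plan is to check directly that the sequence \eqref{eq:final_presentation_from_pirep} is exact, i.e.\ that $\pi\circ u_0$ is an epimorphism and $\text{im}\begin{pmatrix}u_1 & s\end{pmatrix}=\text{ker}(\pi\circ u_0)$. Since $U_0$ is projective and $U_1\oplus Q_1$ is a finite direct sum of projectives, hence projective, this is precisely what is required for \eqref{eq:final_presentation_from_pirep} to be the first two terms of a projective resolution of $\text{ker }q_0/\text{im }q_1$, i.e.\ a presentation in the sense of Definition~\ref{def:projective_resolution}; such a truncated exact sequence always extends to a full projective resolution because our modules admit finite projective resolutions. As in the other proofs of this section, all the arguments are elementwise diagram chases, which are legitimate since $P$-persistence modules are modules over the incidence algebra of $P$.

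First I would observe that $\pi\circ u_0$ is an epimorphism: $u_0$ is surjective onto $\text{ker }q_0$ by exactness of \eqref{eq:res_ker}, and $\pi$ is surjective by definition of the cokernel, so the composite is surjective. Next, $\text{im}\begin{pmatrix}u_1 & s\end{pmatrix}\subseteq\text{ker}(\pi\circ u_0)$ because
\[
\pi\circ u_0\circ\begin{pmatrix}u_1 & s\end{pmatrix}=\pi\circ\begin{pmatrix}u_0\circ u_1 & u_0\circ s\end{pmatrix}=\pi\circ\begin{pmatrix}0 & q_1\end{pmatrix}=0,
\]
using $u_0\circ u_1=0$ from exactness of \eqref{eq:res_ker}, the lifting identity $u_0\circ s=q_1$ from \eqref{eq:ker_res_lift}, and $\pi\circ q_1=0$ since $\pi$ is the projection onto the cokernel of $q_1$ viewed as a map into $\text{ker }q_0$.

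For the reverse inclusion I would take, pointwise at a grade $x\in P$, an element $a\in U_0(x)$ with $\pi(u_0(a))=0$. Then $u_0(a)\in\text{im }q_1$, say $u_0(a)=q_1(b)$ for some $b\in Q_1(x)$; since $u_0\circ s=q_1$ this gives $u_0\big(a-s(b)\big)=0$, so $a-s(b)\in\text{ker }u_0=\text{im }u_1$ by exactness of \eqref{eq:res_ker}, hence $a-s(b)=u_1(c)$ for some $c\in U_1(x)$. Then $\begin{pmatrix}u_1 & s\end{pmatrix}(c,b)=u_1(c)+s(b)=a$, so $a\in\text{im}\begin{pmatrix}u_1 & s\end{pmatrix}$. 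This establishes $\text{ker}(\pi\circ u_0)\subseteq\text{im}\begin{pmatrix}u_1 & s\end{pmatrix}$ and completes the argument.

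I do not expect a genuine obstacle: the statement is a standard piece of homological algebra (computing a presentation of the homology of a short complex of projectives from a resolution of the relevant kernel together with a lift of the differential), and the only points requiring a little care are bookkeeping ones — namely that $\pi\circ q_1=0$ once $q_1$ is corestricted to $\text{ker }q_0$, that ``presentation'' here only demands exactness at the module and at the degree-$0$ term (no claim of minimality, and no exactness needed at the degree-$1$ term), and that all four modules appearing are projective so that the truncated sequence genuinely begins a projective resolution.
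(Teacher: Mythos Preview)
Your proposal is correct and follows essentially the same approach as the paper: both verify that $\pi\circ u_0$ is an epimorphism, check $\text{im}\begin{pmatrix}u_1 & s\end{pmatrix}\subseteq\text{ker}(\pi\circ u_0)$ by direct composition, and then prove the reverse inclusion by the identical diagram chase (find a preimage in $Q_1$ via $u_0\circ s=q_1$, then lift the remainder along $u_1$). Your added remarks on projectivity and on what ``presentation'' demands are accurate and make explicit points the paper leaves implicit.
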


\begin{proof}
Since $u_0$ and $\pi$ are epimorphisms, also $\pi\circ u_0$ is an epimorphism. 

Since $\pi\circ u_0\circ \begin{pmatrix}u_1 & s\end{pmatrix}=\begin{pmatrix}\pi\circ u_0\circ u_1 & \pi\circ u_0\circ s\end{pmatrix}=\begin{pmatrix}0 & \pi\circ q_1\end{pmatrix}=0$, we have $\im\begin{pmatrix}u_1 & s\end{pmatrix}\subseteq\ker(\pi\circ u_0)$.

Let $x\in \ker(\pi\circ u_0)$. Then $\pi\circ u_0(x)=0$ implies that there exists $y\in Q_1$ such that $u_0(x)=q_1(y)=u_0\circ s(y)$. Since $u_0\big(x+s(y)\big)=0$ and $x+s(y)\in\ker(u_0)=\im(u_1)$, there exists $z\in U_1$ such that $x=u_1(z)+s(y)$. We conclude that $\begin{pmatrix}u_1 & s\end{pmatrix}(z,y)=x$ and $\im\begin{pmatrix}u_1 & s\end{pmatrix}=\ker(\pi\circ u_0)$.
\end{proof}

The presentation of Theorem \ref{prop:ker_res_lift} is not necessarily minimal. In \cite{brown2024discretemicrolocalmorsetheory}, the authors also discuss methods to minimize resolutions that could be adapted to minimize the presentation \eqref{eq:final_presentation_from_pirep}.

\end{document}